\newlist{steps}{enumerate}{1}
\setlist[steps, 1]{label = \emph{Step} \arabic*:}
\theoremstyle{plain}
\newtheorem{theorem}{Theorem}[section]
\newtheorem{lemma}[theorem]{Lemma}
\newtheorem{example}{Example}
\newtheorem{remark}[theorem]{Remark}
\newtheorem{assumption}[theorem]{Assumption}
\newtheorem{proposition}[theorem]{Proposition}
\newtheorem{definition}[theorem]{Definition}
\theoremstyle{remark}
\numberwithin{equation}{section}
\begin{document}		
\title{}
\title{Common noise pullback attractors for stochastic dynamical systems.}
\author[]{Federico Graceffa}
\author[]{Jeroen S.W. Lamb}
\affil[]{Department of Mathematics, \\ Imperial College London, \\ 180 Queen's Gate, \\ London SW7 2AZ, \\ United Kingdom}
	
\maketitle


\begin{abstract}
We consider SDEs driven by two different sources of additive noise, which we refer to as  \emph{intrinsic} and \emph{common}. We establish almost sure existence and uniqueness of pullback attractors with respect to realisations of the common noise only. These \textit{common noise pullback attractors} are smooth probability densities 
that depend only on (the past of) a common noise realisation and to which
the pullback evolution of a corresponding stochastic Fokker-Planck equation converges.
Common noise pullback attractors have a natural motivation in the context of particle systems with intrinsic and common noise, describing the distribution of the system  
conditioned on (the past of) a common noise realisation. 
\end{abstract} 
  
\newpage
\tableofcontents

\section{Introduction and summary of the main results}\label{sec:introduction}

\subsection{Motivation}
In the theory of dynamical systems, broadly speaking, two dominant points of view may be distinguished: the topological point of view (understanding of the dynamics at the level of (typical) individual trajectories) and the probabilistic point of view (understanding of the dynamics at the level of average statistical properties, e.g. through Ergodic Theory) \cite{KatokHasselblatt1993}. Dynamical systems in the presence of noise (such as stochastic dynamical systems defined by SDEs) are predominantly approached from the latter point of view, with  powerful analytical techniques from stochastic analysis and Markov processes \cite{oksendal2003stochastic}. The alternative \textit{random dynamical systems} approach considers dynamical systems with noise as skew-product systems, where noise drives an otherwise deterministic dynamical system and the noise driving process admits a pathwise and probabilistic (in terms of ergodic theory) description. The latter allows a blend of the traditional topological and probabilistic approaches to achieve probabilistic results about the behaviour of trajectories of the (non-autonomous) noise-driven system. For instance, Arnold and co-workers~\cite{arnold1995random} have established the existence of random generalisations of attractors, as well as stable, unstable and centre manifolds.  

In this paper we develop a random dynamical systems point of view for SDEs with two distinguished sources of noise, which we refer to as \textit{intrinsic} and \textit{common} in view of the motivating example of a system with identical non-interacting particles (or agents) that are subject to intrinsic noise at the level of each particle and a common noise that is equal to all. 
Such settings naturally arise in a broad range of applications, for instance in genetics \cite{del2018role,elowitz2002stochastic,swain2002intrinsic,thomas2018population}, neuroscience \cite{abbott2011interactions,lang2010phase}, epidemics \cite{herrerias2017effects}, pattern formation \cite{sagues2007spatiotemporal} and financial mathematics \cite{giles2012stochastic}.

The aim of this paper is to study a stochastic dynamical system with intrinsic and common noise, conditioned on the past of the common noise realisation. In the context of the motivating particle system, this yields a description of the probability distribution of the particle system, the evolution of which is described by a stochastic Fokker-Planck equation, subject to the past of the realisation of the common noise.\footnote{Bressloff calls this the \emph{population} or \emph{SPDE perspective}, in  contrast with the \emph{particle perspective}, where one averages over both the intrinsic and common noise \cite{bressloff2016stochastic}.\label{Bfn}}  We establish the existence and uniqueness of a corresponding  \textit{common noise pullback attractor} for a specific class of SDEs where the intrinsic and common noises are additive Brownian motions. Our approach is not limited to this special class, but the main aim of this paper is to develop the concept of common noise pullback attractors in this specific transparent setting. 

Common noise pullback attractors facilitate the study of time-averaged properties of the distribution describing the particle system, through the application of ergodic theory, cf.~equation \eqref{Perg} in Section~\ref{subsec:common-noise-pullback-attractor}, for example observing the variance of the distribution as a measure of synchronisation. Synchronisation is a widely studied dynamical phenomenon in complex systems with ramifications in a wide range of applications \cite{pikovsky2003synchronization}. In addition, from a modelling perspective, our point of view is natural where the intrinsic noise is inherently or practically not observable, while the common noise can in principle be observed. Examples include the sentiment of traders in markets and voters under the influence of mass media, where the latter can be treated as a stochastic process or as a deterministic signal, leading to the consideration of dynamics with common noise or more general non-autonomous dynamics. In fact, the random dynamical systems approach taken in this paper in principle allows us to address both settings at the same time. 
  
The notion of pullback attractors is well-established in non-autonomous and random dynamical systems, see for instance~\cite{arnold1995random,kloeden2011nonautonomous} and \cite{bates2009random,caraballo2006pathwise,crauel1994attractors,flandoli1996random,lu2019wong,wang2011random,yan2017random} in the context of SPDEs. The analysis of pullback attractors in applications of complex nonlinear systems is gaining popularity in recent years, for instance in the context of climate science and turbulence \cite{drotos2017importance,ghil2008climate}. This paper is a further contribution in this direction. 

\subsection{Common noise pullback attractor}\label{subsec:common-noise-pullback-attractor}

We consider SDEs in $\mathbb{R}^d$ of the form
\begin{equation}\label{SDE}
d x(t)  = -\nabla V(x(t)) dt + \sigma d W(t) + \eta d B(t),
\end{equation}
where $V$ represents a smooth potential, $\sigma,\eta$ are positive definite matrices, $W$ is a Brownian motion and $B$ represents another source of noise, such as another Brownian motion or related process. 

We refer to $W$ as \emph{intrinsic} and $B$ as \emph{common} noise, motivated by the setting of a system of identical particles with states $x_i\in\mathbb{R}^d$, $i\in\{1,\ldots,N\}$, each subject to an 
intrinsic noise $W_i$ and an identical common noise $B$
\begin{equation}\label{SDEpart}
d x_i(t)  = -\nabla V(x_i(t)) dt + \sigma d W_i(t) + \eta d B(t).
\end{equation}
In the limit of large $N$, the evolution of this particle system is described by the evolution of a measure on $\mathbb{R}^d$. 
This measure has a Lebesgue density $p$ whose evolution is governed by the stochastic Fokker-Planck equation 
\begin{equation}\label{stochasticFP}
d p = \left[ \Delta V(x) p + \nabla V(x) \frac{\partial p}{\partial x} + \frac{1}{2} \sigma^2 \frac{\partial^2 p}{\partial x^2} \right] dt - \eta \frac{\partial p}{\partial x} \circ d B(t),
\end{equation}
where, as usual, $\circ$ refers to the Stratonovich convention of stochastic integration. 

Similar equations have been derived earlier by Giles and Reisinger \cite{giles2012stochastic} in the context of pricing baskets of financial derivatives, Bressloff \cite{bressloff2016stochastic} in the context of neuronal dynamics, Bain and Crisan \cite{bain2009continuous} in the context of stochastic filtering and Carmona and Delarue~\cite{CarmonaDelarue2018} in the context of mean field games.\footnote{Often, \eqref{stochasticFP} is written in It\^{o} form, resulting in an additional term $\eta^2$ added to $\sigma^2$ in the diffusion term.} 

We aim to employ \eqref{stochasticFP}, like Bresloff \cite{bressloff2016stochastic}, to study the evolution of population densities as a function of the common noise. 
Thereto, we approach the stochastic Fokker-Planck equation \eqref{stochasticFP} from a random dynamical systems point of view, considering the non-autonomous evolution of the density $p$ as a function of the realisation $\beta$  of the common noise $B$. In Section~\ref{sec:non_auto_FP_IVP} and in particular Theorem~\ref{thm:existence_uniqueness_regularity}, we provide a detailed discussion on the existence, uniqueness and regularity of solutions to the initial value problem of \eqref{stochasticFP} as a deterministic non-autonomous PDE, for a sufficiently regular common noise realisation. It is shown that the flow evolves initial conditions in $L^1$ to the Schwartz space of smooth rapidly decaying Lebesgue densities.
In Section~\ref{sec:from_nonauto_to_stoch}, we establish that the SPDE~\eqref{stochasticFP} is a random dynamical system in the sense that it admits a description in skew-product form with ergodic base dynamics generating the common noise $B(t)$, see Lemma~\ref{prop:SFPE_RDS}. 

Traditionally, dynamical systems theory focuses mostly on the long-term behaviour of solutions. In the non-autonomous setting, as it is rare to have convergence in forward time (since the equations of motion vary with time), it is natural to consider the asymptotic behaviour of \emph{pullback dynamics} instead, which has better prospects of convergence and reveals important aspects of the dynamics. Let $\Phi(t,\beta)$ represent the time-$t$ flow of \eqref{stochasticFP} with common noise realisation $\beta$. Instead of studying the behaviour of initial conditions with fixed noise realisation in the limit $t\to\infty$, pullback dynamics considers the behaviour of initial conditions of this flow, fixing the end-time, say at $t=0$, in the limit of the starting time $\tau\to - \infty$. A pullback attractor describes the state of the system, conditioned on the past of the time-dependent input (noise realisation). In the context of particle dynamics with intrinsic and common noise, the objective is to describe the distribution of the particle system with intrinsic noise, subject to the past realisation of the common noise. 

Under a natural assumption on the potential $V$ that guarantees the existence of a unique stationary density for
\eqref{SDE} in the absence of common noise, the main result of this paper is that the stochastic Fokker-Planck equation \eqref{stochasticFP} has a unique pullback attractor that is a random equilibrium, i.e. for almost all common noise realisations $\beta$, the limit
\begin{equation}\label{pbetadef}
p_{\beta}:=\lim_{\tau\to\infty}\Phi(\tau,\theta_{-\tau}\beta)p
\end{equation}
exists in the Schwartz space and is independent of the initial probability density $p\in L^1$, see Theorem~\ref{thm:pullback_attractor}.  
This result relies on the fact, obtained in Section~\ref{sec:contraction_non_auto_FP}, that for almost every noise realisation the non-autonomous evolution is a contraction. 
Moreover, the convergence is uniform in the common noise realisation. We refer to $p_\beta$ as the \textit{common noise} pullback attractor of the SDE \eqref{SDE}. It turns out  that $p_\beta$ is the density of the measure obtained by averaging, for a fixed common noise realisation $\beta$, the canonical $(\omega,\beta)$-dependent pullback measures of SDE \eqref{SDE} over all intrinsic noise relations $\omega$,  cf.~Proposition~\ref{prop:cnpbversusdisintegration}. 

From a random dynamical systems point of view,  $p_\beta$ in \eqref{pbetadef} is called a globally attracting random equilibrium of the stochastic Fokker-Planck equation \eqref{stochasticFP}.  This is the simplest type of attractor one may encounter in a random dynamical system. In general, random (pullback) attractors may display more complicated behaviour, cf.~\cite{crauel1994attractors}. 

Finally, by virtue of ergodicity we find (in Proposition~\ref{prop:erg}) that if $g$ is a continuous observable on the relevant solution space of densities for \eqref{stochasticFP},  $\mathbb{P}_B$-almost surely,
\begin{equation}\label{Perg}
\lim_{\tau\to\infty}\frac{1}{\tau}\int_0^\tau g(\Phi(\tau,\beta) p) dt=\mathbb{E}^{\mathbb{P}_B}[g(p_\cdot)],
\end{equation}
with $\mathbb{E}^{\mathbb{P}_B}$ denoting the expectation with respect to the probability measure $\mathbb{P}_B$ of the common noise. For special types of observables, the expectation \eqref{Perg} is related to an expectation with respect to the stationary measure $\rho$ of the SDE \eqref{SDE}. In particular, when the observable $g$ is a $p_\beta$-expectation of a continuous observable $h:\mathbb{R}^d\to \mathbb{R}$, i.e. $g(p_\beta)=\int_{\mathbb{R}^d} h(x)p_\beta(x)dx$, then
\begin{equation}\label{Pergstat}
\begin{split}
\mathbb{E}^{\mathbb{P}_B}[g(p_\cdot)]&:=\int_{\Omega_B}g(p_\beta)\mathbb{P}_B(d\beta)=
\int_{\Omega_B}\int_{\mathbb{R}^d} h(x)p_\beta(x)dx\mathbb{P}_B(d\beta)\\
&=
\int_{\mathbb{R}^d}h(x)  \int_{\Omega_B}p_\beta(x)\mathbb{P}_B(d\beta)dx=
\int_{\mathbb{R}^d} h(x) \rho(dx)=:\mathbb{E}^\rho[h]
\end{split}
\end{equation}
However, in general the expectation \eqref{Perg} is not expressible in terms of the stationary measure $\rho$ of \eqref{SDE}. For instance, the variance
\[
\mathrm{Var}(p_\beta):=\mathbb{E}^{p_\beta}[x^2]-\left(\mathbb{E}^{p_\beta}[x]\right)^2
\]
which is an indicator of synchronisation (of the particle system), is an observable whose $\mathbb{P}_B$-expectation cannot be deduced from $\rho$, cf.~the examples discussed in Section~\ref{subsec:intro-examples}.

\subsection{Examples}\label{subsec:intro-examples}

\subsubsection{Ornstein-Uhlenbeck process with intrinsic and common noise}\label{subsubsec:OU}
Our results are well-illustrated in the elementary example of an Orstein-Uhlenbeck process with intrinsic and common noise 
\begin{equation}\label{linSDE}
d x(t)  = -a x(t)dt + \sigma d W(t) + \eta d B(t),
\end{equation}
with $x\in\mathbb{R}$, $a>0$ and $W$ and $B$ are independent Brownian motions.\footnote{Most of the results here do not require $B$ to be a Brownian motion, cf.~also footnote~\ref{fn:gen-noise}.} Due to the linearity of \eqref{linSDE} the solution of \eqref{stochasticFP} with initial condition $\delta_{x(s)}$  and common noise realisation $\beta$ can be explicitly calculated
(for all $t>s$) to have the form\footnote{For details, see Appendix~\ref{app:explicit_disintegration}.}
\[
p(x,t) = \sqrt{\frac{a}{\pi \sigma^2 (1-e^{-2 a (t-s)})}} \exp \left( - \frac{a}{\sigma^2 (1-e^{-2 a (t-s)})} (x-m_\beta(t,s))^2 \right),
\]
where
\begin{equation*}
m_\beta(t,s) := x(s) e^{-a(t-s)} + \eta \int_s^t e^{-a (t-u)} d \beta(u),
\end{equation*}
and the latter integral is $\mathbb{P}_B$-almost surely finite.
Indeed, by Theorem~\ref{thm:pullback_attractor}, the unique common noise pullback attractor of \eqref{linSDE} is independent of the initial condition (in $L^1$, cf.~Section~\ref{sec:non_auto_FP_IVP}) and thus equals $\mathbb{P}_B$-almost surely the
normal distribution 
\begin{equation}\label{OUcnpb}
p_\beta(x)=\lim_{s\to-\infty}p(x,0)=
\sqrt{\frac{a}{\pi \sigma^2}} \exp \left(-\frac{a}{\sigma^2} \left(x-\eta \int_{-\infty}^0 e^{au} d\beta(u) \right)^2 \right).
\end{equation}

This example illustrates how the exact synchronisation of solutions of \eqref{linSDE} in the absence of intrinsic noise ($\sigma=0$) turns into an approximate synchronisation of the corresponding particle system in the presence of small intrinsic  noise ($\sigma\ll 1$), characterized by small $\mathrm{Var}(p_\beta)$. Namely, in the absence of intrinsic noise, $\mathbb{P}_B$-almost surely all pairs of initial conditions $x,y\in\mathbb{R}^d$ pathwise converge, i.e.~$x_\beta(t),~y_\beta(t)$ of \eqref{linSDE} with noise realisation $\beta$ satisfy $\lim_{t\to\infty} |x_\beta(t)-y_\beta(t)|=0$ \cite{crauel1998additive}, while in the presence of intrinsic noise the distribution converges to a normal distribution with variance $\mathrm{Var}(p_\beta)=\frac{\sigma^2}{2a}$.
The location of this normal distribution depends on (the past of) the common noise realisation $\beta$, i.e.~the mean $m(p_\beta)=m_\beta(0,-\infty)$ and is independent of the intrinsic noise strength $\sigma$. In view of \eqref{Perg}, this implies for the time-averages of the variance and mean of the (particle) distribution that, $\mathbb{P}_B$-almost surely, 
\[
\lim_{\tau\to\infty}\frac{1}{\tau}\int_0^\tau \mathrm{Var}(\Phi(\tau,\beta) p) dt=\frac{\sigma^2}{2a}~\mathrm{and}~
\lim_{\tau\to\infty}\frac{1}{\tau}\int_0^\tau m(\Phi(\tau,\beta) p) dt=\mathbb{E}^\rho(x)=0,
\]
where $\rho$ denotes the stationary measure of \eqref{linSDE}.

We contrast the average of the observed variance along trajectories of \eqref{stochasticFP} with the fact that the stationary density $p_\rho$ of \eqref{linSDE}, $p_\rho(x)=\int p_\beta(x) \mathbb{P}(d\beta)$, has a different variance. 
In particular,  
\[
p_\rho(x)= \sqrt{\frac{a}{\pi (\sigma^2 + \eta^2)}} \exp \left(-\frac{a}{(\eta^2 + \sigma^2)} x^2 \right)
\]
is a normal distribution with mean $0$ and variance $\frac{\sigma^2+\eta^2}{2a}$. 
Indeed, synchronisation of the particle system corresponds to localisation of the pullback measure, rather than to localisation of the stationary measure. If $\sigma$ is small and $\eta$ is large, the particle distribution is asymptotically strongly localized, while the stationary distribution is not.

We note that the Ornstein-Uhlenbeck example \eqref{linSDE} is very special, in particular the fact that the shape of the density $p_\beta$ does not depend on the noise realisation $\beta$. This is a consequence of the linearity of this example, which also yields it exactly solvable. 

\subsubsection{Bi-stable dynamics with intrinsic and common noise}\label{subsubsec:double_well}
We next consider the less degenerate, nonlinear, example of \eqref{SDE} with $x\in\mathbb{R}$ and 
$V(x)=\frac{1}{4}x^4-\frac{a}{2}x^2$ is a double-well potential
\begin{equation}\label{dwSDE}
d x(t) = x(t) (a-x(t)^2) dt + \sigma d W(t) + \eta d B(t),
\end{equation}
with $a>0$ and $\sigma,\eta$ constants as above and . $W(t),~B(t)$ are Brownian motions. In this case, the stationary probability density of
the SDE \eqref{dwSDE} admits the explicit expression
\begin{equation}\label{dwSDEstatm}
p_\rho(x) = \frac{1}{N} \exp \left(-\frac{2}{\sigma^2 + \eta^2} \left(\frac{x^4}{4} - a \frac{x^2}{2} \right)\right),
\end{equation}
where $N:=\int \exp \left(-\frac{2}{\sigma^2 + \eta^2} \left(\frac{x^4}{4} - a \frac{x^2}{2} \right)\right)dx$ is a normalization constant

An important difference with \eqref{linSDE} is that \eqref{dwSDE} is nonlinear. To our best knowledge, in this case, the common noise pullback attractor $p_\beta$ of \eqref{dwSDE} does not admit a comprehensive analytical expression, but it can be approximated numerically (for instance, by means of Monte Carlo methods, cf.~\cite{kloeden1992numerical}). 
In Figure~\ref{fig:non_uniform_convergence} some numerically obtained examples of densities for common noise pullback attractors are presented, illustrating how the stationary density (depicted in the background in grey) may differ substantially from the densities of individual pullback attractors $p_\beta$ which depend on the common noise realisation $\beta$. This figure illustrates some of the limitations in dynamical information that a stationary measure of a stochastic dynamical system provides.

\begin{figure}
	\hspace*{-2.1cm}
	\includegraphics[scale=.50]{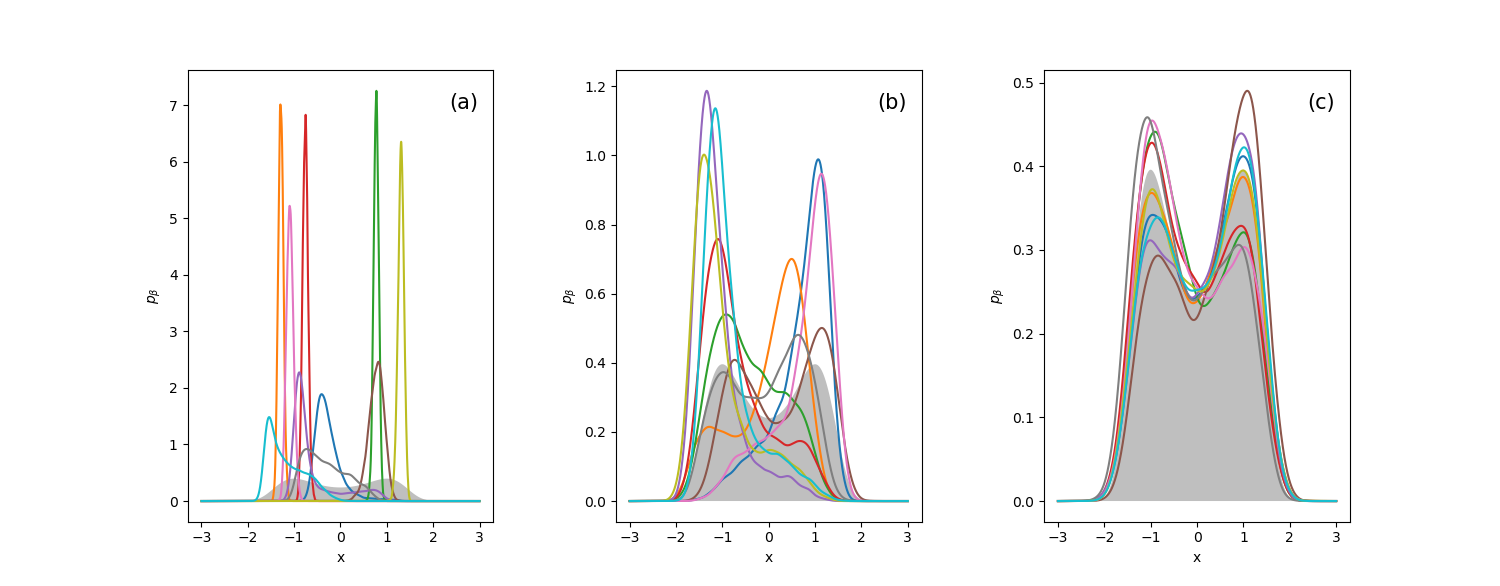}
	\caption{Densities of common noise pullback attractors $p_\beta$ of the SDE with double well potential with intrinsic and common additive noise \eqref{dwSDE}, with $a = 1, \sigma^2 + \eta^2 = 1$ and
(a) $\sigma \ll \eta=0.99$ , (b) $\sigma=\eta=\frac{1}{2}\sqrt{2}$ and (c) $\sigma \gg \eta=0.15$. Pullback attractors for different common noise realisations $\beta$ are represented by graphs with different colors. The stationary density $p_\rho$ \eqref{dwSDEstatm} of the SDE \eqref{dwSDE}, plotted in grey in the background, is identical in all three cases.  Different scales on the $p_\beta$-axis have been chosen so as to achieve similar resolutions in the graphs of the densities of the common noise pullback attractors. When common noise dominates intrinsic noise, $\sigma\ll\eta$ (a), one predominantly observes localized pullback attractors, corresponding to approximate synchronisation. When intrinsic noise dominates common noise, $\sigma\gg\eta$ (c), the densities of the common noise pullback attractors tend to be less localized and relatively close to the stationary density. To obtain an objective quantification of the degree of synchronisation, we have numerically approximated the time-averaged variance of (particle) distributions, as $\mathbb{E}^{\mathbb{P}_B}\left[\mathrm{Var}(p_\cdot)\right]$ by virtue of \eqref{Perg}, yielding the values  (a) 0.04, (b) 0.53 and (c) 0.90 (the latter being close to $\mathrm{Var}(p_\rho)$), in accordance with the perceived degrees of localisation in the density graphs.  
	}
	\label{fig:non_uniform_convergence}
\end{figure}
There are various important questions concerning common noise pullback attractors that we have not addressed here, but which deserve further attention. For instance, it would be of interest to determine the support of the stationary measure of \eqref{stochasticFP}, i.e.~the range of possible densities of common noise pullback attractors, in particular also as a function of system parameters. In the exactly solvable Ornstein-Uhlenbeck example of Section~\ref{subsubsec:OU}, the range consists of a one-parameter family of normal distributions, with identical variance depending on the strength of the intrinsic noise. In the double-well example of Section~\ref{subsubsec:double_well}, Figure~\ref{fig:non_uniform_convergence} suggests that the range is also limited but with a more complicated dependence on system parameters.


\section{The non-autonomous Fokker Planck equation and its initial value problem}\label{sec:non_auto_FP_IVP}
In this section we consider the derivation and analysis of equation \eqref{stochasticFP}, as a non-autonomous Fokker-Planck equation. This forms the basis of our discussion  of \eqref{stochasticFP} in Section~\ref{sec:from_nonauto_to_stoch} in the stochastic setting, as a random dynamical system. 

In Section~\ref{subsec:derivation_non_auto_FP}, we discuss the derivation of the non-autonomous Fokker-Planck equation \eqref{stochasticFP} from two points of view: as the Fokker-Planck equation for a non-autonomous SDE and from a particle system approximation, which motivates the choice of terminology \emph{common noise}. In Section~\ref{subsec:exist_uniq_reg} we establish the existence and uniqueness of the solution for the non-autonomous Fokker-Planck equation~\eqref{stochasticFP} within a suitable setting and discuss how this solution smoothens when $t>0$, given an initial condition $q_0 \in L^1(\mathbb{R}^d)$ at $t=0$. The techniques employed are in principle deterministic and well-established, see eg \cite{clement1987one,evans1997partial,triebel1977interpolation}, but as our specific non-autonomous setting is not normally addressed, we present a self-contained technical discussion in Appendix~\ref{app:non_auto_FP_IVP_proof}. The choice of initial conditions in $L^1$ (rather than in $L^2$, as is commonly found in the literature) is to cater for natural densities relevant to the particle system interpretation, such as Dirac's delta, representing a system with all particles in the same initial state. It turns out that initial conditions in $L^1$ evolve immediately into $L^2$. 

\subsection{Derivation of the non-autonomous Fokker-Planck equation}\label{subsec:derivation_non_auto_FP}

\subsubsection{The Fokker-Planck equation of the non-autonomous SDE} 
Let us consider \eqref{SDE} as a non-autonomous SDE where $B(t) = \beta(t)$ is deterministic.  We let $\beta\in C^{1/2}(\mathbb{R}, \mathbb{R}^d)$, defined as the set of functions from $\mathbb{R}$ to $\mathbb{R}^d$ that are everywhere locally $\alpha$-H{\"o}lder continuous for any $\alpha<\frac{1}{2}$.\footnote{The choice of this regularity is motivated by the fact that pathwise realisations of the Brownian motion possess this regularity almost surely, see e.g. \cite{karatzas2014brownian}}. Writing $y(t):=x(t) - \eta \beta(t)$, the SDE~\eqref{SDE} can be written as
\begin{equation}\label{eq:nonauto-SDE}
d y(t) = - \nabla V(y(t)+\eta \beta(t)) dt + \sigma d W(t).
\end{equation}
With $U(y,t;\beta):=V(y(t)+\eta \beta(t))$, we find 
\begin{equation*}
\nabla U(y,t;\beta) =  \nabla V(y+\eta \beta(t)),~~
\Delta U(y,t;\beta) = \Delta V (y+\eta \beta(t))
\end{equation*}
where $\nabla$ and $\Delta$ denote the gradient and Laplacian with respect to the first argument.
The Fokker-Planck equation describing the annealed evolution of Lebesgue probability densities $q$  associated with the SDE~\eqref{eq:nonauto-SDE}  is given by \cite{pavliotis2014stochastic}
\begin{equation}
\partial_t q =  \nabla (\nabla U(y,t;\beta) q) + \frac{1}{2}\sigma^2\Delta q.
\label{eq:non autonomous FP gradient}
\end{equation}
Transforming variables $y$ back to $x$ in \eqref{eq:non autonomous FP gradient}, with densities 
$p(x):=q(y)$, yields \eqref{stochasticFP}.

\subsubsection{The Fokker-Planck equation of the non-autonomous particle system}\label{sec:partmot} 

The Fokker-Planck equation~\eqref{stochasticFP} can also be motivated directly from a particle system point of view, following for instance \cite{kurtz1999particle}. Let us consider a system of particles $x_i$, $i=1,\ldots, N$ satisfying \eqref{SDEpart}
\begin{equation*}
d x_i(t) = - \nabla V(x_i(t)) dt + \sigma d W_i(t) + \eta d \beta(t),
\end{equation*}
with $W_i$ independent Brownian motions representing the intrinsic noise and $\beta\in C^{1/2}(\mathbb{R}, \mathbb{R}^d)$ a deterministic common driving. Let $\varphi \in C_b^2(\mathbb{R}^d)$ be an observable which is a bounded function of compact support with bounded first and second derivative. Its evolution is given by
\begin{equation}\label{eq:diff_or_not_diff}
\begin{split}
\varphi(x_i(t)) & = \varphi(x_i(0)) + \int_0^t  \left(- \nabla^T \varphi(x_i(s)) \nabla V(x_i(s)) + \frac{1}{2} \sigma^2 \Delta \varphi (x_i(s)) \right)ds \\
& \quad + \int_0^t \sigma \nabla^T \varphi(x_i(s)) d W_i(s) + \int_0^t \eta \nabla^T \varphi(x_i(s))  d \beta(s).
\end{split}
\end{equation}
It is crucial to recognise that the stochastic integral with respect to the intrinsic noise $W_i$ represents a distribution, while the other integrals yield scalars. The empirical measure for a particle distribution may be defined, as usual, as
\begin{equation*}
\nu(t) := \lim_{N \to +\infty} \frac{1}{N} \sum_{i=1}^N \delta_{x_i(t)},
\end{equation*}
where $\delta_x$ denotes the Dirac measure at $x$ and we consider convergence in the weak topology of the collection of all finite signed Borel probability measures on $\mathbb{R}^d$.\footnote{The Glivenko-Cantelli theorem \cite{varadarajan1958convergence} provides more detail on the convergence in the weak topology of the empirical measure. See Boissard and Le Gouic \cite{boissard2014mean}, for a discussion of the convergence of the empirical measure with respect to the Wasserstein distance.} Integrating both sides of \eqref{eq:diff_or_not_diff} with respect to this empirical measure, denoting $\nu_t(\varphi) :=\int \varphi d\nu_t$, yields the SPDE
\begin{equation}\label{preSPDE}
\nu_t(\varphi) = \nu_0(\varphi) + \int_0^t \nu_s (A_1 \varphi) ds + \int_0^t \nu_s(A_2 \varphi) d \beta(s),
\end{equation}
where $A_1,A_2$ are the differential operators
\begin{align*}
A_1 \varphi & := \frac{1}{2} \sigma^2 \Delta \varphi  - \nabla^T \varphi \nabla V\\
A_2 \varphi & := \eta \nabla^T \varphi.
\end{align*}
In particular, the term containing the stochastic integral vanishes, see \cite[Proof of Theorem 3.1]{kurtz1999particle}. Following \cite[Chapter 7.3]{bain2009continuous}, assuming that the empirical measure $\nu$ has a sufficiently smooth Lebesgue density $p(t)$, one may reformulate \eqref{preSPDE} as
\begin{equation*}
\begin{split}
\nu_t(\varphi) & = \int_{\mathbb{R}} \varphi(x) p(t,x) dx \\
& = \int_{\mathbb{R}} \varphi(x) \left( p(0,x) + \int_0^t A_1^* p(s,x) ds + \int_0^t A_2^* p(s,x) d \beta(s) \right) dx,
\end{split}
\end{equation*}
where $A_1^*,A_2^*$ are given by
\begin{align*}
A_1^* \psi & = \frac{1}{2} \sigma^2 \Delta \psi + \nabla (\nabla V \psi) = \frac{1}{2} \sigma^2 \Delta \psi + \Delta V \psi + \nabla \psi \nabla V \\
A_2^* \psi & = - \eta \nabla^T \psi.
\end{align*}
We thus obtain
\begin{equation*}
p(t,x) = p(0,x) + \int_0^t A_1^* p(s,x) ds + \int_0^t A_2^* p(s,x) d \beta(s),
\end{equation*}
which is the integral form of~\eqref{stochasticFP}.

\subsection{The initial value problem: existence, uniqueness and regularity}\label{subsec:exist_uniq_reg}

We denote by $C_c^{\infty}(\mathbb{R}^d)$ the space of all smooth and compactly supported functions on $\mathbb{R}^d$ and  $$\langle q, \varphi \rangle:=\int_{\mathbb{R}^d} \varphi  qdx.$$ We further denote by $\mathcal{P}(\mathbb{R}^d)$ the space of all Borel probability measures on $\mathbb{R}^d$. Our notion of a weak solution is as follows:
\begin{definition}[Weak $L^1$-probability solution]
A function $q \in C([0,T]; L^1(\mathbb{R}^d); C^{1/2}(\mathbb{R}, \mathbb{R}^d))$ is a weak $L^1$-probability solution of the initial value problem for the non-autonomous Fokker-Planck equation \eqref{eq:non autonomous FP gradient} if $q$ solves this equation with initial condition $q_0 \in \mathcal{P}(\mathbb{R}^d)$ and $\beta \in C^{1/2}(\mathbb{R}, \mathbb{R}^d)$, such that $q(t) \in \mathcal{P}(\mathbb{R}^d)$ for all $t>0$ and $\langle q(t), \varphi \rangle \to \langle q_0, \varphi \rangle$ as $t \to 0$ for all $\varphi \in C_c^{\infty}(\mathbb{R}^d)$.
\end{definition} 
 
Our main result establishes the existence and uniqueness of probability solutions of the $L^1$ initial value problem and the fact that such solutions are smooth and their derivatives of any order are rapidly decreasing after any finite time, i.e. they belong to the Schwartz space
\begin{equation*}
\mathcal{S} := \mathcal{S}(\mathbb{R}^d) := \left\{f \in C^{\infty}(\mathbb{R}^d): \forall\ n,m \in \mathbb{N}^d, \|f\|_{n,m} < \infty \right\},
\end{equation*}
where $C^{\infty}(\mathbb{R}^d)$ denotes the space of infinitely differentiable functions on $\mathbb{R}^d$ and \footnote{Here we use the multi-index notation, as in Evans~\cite{evans1998partial}.}
\begin{equation}
\|f\|_{n,m} := \sup_{x \in \mathbb{R}^d} |x^n (D^m f)(x)|.\label{seminorm}
\end{equation}

\begin{theorem}[Existence, uniqueness and regularity]\label{thm:existence_uniqueness_regularity}
Let $q_0 \in \mathcal{P}(\mathbb{R}^d)$ and $\beta \in C^{1/2}(\mathbb{R},\mathbb{R}^d)$. Let us assume the potential $U$ is $C^\infty$ in $x$ and satisfies the dissipation condition
\begin{equation}
\nabla U(x,t;\beta) \cdot x \|x\|^2 \geq \frac{1}{2} \|x\|^6 - C |\beta|^6
\label{eq:dissipation_assumption}
\end{equation}
for some constant $C>0$. Then, the non-autonomous Fokker-Planck equation \eqref{eq:non autonomous FP gradient} with initial condition $q(0)=q_0$ admits a unique weak probability solution $q(t) \in \mathcal{P}(\mathbb{R}^d)$ for all $t>0$ which is everywhere locally $\alpha$-H{\"o}lder continuous in time for any $\alpha<\frac{1}{2}$, such that $q(\cdot,t;\beta) \in \mathcal{S}$ for $t>0$.
\end{theorem}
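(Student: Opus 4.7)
The plan is to split the argument into three parts: existence via approximation, uniqueness via duality, and upgrading regularity from $L^1$ to the Schwartz space $\mathcal{S}$. For \emph{existence}, I would first mollify the driving path, replacing $\beta$ by smooth approximations $\beta_n \to \beta$ locally uniformly, and regularise $q_0$ by convolution to obtain smooth compactly supported initial data $q_0^n$. For each $n$ the equation has smooth coefficients in $x,t$, and classical parabolic theory for linear non-autonomous operators (e.g.\ via the Pazy/Tanabe evolution-system construction) yields a unique classical solution $q_n$. The dissipation assumption is then used to obtain polynomial-moment bounds uniform in $n$: testing the equation against $\varphi(x) = \|x\|^{2k}$ and integrating against $q_n$ gives, after integration by parts,
\begin{equation*}
\frac{d}{dt}\int \|x\|^{2k} q_n\, dx \leq -c_k \int \|x\|^{2k+4} q_n\, dx + C_k\bigl(1+|\beta(t)|^6\bigr),
\end{equation*}
which via Gronwall controls every polynomial moment on finite time intervals. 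Combined with conservation of total mass (integrate the equation against $1$), this yields tightness of $\{q_n(t)\}$; a standard compactness argument then produces a weak probability solution $q$ with continuous trajectories in $L^1$.

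For \emph{uniqueness} I would use duality. Given two weak $L^1$-probability solutions $q_1,q_2$ with the same initial datum, their difference $w = q_1 - q_2$ solves the same linear non-autonomous equation with zero initial condition and zero total mass. Testing against the solution $\psi(s,\cdot)$ of the dual backward Kolmogorov equation on $[0,t]$ with terminal datum $\varphi \in C_c^{\infty}(\mathbb{R}^d)$ gives $\langle w(t),\varphi\rangle = 0$; since such $\varphi$ are dense in the relevant dual class, $w \equiv 0$. The backward equation has $C^\infty$ coefficients in $x$ and Holder-in-$t$ coefficients through $\beta$, which is enough to solve it classically by freezing $\beta$ and approximating.

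For the \emph{regularity} upgrade, parabolic smoothing for the generator $\tfrac12 \sigma^2 \Delta + \nabla\!\cdot\!(\nabla U(\cdot,t;\beta)\,\cdot)$ instantly pushes $L^1$ data into $L^p$ for every $p$ and then into $C^{\infty}$ for any $t>0$. To control the Schwartz seminorms $\|q(t)\|_{n,m}$ I would iterate weighted $L^2$ estimates: differentiating the equation in $x$ and pairing with $x^{2n} D^m q$ yields a coupled hierarchy of inequalities for $\int x^{2n}|D^m q|^2 dx$, which closes against the dissipation-controlled moment bounds; Sobolev embedding then transfers these $L^2$ bounds to $L^\infty$, giving $q(t)\in\mathcal{S}$. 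Local Holder continuity in time is read off the PDE directly by estimating $\langle q(t)-q(s),\varphi\rangle$ for $\varphi\in C_c^\infty$: the drift and Laplacian terms contribute an $O(|t-s|)$ increment, while the common-noise term contributes $O(|\beta(t)-\beta(s)|)$, so $q$ inherits the $\alpha<\tfrac12$ Holder regularity of $\beta$.

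The main obstacle, in my view, is propagating weighted smoothness uniformly in the regularisation parameter $n$ while retaining the Holder-only regularity of $\beta$ in the coefficients. The sixth-power form of the dissipation assumption, with $\|x\|^6$ dominating $|\beta|^6$, is tailored precisely so that the common-noise contribution is absorbed in each weighted estimate; however, verifying this systematically at every level of the derivative-moment hierarchy, and ensuring that constants remain bounded as $n\to\infty$ and as $t\downarrow 0$ (where $q_0$ may be as singular as a Dirac mass), is the technically delicate heart of the argument.
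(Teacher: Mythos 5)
Your overall three-part structure (existence by approximation, uniqueness, then bootstrapping to Schwartz regularity) matches the paper's architecture, and the existence and regularity segments are close in spirit to what is done there (approximation, moment/weighted estimates, smoothing, Sobolev embedding). The genuine divergence, and the place where there is a gap, is uniqueness.

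The paper does \emph{not} use duality. It first shows (Lemma~\ref{lem:probability_solution}) that conservation of mass for a probability solution is \emph{equivalent} to a weighted integrability condition on the tails,
\begin{equation*}
\lim_{N \to \infty} \int_0^T \int_{N<\|x\|<2N} \|x\|^{-1}\,\|\nabla U\|\,\|q\|\,dx\,dt = 0,
\end{equation*}
and it then establishes, via a mollification and $\mathrm{sgn}$-approximation argument, the $L^1$-contraction $\|q(t)\|_{L^1}\le\|q(0)\|_{L^1}$. Uniqueness is an immediate corollary by applying this to the difference of two solutions. The reason the paper goes through the weighted integrability condition rather than a clean duality argument is that \emph{general $L^1$ (or bounded signed-measure) solutions of \eqref{eq:non autonomous FP gradient} are not unique}; the paper exhibits an explicit stationary, bounded, sign-changing counterexample. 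Uniqueness is restored only within the class of solutions that conserve total mass, which is exactly what the weighted integrability condition encodes.

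Your duality proposal as written does not engage with this. The step ``testing against the solution $\psi$ of the dual backward Kolmogorov equation on $[0,t]$ with terminal datum $\varphi\in C_c^\infty$ gives $\langle w(t),\varphi\rangle = 0$'' requires the duality pairing
\begin{equation*}
\langle w(t),\varphi\rangle - \langle w(0),\psi(0)\rangle = \int_0^t \langle w(s),\,(\partial_s + L^*_s)\psi(s)\rangle\,ds
\end{equation*}
to hold, and that in turn requires (i) growth control on $\psi$ compatible with the polynomially growing drift $\nabla U$, and (ii) the vanishing at infinity of the boundary terms produced by the integrations by parts. Point (ii) is precisely where mass could escape to or arrive from infinity, and it is exactly where the weighted integrability condition (or, equivalently, the probability constraint $\int q\,dt \equiv 1$) must be invoked. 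Simply observing that $w$ has zero total mass does not rule out the counterexample class --- the bad solution in the paper's remark also integrates to zero against constants but fails the weighted integrability condition. So either the duality argument has to be refined to restrict to the class of probability (mass-conserving) solutions and to justify the integration by parts using a tail estimate of the same type, or one should adopt the paper's $L^1$-contraction route, which builds that control in from the start. As stated, your proof would ``prove'' uniqueness in a class where it is false.

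Two smaller remarks. First, your Holder-in-time argument reads off $O(|\beta(t)-\beta(s)|)$ directly from the common-noise term, but the equation \eqref{eq:non autonomous FP gradient} is written in the transformed variable where the Holder dependence enters only through the smooth potential $U(\cdot,t;\beta)=V(\cdot+\eta\beta(t))$; the conclusion is the same, but the mechanism is through the coefficients rather than a separate additive term. Second, for the Schwartz-seminorm bootstrap the paper works with weighted $L^1$ norms $\|(1+x^n)q\|_{L^1}$ and $L^1$-localization estimates before upgrading to $H^1$ and then iterating; your weighted $L^2$ hierarchy is a reasonable alternative, but the short-time blow-up of the constants (starting from a Dirac) is controlled in the paper by multiplying by $t^N$ and interpolating, which is a concrete device you would need to replicate.
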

We defer the proof of this result to Appendix~\ref{app:non_auto_FP_IVP_proof}.

It turns out that, $L^1$ solutions are not unique in general. In Appendix~\ref{app:non_auto_FP_IVP_proof}, it is shown that uniqueness is ensured by a weighted integrability condition, expressing the fact that no mass can comes from infinity, nor disappears through infinity, in finite time. It turns out (Lemma~\ref{lem:probability_solution}) that this weighted integrability condition is equivalent to the conservation of mass, hence ensuring existence and uniqueness in the context of probability solutions. In view of the particle systems motivation of Section~\ref{sec:partmot}, this setting is natural as it concerns the conservation of particles.

\begin{remark}
Note that the dissipation condition \eqref{eq:dissipation_assumption} also ensures the existence of a unique stationary solution of~\eqref{SDE}. This condition is fulfilled, for example, in the case of the double well potential $-\nabla V(x) = x (a-\|x\|^2)$, cf. the example discussed in Section~\ref{subsubsec:double_well}. 
\end{remark}


\section{Contraction property of the non-autonomous Fokker-Planck equation}\label{sec:contraction_non_auto_FP}
In this section we establish that the time-$t$ evolution operator $\Phi(t,\beta)$ of the non-autonomous Fokker-Planck equation~\eqref{eq:non autonomous FP gradient} is a contraction for all $t>0$ and $\beta \in C^{1/2}(\mathbb{R},\mathbb{R}^d)$. Technical proofs are deferred to Appendix~\ref{app:contraction_non_auto_FP_proof}. 

In the autonomous setting ($\eta=0$), if $V$ is strictly convex, $\Phi$ is known to be a contraction for all $t>0$ if the metric on the solution space is chosen to be the usual Wasserstein distance $W^p$ for any $p \geq 1$ \cite{bolley2012convergence}. Moreover, strict convexity of $V$ is also a necessary condition \cite{von2005transport}. 
Under the milder assumption that the potential $V$ is strictly convex outside a given ball in $\mathbb{R}^d$, with is a less restrictive and more realistic condition, Eberle \cite{eberle2016reflection} established contractivity of the evolution (again in the autonomous setting) for an appropriately chosen Kantorovich-Rubinstein metric. 
In this section, we adapt the results from \cite{eberle2016reflection} for autonomous Fokker-Planck equations to the non-autonomous setting.

Let us consider again~\eqref{SDE} as a non-autonomous SDE where $B(t) = \beta(t)$ at all times $t$ for some $\beta \in C^{1/2}(\mathbb{R},\mathbb{R}^d)$,
\begin{equation}
d x(t) = - \nabla V(x(t)) dt + \sigma d W(t) + \eta d \beta(t),
\label{eq:non-auto gradient SDE}
\end{equation}
where $W$ is a $d$-dimensional Brownian motion, $\sigma,\eta \in \mathbb{R}^{d \times d}$ are constant matrices with positive determinants and the potential $V$ satisfies the same assumptions as in Theorem~\ref{thm:existence_uniqueness_regularity}. We denote by $\mu_{t,\beta}$ and $\nu_{t,\beta}$ the time-$t$ evolved probability measures with respect to a given input $\beta$ and initial conditions $\mu,\nu$ respectively. In other words, with $p_\mu$ denoting the Lebesgue density of $\mu$, we have
\begin{equation}
\mu_{t,\beta}(A):=\int_A\Phi(t,\beta)p_\mu(x) dx,~\forall A \in\mathscr{B}(\mathbb{R}^d),~t\geq 0,
\label{eq:time_t_evolved_prob}
\end{equation}
and similarly for $\nu_{t,\beta}$. We employ a reflection coupling method to determine a bound for the distance between $\mu_{t,\beta}$ and $\nu_{t,\beta}$ with respect to some appropriately chosen metric. This method entails the introduction of an additional auxiliary process $y(t)$ such that $x(t) = y(t)$ for $t \geq T$, for some $T$, adapting \cite[Eqs. (2)-(3)]{eberle2016reflection} to the non-autonomous setting:
\begin{equation*}
	\begin{cases}
		d y(t) = - \nabla V(y(t)) dt + \sigma (I - 2 e(t) e^\top(t)) d W(t) + \eta d \beta(t), & \text{for}\ t<T \\
		y(t) = x(t), & \text{for}\ t \geq T
	\end{cases}
\end{equation*}
where $T:= \inf \left\{t \geq 0: x(t) = y(t) \right\}$ is the coupling time and $e e^\top$ is the orthogonal projection onto the unit vector
\begin{equation*}
	e(t) := \frac{\sigma^{-1} (x(t) - y(t))}{|\sigma^{-1} (x(t) - y(t))|}.
\end{equation*}
The general aim is to construct a function $f$ such that the process $e^{ct} f(|x(t) - y(t)|)$ is a (local) supermartingale for $t<T$, with a constant $c>0$ that is maximized by choosing $f$ appropriately. This ensures uniform, exponential contraction with respect to a Kantorovich-Rubinstein metric $\mathcal{W}_f$. 

\begin{definition}[Kantorovich-Rubinstein distance]\label{def:non-autonomous Kantorovich metric}
	Let $f\in C^2([0,\infty))$ be concave and increasing with  $f(0)=0, f'(0)=1$. The $\mathcal{W}_{f}$-distance between two Borel probability measures $\mu,\nu \in \mathcal{P}(\mathbb{R}^d)$ is defined by
	\begin{equation*}
	\mathcal{W}_{f}(\mu,\nu)  := \inf_{\gamma} \mathbb{E}^\gamma [d_f(x,y)] = \inf_{\gamma} \int d_f(x,y) \gamma(dx dy) 
	\end{equation*}
	with $\mathbb{R}^d$ distance
	$d_f(x,y):= f(\|x-y\|)$, where  $\|\cdot\|$ is a norm on $\mathbb{R}^d$, and the infimum is taken over all couplings $\gamma$ of $\mu$ and $\nu$.\footnote{Recall that a Borel measure $\gamma$ on $X\times X$ is called a coupling of Borel measures $\mu$ and $\nu$ on $X$ if $\gamma(A\times X)=\mu(A)$ and $\gamma(X\times B)=\nu(B)$ for all $A,B\in\mathscr{B}(X)$.}  
\end{definition}
In this paper, we always choose the norm to be $\|\cdot\|=|\sigma^{-1} \cdot |$, with $|\cdot|$ denoting the Euclidean norm in $\mathbb{R}^d$ and $\sigma$ the nondegenerate diffusion matrix from \eqref{eq:non-auto gradient SDE}.

We adapt \cite[Theorem 1 and Corollary 2]{eberle2016reflection} to obtain:

\begin{proposition}[Kantorovich-Rubinstein contraction]\label{prop: exp_contraction_kantorovich}
Consider the non-autonomous stochastic differential equation \eqref{eq:non-auto gradient SDE} and the setting of  Theorem~\ref{thm:existence_uniqueness_regularity}. Let $\mu_{t,\beta},\nu_{t,\beta}$ be time-$t$ evolved probability measures, as defined in~\eqref{eq:time_t_evolved_prob}. Then, there exist a constant $c>0$ and a convex and increasing function $f$ such that for any $\beta \in C^{1/2}(\mathbb{R},\mathbb{R}^d)$, $t>0$ and initial conditions $\mu,\nu \in \mathcal{P}(\mathbb{R}^d)$,
\begin{equation*}
\mathcal{W}_{f}(\mu_{t,\beta},\nu_{t,\beta}) \leq e^{-c t} \mathcal{W}_{f}(\mu,\nu).
\end{equation*}
\end{proposition}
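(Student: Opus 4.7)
The plan is to reduce the non-autonomous contraction to the autonomous one by exploiting the crucial observation that the common noise term $\eta\, d\beta(t)$ cancels in the difference process of the reflection coupling. The remaining analysis then parallels Eberle's construction of $f$ and $c$.

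First I would set up the synchronous reflection coupling already described in the excerpt, with $x(t)$ solving \eqref{eq:non-auto gradient SDE} and $y(t)$ solving the reflected SDE driven by $(I-2e(t)e^{\top}(t))\sigma\, dW(t)$ but with the \emph{same} common noise increment $\eta\, d\beta(t)$. Setting $z(t):=x(t)-y(t)$, the common noise term cancels, leaving
\begin{equation*}
dz(t) = -\bigl[\nabla V(x(t))-\nabla V(y(t))\bigr]\,dt + 2\sigma\, e(t)e^{\top}(t)\,dW(t),\qquad t<T,
\end{equation*}
which has exactly the same form as in the autonomous case of~\cite{eberle2016reflection}. In particular, the process $r_t := \|z(t)\| = |\sigma^{-1}(x(t)-y(t))|$ evolves according to a one-dimensional Itô equation whose coefficients do not involve $\beta$, up to the drift $\langle e(t),\sigma^{-1}(\nabla V(x(t))-\nabla V(y(t)))\rangle$, which depends only on $V$ and the pair $(x,y)$.

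Second, I would invoke Eberle's construction of the profile function $f$: following \cite[Sec.~2]{eberle2016reflection}, one picks a concave increasing $f\in C^2([0,\infty))$ with $f(0)=0$ and $f'(0)=1$ that solves an appropriate ODE on $[0,R]$ (where $R$ is chosen so that $V$ is strictly convex outside the ball of radius $R$, a property that follows from the dissipation condition~\eqref{eq:dissipation_assumption}) and that is extended linearly beyond $R$. This $f$ is designed so that applying Itô's formula to $e^{ct}f(r_t)$ produces a non-positive drift for some explicit $c>0$, making $e^{ct}f(r_t\wedge T)$ a supermartingale:
\begin{equation*}
\mathbb{E}\bigl[e^{ct}f(r_t)\bigr] \leq f(r_0) = f(\|x(0)-y(0)\|).
\end{equation*}
Because this construction depends only on $V$ and $\sigma$, the constants $c$ and $f$ can be chosen independently of $\beta$, which is essential for the claim that the contraction holds uniformly in the common noise realisation.

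Third, I would translate this pathwise supermartingale estimate into the Kantorovich-Rubinstein bound. Given $\mu,\nu\in\mathcal{P}(\mathbb{R}^d)$ and $\varepsilon>0$, choose a coupling $\gamma_0$ of $\mu,\nu$ that is $\varepsilon$-optimal for $\mathcal{W}_f$, use $\gamma_0$ as the joint law of $(x(0),y(0))$, and let $(x(t),y(t))$ evolve by the reflection coupling above. The law of $(x(t),y(t))$ is then a coupling of $\mu_{t,\beta}$ and $\nu_{t,\beta}$, so by definition of $\mathcal{W}_f$
\begin{equation*}
\mathcal{W}_f(\mu_{t,\beta},\nu_{t,\beta}) \leq \mathbb{E}\bigl[f(\|x(t)-y(t)\|)\bigr] \leq e^{-ct}\,\mathbb{E}^{\gamma_0}[f(\|x-y\|)] \leq e^{-ct}\bigl(\mathcal{W}_f(\mu,\nu)+\varepsilon\bigr),
\end{equation*}
and letting $\varepsilon\to 0$ completes the argument. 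Measurable selection of an optimal (or near-optimal) coupling proceeds as in the autonomous case and needs no modification.

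The hard part is verifying that Eberle's construction of $f$ and $c$ survives unchanged in the non-autonomous setting; this hinges on the fact that after the cancellation above, every quantity entering the ODE for $f$ depends only on $V$ and $\sigma$, not on $\beta$. A minor subtlety, worth flagging but technically routine, is handling the stopping time $T$ (the coupling time) to justify the passage from the local supermartingale property to the integrated estimate, for which a standard localisation argument together with integrability of $f(r_t)$ under the dissipation assumption~\eqref{eq:dissipation_assumption} suffices.
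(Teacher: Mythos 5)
Your proposal is correct and follows essentially the same approach as the paper: both reduce the non-autonomous statement to Eberle's autonomous reflection-coupling construction by observing that the common-noise increment $\eta\, d\beta(t)$ cancels in the difference process $z(t)=x(t)-y(t)$, so that $f$ and $c$ depend only on $V$ and $\sigma$ and not on $\beta$. The paper simply spells out Eberle's construction of $f$ (via $k$, $\varphi$, $g$, and the radii $R_0,R_1$, using Lemma~\ref{lem:dissipation_condition_strict_convexity} to get strict convexity of $V$ outside a ball from the dissipation condition) in full detail, whereas you delegate this to the reference; the substance is identical.
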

Note that the function $f$ in this proposition can be determined constructively. It turns out that convergence in the chosen Kantorovich-Rubinstein metric implies convergence in $L^1$.

\begin{proposition}[Convergence in $L^1$]\label{prop:convergence_Wf_L1}
Consider the non-autonomous stochastic differential equation \eqref{eq:non-auto gradient SDE} and the setting of  Theorem~\ref{thm:existence_uniqueness_regularity}. Let $\mu_{t,\beta}$ be the time-$t$ evolved probability measure, as defined in~\eqref{eq:time_t_evolved_prob}. Assume that $\beta \in C^{1/2}(\mathbb{R},\mathbb{R}^d)$ is such that $(\mu_{t,\beta})_{t>0}$ is a Cauchy sequence with respect to $\mathcal{W}_f$. Then, the sequence $(p_{t,\beta})_{t>0}$ of the associated Lebesgue densities converges in $L^1$.
\end{proposition}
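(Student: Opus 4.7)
The plan is to convert the $\mathcal{W}_f$-Cauchy property into an $L^1$-Cauchy property for the densities by introducing a mollification scale. For a standard compactly supported mollifier $\zeta_\delta$ on $\mathbb{R}^d$ of width $\delta>0$, I decompose
\begin{equation*}
\|p_{t,\beta}-p_{s,\beta}\|_{L^1} \leq \|p_{t,\beta}-p_{t,\beta}*\zeta_\delta\|_{L^1} + \|(p_{t,\beta}-p_{s,\beta})*\zeta_\delta\|_{L^1} + \|p_{s,\beta}*\zeta_\delta-p_{s,\beta}\|_{L^1},
\end{equation*}
bounding the middle term via the $\mathcal{W}_f$-Cauchy hypothesis and the two outer ``mollification-error'' terms via a uniform-in-$t$ regularity estimate for the densities afforded by the Fokker-Planck smoothing.

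For the middle term, $L^1$-$L^\infty$ duality together with Fubini gives
\begin{equation*}
\|(p_{t,\beta}-p_{s,\beta})*\zeta_\delta\|_{L^1} = \sup_{\|\varphi\|_\infty\leq 1}\int (\varphi*\tilde\zeta_\delta)\,d(\mu_{t,\beta}-\mu_{s,\beta}),
\end{equation*}
with $\tilde\zeta_\delta(z):=\zeta_\delta(-z)$. The test function $\psi_\varphi:=\varphi*\tilde\zeta_\delta$ is uniformly bounded by $1$ and globally Lipschitz with constant $\leq\|\nabla\zeta_\delta\|_{L^1}$. Since $f$ is concave with $f(0)=0$ and $f'(0)=1$, and (by Eberle's construction underlying Proposition~\ref{prop: exp_contraction_kantorovich}) satisfies $f(r)\geq\alpha\min(r,1)$ for some $\alpha>0$, the metric $d_f$ is equivalent to the Euclidean one on bounded sets, so the $d_f$-Lipschitz constant of $\psi_\varphi$ is bounded by some finite $C(\delta)$. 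Kantorovich-Rubinstein duality for $\mathcal{W}_f$ then yields the middle term $\leq C(\delta)\,\mathcal{W}_f(\mu_{t,\beta},\mu_{s,\beta})\to 0$ as $t,s\to\infty$.

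For the outer terms I need a uniform bound $\|\nabla p_{t,\beta}\|_{L^1}\leq M$ for $t\geq t_0>0$, which yields $\|p_{t,\beta}-p_{t,\beta}*\zeta_\delta\|_{L^1}\leq M\delta$ by the usual translation-difference estimate. To obtain this, write $p_{t,\beta}(y)=\int P_\beta(t-\epsilon,x;t,y)\,p_{t-\epsilon,\beta}(x)\,dx$, where $P_\beta$ denotes the transition density of the non-autonomous SDE~\eqref{eq:non-auto gradient SDE}, and differentiate in $y$ under the integral: short-time Gaussian upper bounds on $|\nabla_y P_\beta|$ (valid by ellipticity of $\sigma$ and smoothness of $V$) contribute a derivative factor of order $1/\sqrt{\epsilon}$, which together with $\|p_{t-\epsilon,\beta}\|_{L^1}=1$ gives $\|\nabla p_{t,\beta}\|_{L^1}\leq C/\sqrt{\epsilon}$ uniformly in $t\geq\epsilon$. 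Combining the three bounds, for any $\gamma>0$ one first picks $\delta$ small enough that the two outer terms sum to less than $\gamma/2$, and then $N$ large enough that $C(\delta)\mathcal{W}_f(\mu_{t,\beta},\mu_{s,\beta})<\gamma/2$ for $t,s\geq N$, concluding that $(p_{t,\beta})_{t>0}$ is Cauchy, hence convergent, in $L^1$.

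The main obstacle is the uniform short-time heat-kernel derivative estimate for the non-autonomous SDE, since $\nabla V$ is not assumed globally Lipschitz. The dissipation condition~\eqref{eq:dissipation_assumption} together with ellipticity of $\sigma$ nevertheless provide enough control for a non-autonomous adaptation of the classical parabolic regularity theory (of the kind used in Appendix~\ref{app:non_auto_FP_IVP_proof} for Theorem~\ref{thm:existence_uniqueness_regularity}) to yield the required Gaussian derivative bound.
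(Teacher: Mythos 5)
Your proposal is correct but takes a genuinely different route from the paper. The paper's argument is two steps: first, since the construction in Proposition~\ref{prop: exp_contraction_kantorovich} gives $f' \in [\varphi(R_0)/2,\,1]$, the metric $d_f$ is two-sidedly comparable to the norm $\|\cdot\|$, so the $\mathcal{W}_f$-Cauchy hypothesis transfers immediately to $\mathcal{W}^1$; second, it invokes a Hardy--Landau--Littlewood type inequality, $\|p-q\|_{L^1}^2 \leq C\,\|\nabla(p-q)\|_{L^1}\,\mathcal{W}^1(\mu,\nu)$, together with a uniform bound on $\|\nabla p_{t,\beta}\|_{L^1}$ (furnished by the smoothing estimate \eqref{second_smoothing_estimate} combined with the weighted $L^1$-localization \eqref{L1_localization} and the bootstrap in Appendix~\ref{app:non_auto_FP_IVP_proof}) to conclude $\|p_{t+\tau,\beta}-p_{t,\beta}\|_{L^1}^2 \leq \bar C\,\mathcal{W}^1(\mu_{t,\beta},\mu_{t+\tau,\beta}) \to 0$. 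Your mollification-and-duality decomposition is a more elementary surrogate for the Hardy--Landau--Littlewood step and works; note, incidentally, that the lower bound $f(r)\geq\alpha\min(r,1)$ you posit understates what the construction actually delivers, namely $f(r)\geq\tfrac{\varphi(R_0)}{2}\,r$ globally, so the $d_f$-Lipschitz constant of $\psi_\varphi$ can be bounded purely by its Euclidean Lipschitz constant, without invoking the $\|\psi_\varphi\|_\infty\leq 1$ cap. The common crux of both arguments is the uniform $L^1$-gradient bound, and you correctly flag it. The one weak point in your proposal is the route you suggest to that bound: short-time Gaussian derivative estimates for the non-autonomous transition kernel are genuinely delicate here because the dissipation condition \eqref{eq:dissipation_assumption} makes $\nabla V$ grow cubically, so it is not globally Lipschitz and standard parabolic heat-kernel theory does not apply off the shelf. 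The paper's route --- quoting the energy estimates already proved in Appendix~\ref{app:non_auto_FP_IVP_proof}, in particular Lemma~\ref{lem:second_smoothing_estimates} --- obtains the needed gradient control directly from the PDE without any transition-density analysis and is the cleaner choice in this setting.
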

At this point it is important to note that forward convergence, as obtained in Proposition~\ref{prop: exp_contraction_kantorovich}, does not necessariy imply pullback convergence. 
While the contraction property ensures that all solutions approach each other as time progresses forwards,  in order to guarantee pullback convergence additional conditions (on $\beta$) must be satisfied. For instance, boundedness of  $\beta\in C^{1/2}(\mathbb{R},\mathbb{R}^d)$ would suffice. In Section~\ref{sec:from_nonauto_to_stoch} we consider the stochastic setting of \eqref{stochasticFP} and obtain in Theorem~\ref{thm:pullback_attractor} pullback convergence for $\mathbb{P}_B$-almost all Brownian paths $\beta$. 

\section{The stochastic Fokker-Planck equation as a random dynamical system}\label{sec:from_nonauto_to_stoch}
In Sections~\ref{sec:non_auto_FP_IVP} and~\ref{sec:contraction_non_auto_FP}, we have considered the Fokker Planck equation~\eqref{stochasticFP} as a non-autonomous PDE. In this Section we consider the stochastic setting with $B(t)$ a Brownian motion and show that the resulting stochastic Fokker-Planck equation \eqref{stochasticFP} is a random dynamical system.\footnote{Our results extend naturally to other common noise processes $B(t)$; for instance, those described by an SDE of the form $d B(t) = f(B(t)) dt + \eta d \tilde{W}(t)$ for some $f \in C^1$ and Brownian motion $\tilde{W}(t)$, such as the Ornstein-Uhlenbeck process.\label{fn:gen-noise}} 
We establish almost sure pullback attractors, using the contractivity obtained in the non-autonomous analysis in Section 
\ref{sec:contraction_non_auto_FP}, noting that sample paths of the Brownian motion $B(t)$ are $\mathbb{P}_B$-almost surely in 
$C^{1/2}(\mathbb{R},\mathbb{R}^d)$
\cite{karatzas2014brownian}.

The main results of this section concern the fact that \eqref{stochasticFP} is a random dynamical system (Proposition~\ref{prop:SFPE_RDS}) which (almost surely) possesses a unique pullback attractor (Theorem~\ref{thm:pullback_attractor}) and the correspondence between the common noise pullback attractor of \eqref{stochasticFP} and a partial disintegration of the invariant Markov measure of \eqref{SDE} (Proposition~\ref{prop:cnpbversusdisintegration}). Technical proofs are deferred to Appendix~\ref{app:from_nonauto_to_stoch_proof}. We first recall briefly some preliminaries from the random dynamical system approach towards stochastic differential equations, involving the sample path space of Brownian motions \cite[Chapters 1,2 and Appendix A]{arnold1995random}. 

Let $(\Omega,\mathscr{F},\mathbb{P})$ be a probability space and $X$ be a metric space with Borel $\sigma$-algebra $\mathscr{B}(X)$. We consider the situation with two-sided continuous time $t\in\mathbb{R}$. A \emph{random dynamical system} on $X$ consists of two components. The first is a \emph{metric dynamical system} modelling the noise. This is a $(\mathscr{B}(\mathbb{R}) \otimes \mathscr{F}, \mathscr{F})$-measurable function $\theta:\mathbb{R} \times \Omega \to \Omega$ such that 
\begin{enumerate}[label=(\roman*)]
	\setlength{\itemsep}{0pt}
	\setlength{\parskip}{0pt}
	\item $\theta(0,\omega) = \omega$ and $\theta(t+s,\omega) = \theta(t,\theta(s,\omega))$ for all $t,s \in \mathbb{R}, \omega \in \Omega$,
	\item the measure is preserved, i.e. $\mathbb{P}(\theta(t,A)) = \mathbb{P}(A)$ for all $t \in \mathbb{R}$ and $A \in \mathscr{F}$.
\end{enumerate}
Moreover,  $\theta$ is said to be \emph{ergodic} if for any $A \in \mathscr{F}, \theta_t A = A$ for all $t \in \mathbb{R}$ implies $\mathbb{P}(A) \in \left\{0,1\right\}$. The second component is a mapping that models the dynamics of the system. This is a $(\mathscr{B}(\mathbb{R}) \otimes \mathscr{F} \otimes \mathscr{B}(X), \mathscr{B}(X))$-measurable function $\phi:\mathbb{R} \times \Omega \times X \to X$ such that \footnote{Here and throughout the paper we will use both the equivalent notations $\phi(t,\omega,x)$ and $\phi(t,\omega)x$.}
\begin{enumerate}[label=(\roman*)]
	\setlength{\itemsep}{0pt}
	\setlength{\parskip}{0pt}
	\item $\phi(0,\omega,x) =x$ for all $x \in X$,
	\item $\phi(t+s,\omega,x) = \phi(t,\theta_s \omega, \phi(s,\omega,x))$ for all $t,s \in \mathbb{R}$ and $x \in X$, $\mathbb{P}$-almost surely 
	(\emph{cocycle property}).\footnote{This definition of random cocycle follows the convention in e.g. \cite{bates2009random,fehrman2019well}. In Arnold \cite{arnold1995random}, the cocycle property is required to hold for \emph{all} $\omega \in \Omega$, instead of almost surely. In case the cocycle exists for almost all $\omega \in \Omega$ only, $\phi$ is called a \emph{crude cocycle} and through a \emph{perfection procedure} it possible to define an indistinguishable RDS for which the cocycle property is fulfilled for all noise realizations, see e.g. \cite[Chapter 4.10]{duan2014effective}  and references therein, most notably \cite{flandoli2004stationary,arnold1995perfect}.}
\end{enumerate}
The skew-product structure $\Theta:\mathbb{R} \times \Omega \times X \to \Omega \times X$ characterizing the random dynamical system $(\theta,\phi)$ can be succinctly written as 
\begin{equation*}
\Theta(t)(\omega,x) := (\theta_t \omega, \phi(t,\omega,x)).
\end{equation*}
A probability measure $\mu$ on $(\Omega \times X, \mathscr{F} \otimes \mathscr{B}(X))$ is said to be \emph{invariant} if
\begin{enumerate}[label=(\roman*)]
	\setlength{\itemsep}{0pt}
	\setlength{\parskip}{0pt}
	\item $\mu(\Theta_t A) = \mu(A)$ for all $t \in \mathbb{R}$ and $A \in \mathscr{F} \otimes \mathscr{B}(X)$
	\item The marginal of $\mu$ on $(\Omega,\mathscr{F})$ is $\mathbb{P}$.
\end{enumerate}
The canonical construction of the sample path space of Brownian motions can be briefly outlined as follows. Let $\Omega:= C_0(\mathbb{R},\mathbb{R}^{2d})$ be the space of all continuous functions $\xi:\mathbb{R} \to \mathbb{R}^{2d}$ such that $\xi(0) = 0$, endowed with the compact open topology. Let $\mathscr{F} = \mathscr{B}(\Omega)$ denote the Borel $\sigma$-algebra on $\Omega$. Then, there exists the so-called \emph{Wiener probability measure} $\mathbb{P}$ on $(\Omega,\mathscr{F})$ ensuring that the processes $(B(t))_{t \in \mathbb{R}}$ and $(W(t))_{t \in \mathbb{R}}$
are independent $d$-dimensional Brownian motions, with corresponding sample paths $(\omega, \beta) :=\xi \in \Omega= \Omega_W \times \Omega_B$ where $\Omega_W$ and $\Omega_B$ denote the intrinsic and common noise sample spaces. 
The natural filtration is the $\sigma$-algebra $\mathscr{F}_{s,t}$ generated by $\xi(u)-\xi(v)$ for $s \leq v \leq u \leq t$. The Wiener measure $\mathbb{P}$ is ergodic with respect to the Wiener shift map $\theta_t : \Omega \to \Omega$ defined by
\begin{equation*}
(\theta_t \xi)(s) := \xi(s+t) - \xi(t), \ \ s \in \mathbb{R}.
\end{equation*}
Therefore, $(\Omega,\mathscr{F},\mathbb{P},(\theta_t)_{t \in \mathbb{R}})$ is an ergodic random dynamical system. 

With this sample path evolution as explicit representation of the noise, the SDE~\eqref{SDE} is a random dynamical system on the product $ \Omega\times \mathbb{R}^d$.  We find that the stochastic Fokker-Planck equation \eqref{stochasticFP} is a random dynamical system on $\Omega_B\times\mathcal{S}$, where $\mathcal{S}$ is the Schwartz space of solutions of \eqref{stochasticFP} identified in Theorem~\ref{thm:existence_uniqueness_regularity}.

\begin{proposition}\label{prop:SFPE_RDS}
The stochastic Fokker-Planck equation~\eqref{stochasticFP} is a random dynamical system.
\end{proposition}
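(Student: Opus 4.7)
The plan is to verify the two ingredients of a random dynamical system separately: the metric dynamical system modelling the common noise, and the cocycle acting on the Schwartz space $\mathcal{S}$. For the first, I simply invoke the canonical Wiener setup already introduced in this section: the marginal $(\Omega_B,\mathscr{F}_B,\mathbb{P}_B,(\theta_t)_{t\in\mathbb{R}})$ obtained by projecting the Wiener shift onto the common noise factor is, by a standard argument, an ergodic metric dynamical system. For the cocycle, I would set $\phi(t,\beta,p):=\Phi(t,\beta)p$, where $\Phi(t,\beta)$ is the solution operator for the non-autonomous Fokker-Planck equation \eqref{eq:non autonomous FP gradient} constructed in Theorem~\ref{thm:existence_uniqueness_regularity}. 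Since that theorem is stated for initial data in $\mathcal{P}(\mathbb{R}^d)$ but produces $\mathcal{S}$-valued solutions after any positive time, the map $\Phi(t,\beta):\mathcal{S}\to\mathcal{S}$ is well-defined for every $t\geq 0$ and every $\beta\in C^{1/2}(\mathbb{R},\mathbb{R}^d)$, and since $\mathbb{P}_B$-a.a. sample paths of $B$ lie in this H\"older class, $\phi$ is $\mathbb{P}_B$-a.s.\ well-defined.

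The identity $\phi(0,\beta,p)=p$ is immediate from the initial condition. To establish the cocycle property, I would fix $s,t\geq 0$ and $p\in\mathcal{S}$, set $q(\cdot):=\Phi(\cdot,\beta)p$ on $[0,s+t]$, and observe that the function $r(u):=q(s+u)$ solves the same non-autonomous Fokker-Planck equation with potential term $V(x)-\eta\cdot(\theta_s\beta)(u)$ and initial datum $q(s)=\Phi(s,\beta)p$; this is a direct substitution using the definition $(\theta_s\beta)(u)=\beta(s+u)-\beta(s)$ and the translation invariance of the gradient and Laplacian acting in $x$. By the uniqueness part of Theorem~\ref{thm:existence_uniqueness_regularity} applied to the shifted noise $\theta_s\beta$, $r(u)=\Phi(u,\theta_s\beta)\Phi(s,\beta)p$, which at $u=t$ gives $\Phi(s+t,\beta)p=\Phi(t,\theta_s\beta)\Phi(s,\beta)p$, i.e.\ the cocycle property.

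The remaining and most delicate step is joint measurability of $\phi:\mathbb{R}_{\geq 0}\times\Omega_B\times\mathcal{S}\to\mathcal{S}$, with $\mathcal{S}$ equipped with its Fr\'echet topology from the seminorms \eqref{seminorm}. My strategy would be to exploit the pathwise H\"older-continuity in time established in Theorem~\ref{thm:existence_uniqueness_regularity}, which gives continuity of $t\mapsto\phi(t,\beta,p)$, together with stability of the solution with respect to the driving path $\beta$ in the local H\"older topology on $C^{1/2}(\mathbb{R},\mathbb{R}^d)$, which gives continuity of $\beta\mapsto\phi(t,\beta,p)$ on a $\mathbb{P}_B$-full measure subset of $\Omega_B$. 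The Kantorovich-Rubinstein contraction of Proposition~\ref{prop: exp_contraction_kantorovich} combined with Proposition~\ref{prop:convergence_Wf_L1} provides the necessary Lipschitz dependence in the initial datum, giving continuity of $p\mapsto\phi(t,\beta,p)$ in $L^1$ and hence, by a standard bootstrap in parabolic regularity, in $\mathcal{S}$. Joint measurability then follows from separate continuity in each argument and separability of $\mathcal{S}$, via the classical argument that a separately continuous map on a product of a Polish space with a measurable space is jointly measurable.

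The main obstacle I anticipate is the measurability/continuous-dependence step, since it requires propagating stability estimates for \eqref{eq:non autonomous FP gradient} in the Schwartz topology rather than only in $L^1$ or in Wasserstein metrics. If a direct Fr\'echet-continuity argument turns out to be cumbersome, an alternative is to first prove measurability of $\phi$ as an $L^1$-valued map (which follows easily from Proposition~\ref{prop:convergence_Wf_L1}) and then upgrade using the smoothing property together with uniform bounds on the seminorms $\|\cdot\|_{n,m}$ on bounded time intervals, which the proof in Appendix~\ref{app:non_auto_FP_IVP_proof} should already supply. The cocycle property only needs to hold $\mathbb{P}_B$-almost surely, so we are in the setting of crude cocycles, and the resulting object defines a random dynamical system in the sense adopted in this paper, as noted in the discussion following the definition.
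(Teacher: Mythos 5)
Your overall strategy coincides with the paper's: fix the common-noise Wiener shift $(\Omega_B,\mathscr{F}_B,\mathbb{P}_B,\theta)$ as the metric dynamical system, take $\phi(t,\beta,p)=\Phi(t,\beta)p$ with $\Phi$ the solution operator from Theorem~\ref{thm:existence_uniqueness_regularity}, and deduce the cocycle identity from uniqueness of solutions together with the shift-covariance of the driving. The paper implements this via the integral representation $\Phi(t,\beta,q)=q+\int_0^tF(\theta_s\beta,\Phi(s,\beta,q))\,ds$ and a piecewise-defined candidate $\tilde\Phi$, following Arnold; your time-translation argument is the same idea presented at the PDE level, and both hinge on the same uniqueness input.

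One detail in your cocycle step is off, and it touches a genuine subtlety that the paper's own write-up also skims over. You assert that $r(u):=q(s+u)$ solves the non-autonomous Fokker--Planck equation \eqref{eq:non autonomous FP gradient} ``with potential term $V(x)-\eta\cdot(\theta_s\beta)(u)$''. That expression is not well-formed (a scalar potential minus a vector), and more importantly, in the $y$-coordinate formulation \eqref{eq:non autonomous FP gradient} the potential is $U(y,t;\beta)=V(y+\eta\beta(t))$, which depends on the \emph{value} $\beta(t)$ and not merely on the shifted path $\theta_t\beta$. Since $(\theta_s\beta)(u)=\beta(s+u)-\beta(s)$, the time-translated $q$ sees the potential $V(y+\eta(\theta_s\beta)(u)+\eta\beta(s))$, which carries the extra constant offset $\eta\beta(s)$; the equation is therefore \emph{not} driven by $\theta_s\beta$ alone in the $y$-variable. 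The shift-covariance that the cocycle argument needs holds cleanly in the $x$-variable formulation \eqref{stochasticFP}, where the common noise enters only through its increments (Stratonovich differential $\circ\,dB$), and the translated solution then does satisfy the $\theta_s\beta$-driven equation directly. Your proof would work once restated in that coordinate system, but as written, the change-of-variables claim is false.

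You also devote substantial space to joint measurability of $\phi$ in the Fr\'echet topology of $\mathcal{S}$, something the paper's proof does not address. The programme you sketch (measurability as an $L^1$-valued map via Proposition~\ref{prop:convergence_Wf_L1}, then upgrade through the smoothing estimates of Appendix~\ref{app:non_auto_FP_IVP_proof}) is reasonable but remains at the level of an outline; the claimed continuous dependence of $\Phi(t,\cdot,p)$ on $\beta$ in the local H\"older topology is not established anywhere in the paper and would need to be proved. This is a legitimate observation about a gap the paper leaves implicit rather than a flaw in your argument, but at the moment it is a plan rather than a proof.
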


Next, we show that \eqref{stochasticFP} possesses a unique global pullback attractor in the Schwartz space $\mathcal{S}$ of rapidly decaying functions for $\mathbb{P}_B$-almost all $\beta \in \Omega_B$. 
\begin{theorem}[Pullback attractor]\label{thm:pullback_attractor}
Let $\Phi$ be the random dynamical system associated to \eqref{stochasticFP}. Then, for $\mathbb{P}_B$-almost all $\beta \in \Omega_B$ \eqref{stochasticFP} has a unique pullback attractor defined by
\begin{equation*}
p_\beta := \lim_{\tau \to \infty} \Phi(\tau,\theta_{-\tau}\beta)p
\end{equation*}
which is independent of $p\in L^1$. Moreover, $p_\beta \in \mathcal{S}$ and convergence is with respect to the semi-norm \eqref{seminorm} on $\mathcal{S}$.
\end{theorem}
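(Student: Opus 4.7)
The plan unfolds in three stages: (i) establish pullback Cauchy-ness in the Kantorovich-Rubinstein distance $\mathcal{W}_f$ via the contraction of Proposition~\ref{prop: exp_contraction_kantorovich}; (ii) upgrade to $L^1$-convergence via Proposition~\ref{prop:convergence_Wf_L1}; (iii) boost to convergence in each Schwartz seminorm using the smoothing effect of the flow (Theorem~\ref{thm:existence_uniqueness_regularity}). Without loss of generality I take $p$ to be a probability density in $L^1$; the general $L^1$ case follows by rescaling and splitting into positive and negative parts, using linearity of the Fokker-Planck flow on densities. All constructions are performed on a single $\mathbb{P}_B$-full measure subset of $\Omega_B$ on which sample paths of $B$ lie in $C^{1/2}(\mathbb{R},\mathbb{R}^d)$ and the cocycle identity of Proposition~\ref{prop:SFPE_RDS} holds.

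For stages (i)--(ii), fix $\tau_1 > \tau_2 > 0$ and apply the cocycle property to write
\begin{equation*}
\Phi(\tau_1, \theta_{-\tau_1}\beta)p \;=\; \Phi(\tau_2, \theta_{-\tau_2}\beta)\bigl[\Phi(\tau_1 - \tau_2, \theta_{-\tau_1}\beta)p\bigr].
\end{equation*}
Proposition~\ref{prop: exp_contraction_kantorovich}, applied with noise $\theta_{-\tau_2}\beta$ and initial conditions $\Phi(\tau_1-\tau_2, \theta_{-\tau_1}\beta)p$ and $p$, then yields
\begin{equation*}
\mathcal{W}_f\bigl(\Phi(\tau_1,\theta_{-\tau_1}\beta)p,\ \Phi(\tau_2,\theta_{-\tau_2}\beta)p\bigr)\;\leq\; e^{-c\tau_2}\,\mathcal{W}_f\bigl(\Phi(\tau_1-\tau_2, \theta_{-\tau_1}\beta)p,\ p\bigr).
\end{equation*}
Eberle's construction permits the concave function $f$ to be chosen bounded (constant past a threshold), so $\mathcal{W}_f$ is uniformly bounded on $\mathcal{P}(\mathbb{R}^d)\times \mathcal{P}(\mathbb{R}^d)$ by some $M<\infty$. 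Hence the right-hand side is at most $Me^{-c\tau_2}$, uniformly in $\beta$ and $\tau_1$, and the pullback sequence is $\mathcal{W}_f$-Cauchy. Proposition~\ref{prop:convergence_Wf_L1} promotes this to $L^1$-Cauchy-ness and delivers the limit $p_\beta$. Independence of the initial density is then immediate: for any two probability densities $p,p' \in L^1$, Proposition~\ref{prop: exp_contraction_kantorovich} gives $\mathcal{W}_f(\Phi(\tau,\theta_{-\tau}\beta)p,\Phi(\tau,\theta_{-\tau}\beta)p') \leq e^{-c\tau}M \to 0$, so the two pullback limits coincide.

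For stage (iii), fix $\varepsilon\in(0,1)$ and decompose again via the cocycle:
\begin{equation*}
\Phi(\tau, \theta_{-\tau}\beta)p \;=\; \Phi(\varepsilon, \theta_{-\varepsilon}\beta)\bigl[\Phi(\tau-\varepsilon, \theta_{-\tau}\beta)p\bigr].
\end{equation*}
The inner bracket is $L^1$-Cauchy by the previous stage. Theorem~\ref{thm:existence_uniqueness_regularity} asserts that the outer propagator $\Phi(\varepsilon, \theta_{-\varepsilon}\beta)$ instantaneously regularises $\mathcal{P}(\mathbb{R}^d)$-inputs into $\mathcal{S}$; combined with the linearity of the flow on densities, this should upgrade to continuity $L^1 \to (\mathcal{S}, \|\cdot\|_{n,m})$ for every $(n,m)$, thereby transporting the $L^1$-Cauchy property into Cauchy-ness and convergence in every Schwartz seminorm, with limit $p_\beta \in \mathcal{S}$. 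The principal obstacle I foresee lies precisely here: extracting quantitative smoothing bounds of the form $\|\Phi(\varepsilon, \theta_{-\varepsilon}\beta)q\|_{n,m}\leq C_{n,m}(\varepsilon,\beta)\,\|q\|_{L^1}$ with $C_{n,m}(\varepsilon,\cdot)$ measurable and almost surely finite, since such a bound is not stated explicitly in Theorem~\ref{thm:existence_uniqueness_regularity} and must be distilled from the a priori estimates of Appendix~\ref{app:non_auto_FP_IVP_proof}, together with a measurable selection argument based on the measurability asserted in Proposition~\ref{prop:SFPE_RDS}.
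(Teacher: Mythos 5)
Your outline of the argument matches the paper in its broad architecture --- contraction in $\mathcal{W}_f$ via the cocycle decomposition, upgrade to $L^1$ via Proposition~\ref{prop:convergence_Wf_L1}, then regularity bootstrap --- but stage~(i) contains a genuine gap at the crucial point. You claim that ``Eberle's construction permits the concave function $f$ to be chosen bounded (constant past a threshold), so $\mathcal{W}_f$ is uniformly bounded.'' This is false for the $f$ constructed in Proposition~\ref{prop: exp_contraction_kantorovich}: there $f'=\varphi g$ with $\varphi(r)=\varphi(R_0)>0$ for $r\geq R_0$ and $g\geq\tfrac12$, so $f'(r)\geq\varphi(R_0)/2>0$ for all $r$ and $f$ grows linearly at infinity. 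Nor could a bounded $f$ serve here: the supermartingale inequality \eqref{eq:suff_condition_f_second} requires
\begin{equation*}
f''(r)-\tfrac14\,r\,k(r)\,f'(r)\;\leq\;-\tfrac{\alpha c}{2}\,f(r)\qquad\text{for all }r>0,
\end{equation*}
and if $f$ were eventually constant then $f'=f''=0$ for large $r$ while $-\tfrac{\alpha c}{2}f(r)$ stays strictly negative, a contradiction. Consequently $\mathcal{W}_f$ is comparable to $W^1$ (as the paper itself uses: $\tfrac{\varphi(R_0)}{2}W^1\leq\mathcal{W}_f\leq W^1$), which is unbounded on $\mathcal{P}(\mathbb{R}^d)\times\mathcal{P}(\mathbb{R}^d)$, so your bound $Me^{-c\tau_2}$ on the Cauchy increment does not exist.

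The missing ingredient is a uniform a priori bound on $\mathcal{W}_f\bigl(p,\ \Phi(\tau,\theta_{-(t+\tau)}\beta)p\bigr)$ over $t,\tau>0$, and this is exactly where the paper's proof invests its real effort. After the same cocycle decomposition, the paper controls $\mathcal{W}_f\leq W^1$ and estimates $\mathbb{E}^{\mathbb{P}_B}\bigl[W^1(\mu,\Psi(\tau,\theta_{-(t+\tau)}\beta)\mu)\bigr]$ by taking the product coupling, which reduces the task to bounding the first moment of $p_\tau:=\mathbb{E}^{\mathbb{P}_B}[p_{\tau,t+\tau;\beta}]$. The key observation is that this averaged density solves the \emph{autonomous} Fokker--Planck equation of \eqref{SDE} with diffusion $\sigma^2+\eta^2$, hence converges to the stationary density $p_\rho$ in $\mathcal{S}$; in particular $\int\|x\|\,p_\tau(x)\,dx$ stays bounded uniformly in $\tau$, giving a constant $D$ with $\mathbb{E}^{\mathbb{P}_B}[\mathcal{W}_f]\leq D$ and hence $\mathbb{P}_B$-a.s.\ finiteness of the pullback increments, from which Cauchy-ness follows via the $e^{-ct}$ prefactor. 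This averaging-over-the-common-noise step is the genuinely new idea in the proof and cannot be bypassed by a boundedness assertion on $f$.

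Your stage~(iii) is essentially what the paper does: $p_\beta=\Phi(t,\beta)p_{\theta_{-t}\beta}$ exhibits the limit as the image of an $L^1$ function under the flow, and the smoothing estimates of Appendix~\ref{app:non_auto_FP_IVP_proof} (Lemmas~\ref{lem:weighted_L1_estimate}--\ref{lem:second_smoothing_estimates} plus bootstrap) give the required quantitative $L^1\to\mathcal{S}$ bounds, so this part of your plan is sound. Your reduction of signed $L^1$ data to probability densities by linearity and positive/negative-part decomposition is also fine, and aligns with the paper's Lemma~\ref{lem:approximation_borel}.
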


By the \emph{Correspondence Theorem} (see e.g. Arnold~\cite[Remark 1.4.2 and Proposition 1.4.3]{arnold1995random}  and Crauel and Flandoli~\cite[Section 4]{crauel1994attractors}), if \eqref{SDE} has a unique stationary measure, then the corresponding random dynamical system has a unique invariant \emph{Markov} measure, i.e. an invariant measure that is measurable with respect to the past,\footnote{See Proposition~\ref{prop:disintegration}(i).} and pullback attractors are identified with disintegrations of this Markov measure. We show in Proposition~\ref{prop:cnpbversusdisintegration} that the common noise pullback attractors of \eqref{stochasticFP} are equal to the expectation of the pullback attractors of \eqref{SDE} with respect to 
the intrinsic noise $W$, for a single fixed common noise realisation $\beta$. We summarize some well-established results on Markov measures and their disintegration \cite{arnold1995random} in the context of our setting. 

\begin{proposition}[Markov measure and its disintegration]\label{prop:disintegration}
Let $\rho$ be the (unique) stationary measure of the random dynamical system $\phi$ associated to \eqref{SDE} and
\begin{equation}\label{pb}
\mu_{\omega,\beta} := \lim_{\tau \to \infty} \phi(\tau,\theta_{-\tau} \omega, \theta_{-\tau} \beta)_{*} \rho.
\end{equation}
Then $\left\{\mu_{\omega,\beta} \right\}_{(\omega,\beta) \in \Omega}$ is $\mathbb{P}$-a.e. unique on $\mathscr{B}(\mathbb{R}^d)$ and
\begin{enumerate}[label=(\roman*)]
\item for all $C \in \mathscr{B}(\mathbb{R}^d), (\omega,\beta) \to \mu_{\omega,\beta}(C)$ is $ \mathscr{F}^{-}$-measurable, where $\mathscr{F}^{-}=\sigma\left(\cup_{s\leq} \mathscr{F}_{s,t}\right)$.
\item for $\mathbb{P}$-a.e. $(\omega, \beta) \in \Omega_W \times \Omega_B$, $\mu_{\omega,\beta}$ is a probability measure on $(X,\mathscr{B}(\mathbb{R}^d))$.
\item for all $A \in \mathscr{F} \otimes \mathscr{B}(\mathbb{R}^d)$
\begin{equation*}
\begin{split}
\mu(A) & := \int_{\Omega_B} \int_{\Omega_W} \int_X \mathbbm{1}_A(\omega,\beta,x) \mu_{\omega,\beta}(dx) \mathbb{P}_W(d \omega) \mathbb{P}_B(d \beta) \\
& = \int_{\Omega_B} \int_{\Omega_W} \mu_{\omega,\beta}(A_{\omega,\beta}) \mathbb{P}_W(d \omega) \mathbb{P}_B(d \beta),
\end{split}
\end{equation*}
where
\begin{equation*}
A_{\omega,\beta} := \left\{x: (\omega,\beta,x) \in A \right\},
\end{equation*}
is an invariant probability measure of $\phi$. The measure $\mu$ is known as the \emph{Markov measure} associated to $\rho$ and it is the unique invariant probability measure of $\phi$ that is measurable with respect to the past, cf.~(i), such that
\[
\int_{\Omega_B} \int_{\Omega_W} \mu_{\omega,\beta}\mathbb{P}_W(d\omega)\mathbb{P}_B(d\beta)=\rho.
\]
\end{enumerate}
\end{proposition}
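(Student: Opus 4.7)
The plan is to invoke the standard Correspondence Theorem between the stationary measure of the Markov process generated by \eqref{SDE} and the invariant Markov measure of the associated skew-product random dynamical system, as developed in Arnold~\cite[Chapters 1.4--1.7]{arnold1995random} and Crauel--Flandoli~\cite[Section~4]{crauel1994attractors}, and to verify that its hypotheses hold in our product sample space $\Omega=\Omega_W\times\Omega_B$. Existence of the pullback limit in~\eqref{pb} and part~(ii) follow from pathwise contractivity: the reflection coupling of~\cite{eberle2016reflection} underlying Proposition~\ref{prop: exp_contraction_kantorovich} produces, for two trajectories of~\eqref{eq:non-auto gradient SDE} sharing the same noise realisations $(\omega,\beta)$, exponential-in-time decay of distances, so that for $\mathbb{P}$-a.e.~$(\omega,\beta)$ the sequence $\phi(\tau,\theta_{-\tau}\omega,\theta_{-\tau}\beta)_*\rho$ is Cauchy in $\mathcal{W}_f$ as $\tau\to\infty$, with limit a probability measure $\mu_{\omega,\beta}$ (mass is preserved under pushforward).

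For~(i), I would observe that for each fixed $\tau>0$ the map $(\omega,\beta)\mapsto\phi(\tau,\theta_{-\tau}\omega,\theta_{-\tau}\beta)_*\rho$ depends only on the noise increments over the interval $[-\tau,0]$, hence is $\mathscr{F}_{-\tau,0}$-measurable and in particular $\mathscr{F}^{-}$-measurable; the $\mathbb{P}$-a.s.~limit of $\mathscr{F}^{-}$-measurable random measures remains $\mathscr{F}^{-}$-measurable. For~(iii), I would first verify the pathwise invariance identity $\mu_{\theta_t\omega,\theta_t\beta}=\phi(t,\omega,\beta)_*\mu_{\omega,\beta}$ by substituting $\tau\mapsto\tau+t$ in~\eqref{pb} and applying the cocycle property together with $\theta_{s+t}=\theta_s\circ\theta_t$; combined with the $\theta$-invariance of $\mathbb{P}$, this yields $\mu\circ\Theta_t^{-1}=\mu$. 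The marginal condition is immediate from Fubini, and the averaging identity $\int\mu_{\omega,\beta}\,d\mathbb{P}(\omega,\beta)=\rho$ then follows from the stationarity of $\rho$ by exchanging the limit $\tau\to\infty$ with the integral against $\mathbb{P}$, justified by uniform tightness of the pullback sequence and dominated convergence.

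The principal obstacle is the uniqueness statement. Following Crauel--Flandoli, any past-measurable invariant probability measure $\mu'$ of $\Theta$ with marginal $\mathbb{P}$ must satisfy, for every $\tau>0$, the conditional identity
\begin{equation*}
\mu'_{\omega,\beta}=\mathbb{E}\bigl[\,\phi(\tau,\theta_{-\tau}\omega,\theta_{-\tau}\beta)_*\mu'_{\theta_{-\tau}\omega,\theta_{-\tau}\beta}\,\bigm|\,\mathscr{F}^{-}\bigr]\qquad\mathbb{P}\text{-a.s.}
\end{equation*}
Combining this with the exponential contraction of $\phi(\tau,\theta_{-\tau}\omega,\theta_{-\tau}\beta)_*\rho$ towards $\mu_{\omega,\beta}$ as $\tau\to\infty$, and using that pushforward by a contracting map flattens any initial distribution to the common limit, forces $\mu'_{\omega,\beta}=\mu_{\omega,\beta}$ $\mathbb{P}$-a.s.~on a countable determining class of Borel sets, hence on all of $\mathscr{B}(\mathbb{R}^d)$. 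The technical heart lies in converting this conditional-expectation identity into a genuine $\mathbb{P}$-a.s.~pointwise equality of random probability measures, which requires regular conditional probabilities on $\mathbb{R}^d$ and a careful exchange of the limit $\tau\to\infty$ with conditional expectation; the rest of the argument is essentially bookkeeping within the RDS framework.
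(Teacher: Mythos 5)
The paper does not in fact supply a proof of Proposition~\ref{prop:disintegration}: it is explicitly presented just before its statement as a summary of ``well-established results on Markov measures and their disintegration \cite{arnold1995random}'', with the citations to Arnold~\cite[Remark~1.4.2, Proposition~1.4.3]{arnold1995random} and Crauel--Flandoli~\cite[Section~4]{crauel1994attractors} doing the work. Your self-contained sketch is welcome, but it rests on a false claim. You assert that the reflection coupling of \cite{eberle2016reflection} ``produces, for two trajectories of \eqref{eq:non-auto gradient SDE} sharing the same noise realisations $(\omega,\beta)$, exponential-in-time decay of distances.'' Reflection coupling gives essentially the opposite: it is the tool one reaches for precisely because trajectories driven by the \emph{same} intrinsic noise need not contract when $V$ is convex only outside a ball. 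It couples the $W$-increments of two solutions by a reflection, producing paths with \emph{different} intrinsic noise whose \emph{laws} contract in $\mathcal{W}_f$; it makes no statement about the pathwise distance between $\phi(\tau,\theta_{-\tau}\omega,\theta_{-\tau}\beta)x$ and $\phi(\tau,\theta_{-\tau}\omega,\theta_{-\tau}\beta)y$ at a single fixed $(\omega,\beta)$. Consequently your Cauchy-in-$\mathcal{W}_f$ argument for $\phi(\tau,\theta_{-\tau}\omega,\theta_{-\tau}\beta)_*\rho$, and with it your proof of part~(ii), does not go through under the hypotheses of the paper.

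The standard proof of existence of the limit in~\eqref{pb} avoids contraction entirely. Because $\rho$ is stationary for the one-point Markov semigroup and the factor $\phi(s,\theta_{-\tau-s}\omega,\theta_{-\tau-s}\beta)$ produced by the cocycle property is independent of $\mathscr{F}_{-\tau,0}$, the process $\tau\mapsto\phi(\tau,\theta_{-\tau}\omega,\theta_{-\tau}\beta)_*\rho(C)$ is a bounded $(\mathscr{F}_{-\tau,0})_{\tau\geq 0}$-martingale for each fixed $C\in\mathscr{B}(\mathbb{R}^d)$, hence converges $\mathbb{P}$-a.s.\ by martingale convergence; tightness (from moment bounds on $\rho$ and the dissipativity of $U$) together with regularization over a countable measure-determining class upgrades this to $\mathbb{P}$-a.s.\ weak convergence to a probability measure $\mu_{\omega,\beta}$, which is~(ii). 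Your parts~(i) and~(iii) (past-measurability, the invariance identity $\mu_{\theta_t\omega,\theta_t\beta}=\phi(t,\omega,\beta)_*\mu_{\omega,\beta}$ via the cocycle property, and the marginal identity by dominated convergence) are fine once existence is secured this way, and your uniqueness sketch via conditional expectation is essentially Crauel--Flandoli's argument; note that it also closes by martingale convergence, not by any ``flattening'' property of the pushforward, since such flattening is exactly what is not available pathwise here.
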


Against this background, we now prove that the common noise pullback attractor of \eqref{stochasticFP} is the expectation of the pullback attractor of the underlying SDE \eqref{SDE} with respect to the intrinsic noise.

\begin{proposition}[Common noise pullback attractor]\label{prop:cnpbversusdisintegration}
Let $\Phi$ be the random dynamical system associated to \eqref{stochasticFP}, $p_\rho$ be the Lebesgue density of the stationary measure $\rho$ of \eqref{SDE}, and
\begin{equation*}
\mu_\beta:= \int_{\Omega_W} \mu_{\omega,\beta} \mathbb{P}_W(d \omega).
\end{equation*}
Then, $\mathbb{P}_B$-a.s., $\mu_\beta$ is a probability measure on $\mathscr{B}(\mathbb{R}^d)$ with Lebesgue density $p_\beta$, where 
\begin{equation*}
p_\beta = \lim_{\tau \to \infty} \Phi(\tau,\theta_{-\tau}\beta) p_\rho
\end{equation*}
and 
\begin{equation*}
\int_{\Omega_B} p_\beta\mathbb{P}_B(d\beta)=p_\rho.
\end{equation*}
\end{proposition}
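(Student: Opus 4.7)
The plan is to test the measure $\mu_\beta$ against bounded continuous functions and identify the result with the integral against $p_\beta$. The bridge between the SDE-level random dynamical system $\phi$ and the Fokker-Planck random dynamical system $\Phi$ is the annealed identity
\[
\int_{\mathbb{R}^d} f(y)\,[\Phi(t,\beta)p_0](y)\,dy \;=\; \int_{\Omega_W}\!\!\int_{\mathbb{R}^d} f\bigl(\phi(t,\omega,\beta,x)\bigr)\,p_0(x)\,dx\,\mathbb{P}_W(d\omega),
\]
valid for any $p_0 \in L^1$, any $f$ bounded and continuous, and any $\beta$ in a full $\mathbb{P}_B$-measure set. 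This is essentially the content of the particle-system derivation in Section~\ref{sec:partmot}: $\Phi(t,\beta)p_0$ is the density of the conditional law of the SDE solution given the common noise realisation $\beta$, with initial state drawn from $p_0$.

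First I would check well-posedness of $\mu_\beta$. By Proposition~\ref{prop:disintegration}(ii), $\mu_{\omega,\beta}$ is a probability measure for $\mathbb{P}$-a.e. $(\omega,\beta)$; Fubini then shows the defining integral $\mu_\beta=\int_{\Omega_W}\mu_{\omega,\beta}\,\mathbb{P}_W(d\omega)$ is well-defined and produces a probability measure on $\mathbb{R}^d$ for $\mathbb{P}_B$-a.e. $\beta$.

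The heart of the argument runs as follows. Specialising the annealed identity to $p_0=p_\rho$, with the noise shifted by $\theta_{-\tau}$, gives
\[
\int f(y)\,[\Phi(\tau,\theta_{-\tau}\beta)p_\rho](y)\,dy \;=\; \int_{\Omega_W}\!\!\int f\bigl(\phi(\tau,\theta_{-\tau}\omega,\theta_{-\tau}\beta,x)\bigr)\,\rho(dx)\,\mathbb{P}_W(d\omega).
\]
Now send $\tau\to\infty$. On the left, Theorem~\ref{thm:pullback_attractor} delivers Schwartz-convergence to $p_\beta$, hence $\int f\,\Phi(\tau,\theta_{-\tau}\beta)p_\rho\,dy\to\int f\,p_\beta\,dy$ for any bounded continuous $f$. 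On the right, Proposition~\ref{prop:disintegration} yields weak convergence $\phi(\tau,\theta_{-\tau}\omega,\theta_{-\tau}\beta,\cdot)_*\rho \to \mu_{\omega,\beta}$ for $\mathbb{P}$-a.e. $(\omega,\beta)$, so the inner integral tends to $\int f\,d\mu_{\omega,\beta}$; the uniform bound $\|f\|_\infty$ justifies dominated convergence in the $\omega$-integral, so the right-hand side tends to $\int_{\Omega_W}\int f\,d\mu_{\omega,\beta}\,\mathbb{P}_W(d\omega)=\int f\,d\mu_\beta$. Equality of the two limits for every bounded continuous $f$ shows that $\mu_\beta$ has Lebesgue density $p_\beta$. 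The marginal identity $\int_{\Omega_B} p_\beta\,\mathbb{P}_B(d\beta)=p_\rho$ then follows from the last line of Proposition~\ref{prop:disintegration}(iii): integrating $\mu_\beta$ against $\mathbb{P}_B$ recovers $\rho$, and comparing densities (Fubini--Tonelli applied to the duality with $C_c^\infty$) yields the stated equality in $L^1$.

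The main obstacle is justifying the exchange of $\lim_{\tau\to\infty}$ with $\int_{\Omega_W}\mathbb{P}_W(d\omega)$; this is what forces the use of bounded continuous test functions and the weak convergence provided by Proposition~\ref{prop:disintegration}. A secondary subtlety is the bookkeeping of null sets: one must select a single full $\mathbb{P}_B$-measure set of $\beta$s on which the $\omega$-convergence holds for a countable dense subset of $C_b(\mathbb{R}^d)$ simultaneously with the Schwartz convergence guaranteed by Theorem~\ref{thm:pullback_attractor}, so that both sides of the key identity converge on the same event.
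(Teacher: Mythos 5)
Your argument is essentially the paper's: both rely on the annealed identity linking the $\Phi$-flow of densities to the $\omega$-average of the $\phi$-pushforward, take $\tau\to\infty$ on each side invoking Theorem~\ref{thm:pullback_attractor} and Proposition~\ref{prop:disintegration}, and justify the interchange of limit and $\omega$-integral by dominated convergence. The only cosmetic difference is that you test against bounded continuous $f$ while the paper tests directly against Borel sets $C$; your choice sidesteps a small subtlety about continuity sets for the weak convergence of $\phi_*\rho$, but the structure and the inputs are identical.
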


We finally stipulate a direct consequence of ergodicity for observables $g:\mathcal{S}\to\mathbb{R}$.

\begin{proposition}\label{prop:erg}
Let $g:\mathcal{S}\to\mathbb{R}$ be continuous and integrable, then
\begin{equation}\label{eq:ergo_result}
\lim_{\tau\to\infty}\frac{1}{\tau}\int_0^\tau g(\Phi(\tau,\beta) p) dt=\int_{\Omega_B} g(p_\beta)\mathbb{P}_B(d\beta),
\end{equation}
$\mathbb{P}_B$-almost surely, independent of $p\in \mathcal{S}$.
\end{proposition}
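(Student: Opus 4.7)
The plan is to reduce the time average along $\Phi(\cdot,\beta)p$ to a time average along the base flow $\theta$, and then apply Birkhoff's ergodic theorem for the ergodic metric dynamical system $(\Omega_B,\mathscr{F},\mathbb{P}_B,\theta)$. The attraction property from Theorem~\ref{thm:pullback_attractor} will take care of the dependence on the initial density $p$.

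The first step is to establish the random-equilibrium identity
\[
\Phi(t,\beta)\,p_\beta \;=\; p_{\theta_t\beta}, \qquad \mathbb{P}_B\text{-a.s.}, \ \forall\,t\geq 0.
\]
This follows from the cocycle property of $\Phi$ applied to the pullback limit definition $p_\beta=\lim_{\tau\to\infty}\Phi(\tau,\theta_{-\tau}\beta)p$ of Theorem~\ref{thm:pullback_attractor}: writing $\Phi(t+\tau,\theta_{-\tau}\beta)=\Phi(t,\beta)\circ\Phi(\tau,\theta_{-\tau}\beta)$ and substituting $s=t+\tau$ identifies the left-hand side with $\lim_{s\to\infty}\Phi(s,\theta_{-s}(\theta_t\beta))p=p_{\theta_t\beta}$.

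Next, I would define $h:\Omega_B\to\mathbb{R}$ by $h(\beta):=g(p_\beta)$. The map $\beta\mapsto p_\beta$ is $\mathscr{F}^{-}$-measurable as a Schwartz-valued map by Theorem~\ref{thm:pullback_attractor}, and continuity and integrability of $g$ make $h$ a $\mathbb{P}_B$-integrable real-valued random variable. Birkhoff's ergodic theorem applied on $(\Omega_B,\mathbb{P}_B,\theta)$ then yields, $\mathbb{P}_B$-almost surely,
\[
\lim_{\tau\to\infty}\frac{1}{\tau}\int_0^\tau h(\theta_t\beta)\,dt \;=\; \int_{\Omega_B} g(p_\beta)\,\mathbb{P}_B(d\beta).
\]
Combined with the invariance identity this is exactly \eqref{eq:ergo_result} for the distinguished initial condition $p=p_\beta$.

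It remains to show that the left-hand side of \eqref{eq:ergo_result} is independent of $p\in\mathcal{S}$. Here I would use the forward contraction from Proposition~\ref{prop: exp_contraction_kantorovich}, giving $\mathcal{W}_f(\Phi(t,\beta)p,\Phi(t,\beta)p_\beta)\leq e^{-ct}\mathcal{W}_f(p,p_\beta)\to 0$, together with the regularisation of Theorem~\ref{thm:existence_uniqueness_regularity}, which provides uniform-in-$t$ Schwartz-seminorm bounds on both trajectories for $t\geq t_0>0$. An interpolation between the $L^1$-control inherited from $\mathcal{W}_f$ (via Proposition~\ref{prop:convergence_Wf_L1}) and these uniform higher-regularity bounds then upgrades the convergence to convergence in the Fr\'echet topology of $\mathcal{S}$. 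Continuity of $g$ gives the pointwise convergence $g(\Phi(t,\beta)p)-g(\Phi(t,\beta)p_\beta)\to 0$, and Ces\`aro averaging of this difference transfers the ergodic-average conclusion from $p_\beta$ to an arbitrary $p\in\mathcal{S}$.

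The main obstacle I anticipate is this final upgrade: passing from forward contraction in a Kantorovich-Rubinstein metric to convergence in the Schwartz topology in which $g$ is assumed continuous. Executing the interpolation requires uniform-in-time control of all seminorms \eqref{seminorm} along trajectories starting from an arbitrary $p\in\mathcal{S}$, which is a genuine regularity question rather than a routine consequence of the cited results and is where most of the work of the proof will sit.
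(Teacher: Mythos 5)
Your plan correctly reproduces the two structural ingredients of the argument: Birkhoff's ergodic theorem applied to the ergodic base flow $(\Omega_B,\mathscr{F},\mathbb{P}_B,\theta)$ via the random-equilibrium identity $\Phi(t,\beta)p_\beta=p_{\theta_t\beta}$, and a contraction argument to remove dependence on the initial density $p$. The first half is in substance the same as the paper's: the paper applies Birkhoff to the skew-product $\Theta$ on $\Omega_B\times\mathcal{S}$ with the Markov measure $\int_{\Omega_B}\delta_{p_\beta}\,\mathbb{P}_B(d\beta)$, but since that measure is supported on the graph of $\beta\mapsto p_\beta$ the application factors through the base exactly as you describe with $h(\beta)=g(p_\beta)$. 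That step gives \eqref{eq:ergo_result} $\mathbb{P}_B$-a.s.\ for the particular starting density $p_\beta$.

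Where you and the paper diverge is in extending the conclusion to arbitrary $p\in\mathcal{S}$. You propose a direct Ces\`aro argument requiring $g(\Phi(t,\beta)p)-g(\Phi(t,\beta)p_\beta)\to 0$, and you correctly identify the obstacle: Proposition~\ref{prop: exp_contraction_kantorovich} combined with Proposition~\ref{prop:convergence_Wf_L1} gives forward convergence in $\mathcal{W}_f$ and in $L^1$, whereas $g$ is only assumed continuous in the Schwartz Fr\'echet topology, so the difference need not vanish. Your interpolation fix would also require uniform-in-$t$ control of the seminorms \eqref{seminorm} along forward orbits, but the estimates in Appendix~\ref{app:non_auto_FP_IVP_proof} carry constants $C(\beta)$ tied to a finite horizon $T$ and are not claimed to be uniform as $T\to\infty$, so this is not a routine consequence of what is already established. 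The paper avoids all of this by invoking Elton's Ergodic Theorem \cite{elton1987ergodic}: for a random/iterated system that is (forward-)contractive with a unique stationary measure, time averages of \emph{continuous} observables converge $\mathbb{P}$-almost surely for \emph{every} initial point, not just for typical ones. In other words, the paper delegates to a known ergodic theorem precisely the upgrade-of-topology work that you flagged as the genuine difficulty in your plan. So your diagnosis is accurate; to complete your route as written you would need to supply the missing uniform regularity and interpolation, whereas the paper's route replaces that analysis with a citation.
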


\section*{Appendices}
\addcontentsline{toc}{section}{Appendices}
\appendix
\section{Proof of Theorem~\ref{thm:existence_uniqueness_regularity}}\label{app:non_auto_FP_IVP_proof}

We start this section by demonstrating the equivalence between total mass conservation for measures and a weighted integrability condition to be fulfilled by a weak solution $q(t)$ of the initial value problem~\eqref{eq:non autonomous FP gradient}. As we shall discuss below, this condition will be employed to prove an important $L^1$ estimate (Lemma~\ref{lem:L1_estimate}) which, in turn, will be crucial for establishing uniqueness of solutions (Lemma~\ref{lem:uniqueness}).

\begin{lemma}[Equivalence between mass conservation and the weighted integrability condition]\label{lem:probability_solution}
A weak solution $q$ of the non-autonomous Fokker-Planck equation~\eqref{eq:non autonomous FP gradient} with initial condition $q_0 \in \mathcal{P}(\mathbb{R}^d)$ is a probability solution for any given $\beta \in C^{1/2}(\mathbb{R},\mathbb{R}^d)$, i.e.
\begin{equation*}
q(t,x;\beta) \geq 0, \ \ \ \int_{\mathbb{R}^d} q(t,x;\beta) dx = 1\ \text{for all}\ t \geq 0,
\end{equation*}
if and only if the \emph{weighted integrability condition}
\begin{equation}\label{weighted_integr_condition}\tag{W.I.C.}
\lim_{N \to \infty} \int_0^T \int_{N<\|x\|<2 N} \|x\|^{-1} \|\nabla U\| \|q(t,x;\beta)\| dx dt = 0
\end{equation}
holds.
\begin{proof}
Let us define the test function $\varphi_N:= \vartheta(\frac{x}{N})$ for any $N \in \mathbb{N}$, where $\vartheta \in C_c^\infty(\mathbb{R}^d,\mathbb{R}_+)$ is a cut-off function such that $\vartheta' \leq 0$ and
\begin{equation*}
\vartheta(z) = 
\begin{cases}
1 & \text{if}\ z \in [0,1]^d \\
2-z & \text{if}\ z \in [1,2]^d.
\end{cases}
\end{equation*}
$\vartheta$ is constructed to be at least $C^2$ at $z=1$ and $z=2$. It is assumed to be equal to $0$ for $z \in [2,\infty)^d$ and extended evenly for $z \in (-\infty,0]^d$. Testing the non-autonomous Fokker Planck equation~\eqref{eq:non autonomous FP gradient} with $\varphi_N$, integrating by parts and rearranging terms we obtain
\begin{equation*}
\langle q(0), \varphi_N \rangle - \int_0^T \int_{\mathbb{R}^d} \nabla U \nabla \varphi_N q dx dt = \langle q(T), \varphi_N \rangle - \int_0^T \langle q, \Delta \varphi_N \rangle dt,
\end{equation*}
where we suppressed the dependence on $(x,\beta)$ in order to simplify the notation. Since in the limit $N \to \infty$ $\Delta \varphi_N \sim N^{-2}$ and
\begin{equation*}
\begin{split}
\lim_{N \to \infty} \langle q(T), \varphi_N \rangle & = \|q(T)\|_{L^1} \\
\lim_{N \to \infty} \langle q(0), \varphi_N \rangle & = \|q(0)\|_{L^1},
\end{split}
\end{equation*}
we immediately obtain
\begin{equation*}
\lim_{N \to \infty} \int_0^T \int_{N<\|x\|<2 N} \|x\|^{-1} \nabla U q dx dt = \bigg| \|q(T)\|_{L^1} - \|q(0)\|_{L^1} \bigg|
\end{equation*}
and therefore we conclude.
\end{proof}
\end{lemma}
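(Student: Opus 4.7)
The plan is to test the non-autonomous Fokker-Planck equation~\eqref{eq:non autonomous FP gradient} against a smooth family of compactly supported cut-off functions approximating the indicator of a large ball, and monitor each term after integration by parts as the radius tends to infinity. Concretely, I would fix a $C^\infty_c(\mathbb{R}^d,\mathbb{R}_+)$ bump $\vartheta$ with $\vartheta \equiv 1$ on the unit ball, $\vartheta \equiv 0$ outside the ball of radius $2$, and $\vartheta$ radially non-increasing in between; then define $\varphi_N(x) := \vartheta(x/N)$. By construction, $\nabla \varphi_N$ is supported on the annulus $\{N < \|x\| < 2N\}$ with $|\nabla \varphi_N| \lesssim N^{-1} \lesssim \|x\|^{-1}$, and $|\Delta \varphi_N| \lesssim N^{-2}$ on the same annulus.

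Next I would write the weak formulation of~\eqref{eq:non autonomous FP gradient} tested against $\varphi_N$, namely
\begin{equation*}
\langle q(T), \varphi_N\rangle - \langle q(0), \varphi_N\rangle
= -\int_0^T\!\!\int_{\mathbb{R}^d} \nabla U \cdot \nabla \varphi_N\, q\, dx\, dt
+ \tfrac{1}{2}\int_0^T \langle q, \sigma^2 \Delta \varphi_N\rangle\, dt.
\end{equation*}
The Laplacian term is bounded in absolute value by $\frac{1}{2}\|\sigma^2\| \|\Delta \varphi_N\|_\infty \int_0^T \|q(t)\|_{L^1} dt = O(N^{-2})$, so it vanishes as $N \to \infty$. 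Monotone convergence, together with $0 \le \varphi_N \le 1$ and $\varphi_N \uparrow 1$ pointwise, gives $\langle q(t), \varphi_N\rangle \to \|q(t)\|_{L^1}$. What remains is the drift term, whose absolute value is dominated by a constant multiple of $\int_0^T\!\int_{N<\|x\|<2N} \|x\|^{-1} \|\nabla U\|\, q\, dx\, dt$, which is precisely the integrand in~\eqref{weighted_integr_condition}.

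Taking $N \to \infty$ therefore yields the identity
\begin{equation*}
\|q(T)\|_{L^1} - \|q(0)\|_{L^1} = -\lim_{N\to\infty}\int_0^T\!\!\int_{\mathbb{R}^d} \nabla U\cdot\nabla\varphi_N\, q\, dx\, dt,
\end{equation*}
and the equivalence follows by reading it both ways: if~\eqref{weighted_integr_condition} holds, the right-hand side vanishes, so $\|q(T)\|_{L^1} = \|q(0)\|_{L^1} = 1$ and $q$ conserves mass; conversely, if $\|q(t)\|_{L^1} \equiv 1$, the limit of the annular drift integral must be zero, which (modulo a sign argument using $\vartheta' \le 0$, so $\nabla\varphi_N$ points radially inward) forces~\eqref{weighted_integr_condition}.

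The positivity part of being a probability solution ($q \ge 0$) is not addressed by this cut-off scheme and must be obtained independently, for example from the parabolic maximum principle applied to the smoothened regularised equation, or from the explicit non-negativity of the fundamental solution used to construct $q$ in Theorem~\ref{thm:existence_uniqueness_regularity}; I would appeal to that here. The main obstacle I anticipate is justifying that the limit of the annular drift integral genuinely equals (up to sign) the one-sided limit in~\eqref{weighted_integr_condition}: one direction (the drift is bounded by the W.I.C. integrand) is straightforward, but to argue conversely that mass conservation forces~\eqref{weighted_integr_condition} itself to vanish, one needs that the integrand in the weak form has a definite sign after the monotonicity of $\vartheta$ is used, or equivalently that cancellations in $\nabla U \cdot \nabla \varphi_N\, q$ do not artificially produce a vanishing integral from a non-vanishing $\|x\|^{-1}\|\nabla U\|\, q$. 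This is the delicate point and would need the direction of $\nabla\varphi_N$ (radial inward) combined with the dissipation property of $\nabla U$ from~\eqref{eq:dissipation_assumption} at large $\|x\|$.
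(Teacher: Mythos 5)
Your proposal follows essentially the same strategy as the paper: test \eqref{eq:non autonomous FP gradient} against a family of radial cut-offs $\varphi_N=\vartheta(x/N)$, integrate by parts, kill the Laplacian term using $|\Delta\varphi_N|=O(N^{-2})$, and identify what remains with the boundary flux integral over the annulus $\{N<\|x\|<2N\}$. Where you go beyond the paper is in explicitly flagging two points that the paper's own proof silently absorbs: (a) that $\langle q(t),\varphi_N\rangle\to\|q(t)\|_{L^1}$ only equals total mass if $q(t)\ge 0$, so nonnegativity has to come from elsewhere (the paper likewise does not derive it in this lemma); and (b) that the ``only if'' direction is the delicate one, since mass conservation only forces the limit of the \emph{signed} flux $\int\int \nabla U\cdot\nabla\varphi_N\, q$ to vanish, whereas \eqref{weighted_integr_condition} asserts vanishing of the nonnegative quantity $\|x\|^{-1}\|\nabla U\|\,|q|$ — these coincide only after invoking $\vartheta'\le 0$ together with the radial dissipativity from \eqref{eq:dissipation_assumption}, which makes $\nabla U\cdot\hat{x}$ eventually of one sign and comparable to $\|\nabla U\|$. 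The paper's final displayed identity quietly replaces $\nabla U\cdot\nabla\varphi_N$ with $\|x\|^{-1}\|\nabla U\|$ without remark, so your caution about cancellations is warranted; filling that gap with the sign argument you sketch is exactly what is needed. In short: same route, correct, and your version is more honest about where the equivalence actually needs the structure of $U$.
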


\begin{remark}[Probability and bounded measures]
The problems of uniqueness in the class of probability measures and in the class of all bounded measures are not equivalent. Consider for simplicity the one dimensional case, the autonomous scenario $\beta \equiv 0$ and the potential
\begin{equation*}
V'(x) = 4 x^3 + 16 x^3 (1 + 4 x^4)^{-1}.
\end{equation*}
Then, the Fokker-Planck equation~\eqref{eq:non autonomous FP gradient} admits the following stationary, bounded sign-changing solution
\begin{equation*}
q(x) = x (1 + 4 x^4)^{-1}.
\end{equation*}
The weighted integrability condition is violated, since
\begin{equation*}
\int_{\mathbb{R}} x^2 |q(x)| dx = +\infty.
\end{equation*}
At the same time, \eqref{eq:non autonomous FP gradient} admits the well known stationary probability solution
\begin{equation*}
\rho(x) = e^{-V(x)} = (1+4 x^4)^{-1} e^{-x^4}.
\end{equation*}
Therefore, there exists a unique solution in the class of probability measures, but there are also nonzero signed solutions in the class of bounded measures. For further details, see \cite{bogachev2015fokker}, Example 4.1.3.
\end{remark}

Theorem~\ref{thm:existence_uniqueness_regularity} is proved by combining a series of energy-type estimates. We remark that, although the focus of this theorem is on probability measures, from here onwards we consider more broadly the evolution of signed measures. This is needed for the proof of uniqueness of the weak solution in Lemma~\ref{lem:uniqueness}, where the evolution of the difference between two probability solutions is considered. 

We make the following key assumptions:
\begin{assumption} \label{assumptions_estimates} 
\noindent
\begin{enumerate}[label=(\Roman*)]
\item Weak $L^1$ solutions of the Fokker Planck equation~\eqref{eq:non autonomous FP gradient} are required to satisfy the weighted integrability condition~\eqref{weighted_integr_condition}.
\item The initial condition $q_0$ is a signed measure.
\item The potential $U$ is infinitely differentiable and satisfies the dissipation condition~\eqref{eq:dissipation_assumption}.
\end{enumerate}
\end{assumption}
Restricted to the setting of probability measures, Assumption~\ref{assumptions_estimates} boils down to the setting of Theorem~\ref{thm:existence_uniqueness_regularity}. 

Next, we note that the space $L^1(\mathbb{R}^d)$ can be interpreted as regular measures and embedded isometrically into the space of signed Borel measures $\mathcal{M}(\mathbb{R}^d)$. Although this result is already known, for the sake of having a self-contained discussion, we provide below an explicit proof, adapting the one in \cite{pdes23measures}, Proposition 2.7.

\begin{lemma}[Approximation of signed measures]\label{lem:approximation_borel}
	For every measure $\mu \in \mathcal{M}(\mathbb{R}^d)$, there exists a sequence of (signed) measures $(\mu_n)_{n \in \mathbb{N}} \in L^1(\mathbb{R}^d)$ such that
	\begin{equation*}
	\| \mu_n \|_{L^1} \leq \| \mu \|_{\mathcal{M}(\mathbb{R}^d)}
	\end{equation*}
	for any $n \in \mathbb{N}$ and
	\begin{equation*}
	\lim_{n \to \infty} \langle \mu_n, \varphi \rangle = \langle \mu, \varphi \rangle
	\end{equation*}
	for all test functions $\varphi \in C_0(\mathbb{R}^d)$, where $C_0(\mathbb{R}^d)$ denotes the space of all continuous functions with compact support on $\mathbb{R}^d$.
	\begin{proof}
		Let $(\varrho_n)_{n \in \mathbb{N}}$ be a sequence of mollifiers, that is, for every $n \in \mathbb{N}, \varrho_n \in C_0^{\infty}(\mathbb{R}^d)$, $\varrho_n$ is nonnegative, such that
		\begin{equation*}
		\int_{\mathbb{R}^d} \varrho_n = 1
		\end{equation*}
		and for every $\delta>0$
		\begin{equation*}
		\lim_{n \to \infty} \int_{\mathbb{R}^d \setminus B(0,\delta)} \varrho_n = 0,
		\end{equation*}
		where $B(0,\delta)$ denotes the open ball centred at $0$ with radius $\delta$. Then, we consider the convolution $\mu_n:=\varrho_n \ast \mu$ between the mollifier and the measure $\mu \in \mathcal{M}(\mathbb{R}^d)$. We immediately have $\mu_n \in L^1(\mathbb{R}^d)$. From the convolution definition and thanks to Fubini's theorem, we deduce
		\begin{equation*}
		\begin{split}
		\int_{\mathbb{R}^d} \varphi d(\varrho_n \ast \mu) & = \int_{\mathbb{R}^d} \varphi(x) \int_{\mathbb{R}^d} \varrho_n(x-y) d \mu(y) dx \\
		&  = \int_{\mathbb{R}^d} \int_{\mathbb{R}^d}\varrho_n(x-y) \varphi(x) dx d \mu(y) \\
		& = \int_{\mathbb{R}^d} \varrho_n \ast \varphi d \mu.
		\end{split}
		\end{equation*}
		Since by construction $\varphi \in C_0(\mathbb{R}^d)$, the sequence $(\varrho_n \ast \varphi)_{n \in \mathbb{N}}$ converges uniformly to $\varphi$ on $\mathbb{R}^d$. Hence, $\mu_n \to \mu$ weakly. Finally, we observe that
		\begin{equation*}
		|\mu_n| \leq \int_{\mathbb{R}^d} \varrho_n(x-y) d |\mu|(y).
		\end{equation*}
		Applying again Fubini's theorem we obtain
		\begin{equation*}
		\|\mu_n\|_{L^1} \leq \int_{\mathbb{R}^d} \left(\int_{\mathbb{R}^d} \varrho_n(x-y) dx \right) d |\mu|(y) \leq \int_{\mathbb{R}^d} d |\mu| (y) = \|\mu\|_{\mathcal{M}(\mathbb{R}^d)}
		\end{equation*}
		and we conclude.
	\end{proof}
\end{lemma}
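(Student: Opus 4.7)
The plan is to construct the approximating sequence via mollification of the signed measure $\mu$, and then verify the two claimed properties (the $L^1$-norm bound and the weak convergence against $C_0$ test functions) separately. Concretely, I would pick a standard sequence of mollifiers $(\varrho_n)_{n\in\mathbb{N}}\subset C_c^\infty(\mathbb{R}^d)$ with $\varrho_n\ge 0$, $\int_{\mathbb{R}^d}\varrho_n=1$, and $\mathrm{supp}(\varrho_n)\subset B(0,1/n)$, and set $\mu_n:=\varrho_n*\mu$. Since $\varrho_n$ is smooth and compactly supported and $\mu$ is a finite signed Borel measure, $\mu_n$ is a well-defined function in $L^1(\mathbb{R}^d)$, which can be regarded back as an (absolutely continuous) signed measure.

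For the norm bound, I would use the Jordan decomposition $\mu=\mu^+-\mu^-$ so that $|\mu_n(x)|\le\int_{\mathbb{R}^d}\varrho_n(x-y)\,d|\mu|(y)$, and then apply Fubini--Tonelli:
\begin{equation*}
\|\mu_n\|_{L^1}\le\int_{\mathbb{R}^d}\!\int_{\mathbb{R}^d}\varrho_n(x-y)\,dx\,d|\mu|(y)=\int_{\mathbb{R}^d}d|\mu|(y)=\|\mu\|_{\mathcal{M}(\mathbb{R}^d)}.
\end{equation*}
This is the routine part, relying only on nonnegativity of $\varrho_n$ and the normalisation $\int\varrho_n=1$.

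For the weak convergence, given $\varphi\in C_0(\mathbb{R}^d)$, I would swap integrals via Fubini (justified by $|\varphi|\le\|\varphi\|_\infty$ and finiteness of $|\mu|$) to get
\begin{equation*}
\langle\mu_n,\varphi\rangle=\int_{\mathbb{R}^d}\varphi(x)\!\int_{\mathbb{R}^d}\varrho_n(x-y)\,d\mu(y)\,dx=\int_{\mathbb{R}^d}(\tilde\varrho_n*\varphi)(y)\,d\mu(y),
\end{equation*}
where $\tilde\varrho_n(z):=\varrho_n(-z)$ is also a mollifier. The sequence $\tilde\varrho_n*\varphi$ converges to $\varphi$ uniformly on $\mathbb{R}^d$ because $\varphi$ is continuous with compact support and therefore uniformly continuous, while the supports of $\tilde\varrho_n*\varphi$ remain contained in a fixed compact set (a $1/n$-neighbourhood of $\mathrm{supp}(\varphi)$). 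Uniform convergence combined with finiteness of $|\mu|$ then yields $\langle\mu_n,\varphi\rangle\to\langle\mu,\varphi\rangle$ by dominated convergence.

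The only mildly delicate point is the passage to uniform convergence of $\tilde\varrho_n*\varphi$ to $\varphi$, which I would handle by the standard estimate $|\tilde\varrho_n*\varphi(y)-\varphi(y)|\le\sup_{|z|\le 1/n}|\varphi(y-z)-\varphi(y)|$ and then invoking uniform continuity of $\varphi$. All other steps are applications of Fubini and properties of the total variation norm; I do not expect any genuine obstacle beyond keeping the sign conventions straight when writing $|\mu_n|\le\varrho_n*|\mu|$ pointwise.
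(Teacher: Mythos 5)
Your proposal is essentially the same as the paper's: both mollify $\mu$ to get $\mu_n=\varrho_n*\mu$, use Fubini to rewrite $\langle\mu_n,\varphi\rangle=\int(\varrho_n*\varphi)\,d\mu$ and conclude weak convergence from uniform convergence of the mollified test function, and bound $\|\mu_n\|_{L^1}$ via $|\mu_n|\le\varrho_n*|\mu|$ and Fubini again. Your use of the reflected kernel $\tilde\varrho_n$ and your choice of mollifiers with $\mathrm{supp}(\varrho_n)\subset B(0,1/n)$ (rather than the paper's slightly weaker decay condition) are cosmetic refinements, not a different argument.
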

Since our result will imply $q(t) \in L^1(\mathbb{R}^d)$ for all $t>0$, thanks to Lemma~\eqref{lem:approximation_borel}, in the proof of Theorem~\ref{thm:existence_uniqueness_regularity} we can restrict ourselves to the case $q_0 \in L^1(\mathbb{R}^d)$.

Our strategy proceeds as follows. First, we prove the $L^1$ estimate~\eqref{L1_estimate} by employing the weighted integrability condition~\eqref{weighted_integr_condition}. Thanks to~\eqref{L1_estimate}, we establish uniqueness in Lemma~\ref{lem:uniqueness}. Next, we prove the weighted $L^1$-estimate~\eqref{weighted_L1_estimate}, which controls the behaviour of the tails of the solution. Following the same ideas, we prove the $L^1$-localization estimate~\eqref{L1_localization} and the two smoothing estimates~\eqref{first_smoothing_estimate} and \eqref{second_smoothing_estimate}. These results establish that the unique solution of the Fokker Planck equation~\eqref{eq:non autonomous FP gradient} is smooth in the space variable $x$, as regular in time as the function $\beta$ and belongs to $L^2(\mathbb{R}^d)$ at all times $t>0$. Exploiting the structure of the equation and the infinite differentiability of the potential, we iterate the argument and achieve infinite differentiability and rapidly decreasing behaviour of all derivatives.

The $L^1$-estimate~\eqref{L1_estimate} will be proved with the help of inequality~\eqref{ineq:weak solution} below. Given a weak solution $q$ of the non-autonomous Fokker Planck equation~\eqref{eq:non autonomous FP gradient}, the fundamental idea to prove this inequality consists in regularizing the equation with a mollifier $\vartheta_\delta$ first and then taking the limit $\delta \to 0$. Let $\vartheta \in C_0^{\infty}(\mathbb{R}^d, \mathbb{R}_+)$ denote a non-negative mollification kernel satisfying $\int_{\mathbb{R}^d} \vartheta(x) dx = 1$ and define the standard Dirac's delta approximation
\begin{equation*}
\vartheta_\delta(x) := \frac{1}{\delta} \vartheta\left(\frac{x}{\delta}\right).
\end{equation*}
Observe that, since $\vartheta$ has a compact support, 
\begin{equation}
\nabla_x \vartheta_\delta(x-y) \ne 0 \ \ \text{if}\ |x-y| \leq C \delta
\label{eq:property_theta_delta}
\end{equation}
for some positive constant $C$. Moreover, using integration by parts, 
\begin{equation}\label{eq:property_integral_theta_prime}
\int_{\mathbb{R}^d} \vartheta'(s) s ds = - \int_{\mathbb{R}^d} \vartheta(s) ds = -1.
\end{equation}
We denote by $S_\delta$ the mollification operator
\begin{equation}
(S_\delta(q))(x) := \int_{\mathbb{R}^d} \vartheta_\delta(x-y) q(y) dy
\label{eq:mollification_operator}
\end{equation}
and by $\text{sgn}_\gamma(x)$ the standard smooth and monotone approximation of $\text{sgn}(x)$, i.e.
\begin{equation}\label{eq:smooth_approx_sign}
\text{sgn}_\gamma(x) := \frac{x}{\sqrt{x^2 + \gamma^2}}.
\end{equation}
Finally, we define
\begin{equation*}
|z|_\gamma := \int_0^z \text{sgn}_\gamma(s) ds.
\end{equation*}
and notice that, by construction, $\lim_{\gamma \to 0} |z|_\gamma = |z|$.

\begin{lemma}[Weak solution inequality]
Any weak solution of \eqref{eq:non autonomous FP gradient} satisfies the inequality
\begin{equation}\label{ineq:weak solution}
\frac{d}{d t} \langle \|q(\cdot,t;\beta)\|, \varphi \rangle \leq \langle \partial_t \varphi - \nabla U \nabla \varphi + \Delta \varphi, \|q(\cdot,t;\beta)\| \rangle
	\end{equation}
	for almost all $t \geq 0$, any $\beta \in C^{1/2}(\mathbb{R},\mathbb{R}^d)$ and $\varphi \in C_0^{\infty}((0,T) \times \mathbb{R}^d)$ such that $\varphi \geq 0$.
	\begin{proof}
		We apply the mollification operator $S_\delta$ defined in~\eqref{eq:mollification_operator} to both sides of the non-autonomous Fokker-Planck equation \eqref{eq:non autonomous FP gradient}, thereby obtaining
		\begin{equation*}
		\partial_t (S_\delta(q)) = \nabla (S_\delta(\nabla U q)) + \Delta (S_\delta(q)), \ \ S_\delta(q)_{\big |_{t=0}} = S_\delta(q_0).
		\end{equation*}
		We define the test function
		\begin{equation*}
		\psi(t,x) := \varphi(t,x) \text{sgn}_\gamma (S_\delta(q)(x)),
		\end{equation*}
		where $\varphi$ is the test function in~\eqref{ineq:weak solution} and $\text{sgn}_\gamma$ is the smooth approximation of the $\text{sgn}$ function defined in~\eqref{eq:smooth_approx_sign}. We obtain
		\begin{equation}\label{eq:fun_weak_sol_1}
		\frac{d}{d t} \langle |S_\delta(q)|_\gamma, \psi \rangle = \underbrace{\langle \partial_t \psi, |S_\delta(q)|_\gamma \rangle}_{a} +  \underbrace{\langle \nabla (S_\delta (\nabla U q)), \psi \rangle}_{b} + \underbrace{\langle \Delta S_\delta(q), \psi \rangle}_{c},
		\end{equation}
		The idea now is to write each term on the RHS of~\eqref{eq:fun_weak_sol_1} in a convenient form by means of integration by parts and take the limit $\delta \to 0$. Firstly, integration by parts implies term $(a)$ can be rewritten as
		\begin{equation}\label{eq:fun_weak_sol_2}
		\langle \Delta S_\delta(q), \psi \rangle = - \langle \nabla S_\delta(q), \text{sgn}^{'}_\gamma (S_\delta(q)) \nabla S_\delta(q) \varphi \rangle - \langle \nabla S_\delta(q), \text{sgn}_\gamma (S_\delta(q)) \nabla \varphi \rangle,
		\end{equation}
		where ${'}$ denotes the derivative, since boundary terms vanish thanks to the dissipation condition~\eqref{eq:dissipation_assumption}. Dropping the first term on the RHS of~\eqref{eq:fun_weak_sol_2} and applying integration by parts again yields
		\begin{equation*}
		\langle S_\delta(q), \psi \rangle \leq \langle \Delta \varphi, |S_\delta(q)|_\gamma \rangle.
		\end{equation*}
		Taking the limit,
		\begin{equation*}
		\lim_{(\gamma,\delta) \to (0,0)} \langle \Delta \varphi, |S_\delta(q)|_\gamma \rangle = \langle |q|, \Delta \varphi \rangle.
		\end{equation*}
		Next, term $(b)$ in~\eqref{eq:fun_weak_sol_1} reads as
		\begin{subequations}
			\begin{align}
			\langle \nabla (S_\delta (\nabla U q)), \varphi\ \text{sgn}_\gamma(S_\delta(q)) \rangle & = \langle \nabla (\nabla U S_\delta(q)), \varphi\ \text{sgn}_\gamma (S_\delta(q)) \rangle \label{eq:fun_weak_sol_4_a} \\
			& \quad + \langle \nabla (S_\delta (\nabla U q) - \nabla U S_\delta(q)), \varphi\ \text{sgn}_\gamma (S_\delta(q)) \rangle. \label{eq:fun_weak_sol_4_b}
			\end{align}
		\end{subequations}
		The first term~\eqref{eq:fun_weak_sol_4_a} can be written as
		\begin{equation*}
		\begin{split}
		\langle \nabla (\nabla U S_\delta(q)), \varphi\ \text{sgn}_\gamma (S_\delta(q)) \rangle & = \langle \Delta U \varphi, S_\delta(q)\ \text{sgn}_\gamma (S_\delta(q)) \rangle  + \langle \nabla U \psi, \nabla |S_\delta(q)|_\gamma \rangle \\
		& = \langle \Delta U \varphi, S_\delta(q)\ \text{sgn}_\gamma (S_\delta(q)) \rangle  - \langle \nabla (\nabla U \varphi), |S_\delta(q)|_\gamma \rangle \\
		& \to \langle \Delta U \varphi, |q| \rangle - \langle \nabla (\nabla U \varphi), |q| \rangle  \\
		& = - \langle \nabla U \nabla \varphi, |q| \rangle
		\end{split}
		\end{equation*}
		as $(\gamma,\delta) \to (0,0)$. In order to write more explicitly the term~\eqref{eq:fun_weak_sol_4_b}, we recall that
		\begin{equation*}
		S_\delta(\nabla U q)(x) - \nabla U S_\delta(q)(x) = \int_{\mathbb{R}^d} \vartheta_\delta(x-y) \left[\nabla_y U - \nabla_x U \right] q(y) dy
		\end{equation*}
		and therefore
		\begin{subequations}
			\begin{align}
			& \langle \nabla (S_\delta(\nabla U q) - \nabla U S_\delta(q)), \varphi\ \text{sgn}_\gamma (S_\delta(q)) \rangle \nonumber \\
			& = \int_{\mathbb{R}^d} \int_{\mathbb{R}^d} \nabla \vartheta_\delta(x-y) \left(\nabla_y U - \nabla_x U \right) q(y) \varphi(x) \text{sgn}_\gamma (S_\delta(q)(x)) dy dx \label{eq:fun_weak_sol_5_a} \\
			& \quad - \int_{\mathbb{R}^d} \int_{\mathbb{R}^d} \vartheta_\delta(x-y) \Delta U(x) q(y) \varphi(x) \text{sgn}_\gamma (S_\delta(q)(x)) dy dx \label{eq:fun_weak_sol_5_b}.
			\end{align}
		\end{subequations}
		Taking the limit $\delta \to 0$ in~\eqref{eq:fun_weak_sol_5_b} yields
		\begin{equation*}
		\lim_{\delta \to 0} \int_{\mathbb{R}^d} \int_{\mathbb{R}^d} \vartheta_\delta(x-y) \Delta U(x) q(y) \varphi(x) \text{sgn}_\gamma (S_\delta (q)(x)) dy dx = \langle \Delta U \varphi, \text{sgn}_\gamma(q) \rangle.
		\end{equation*}
		For what concerns the integral term~\eqref{eq:fun_weak_sol_5_a}, instead, we use~\eqref{eq:property_theta_delta}. Consequently, we may write
		\begin{equation*}
		\nabla_y U - \nabla_x U = - \Delta U(x) (x-y) + O(\delta^2(\|x\|+1)).
		\end{equation*}
		Thus, at first order approximation, \eqref{eq:fun_weak_sol_5_a} reduces to
		\begin{equation*}
		- \int_{\mathbb{R}^d} \int_{\mathbb{R}^d} \nabla \vartheta_\delta(x-y) \Delta U(x) (x-y) q(y) \varphi(x) \text{sgn}_\gamma (S_\delta(q)(x)) dy dx.
		\end{equation*}
		Thanks to~\eqref{eq:property_integral_theta_prime}, we deduce
		\begin{equation*}
		\begin{split}
		& \lim_{\delta \to 0} - \int_{\mathbb{R}^d} \int_{\mathbb{R}^d} \nabla \vartheta_\delta(x-y) \Delta U(x) (x-y) q(y) \varphi(x) \text{sgn}_\gamma (S_\delta(q)(x)) dy dx \\
		& = \langle \Delta U \varphi, \text{sgn}_\gamma(q) \rangle.
		\end{split}
		\end{equation*}
		Hence, \eqref{eq:fun_weak_sol_4_b} vanishes as $\delta \to 0$. Putting everything together, we let $(\gamma,\delta) \to (0,0)$, observe that $\psi \to \varphi$ and $|S_\delta(q)|_\gamma \to |q|$ and finally obtain the inequality \eqref{ineq:weak solution}.
	\end{proof}
\end{lemma}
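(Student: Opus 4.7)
The plan is a double-regularization argument: mollify $q$ in $x$ to enable pointwise computations, and smoothly approximate the sign function so that the chain rule applies. First I would introduce a nonnegative mollifier $\vartheta\in C_0^{\infty}(\mathbb{R}^d)$ with $\int\vartheta=1$, set $\vartheta_\delta(x):=\delta^{-d}\vartheta(x/\delta)$, denote $S_\delta q:=\vartheta_\delta\ast q$, and convolve \eqref{eq:non autonomous FP gradient} in $x$ to obtain the pointwise identity
\[
\partial_t S_\delta(q)\;=\;\nabla\!\cdot\! S_\delta(\nabla U\,q)\;+\;\Delta S_\delta(q).
\]
With $\mathrm{sgn}_\gamma(z):=z/\sqrt{z^2+\gamma^2}$ and $|z|_\gamma:=\int_0^z\mathrm{sgn}_\gamma(s)\,ds$ (so that $|z|_\gamma\to|z|$ as $\gamma\to 0$ and $\mathrm{sgn}_\gamma'\geq 0$), I would then test this smoothed equation against $\psi:=\varphi\,\mathrm{sgn}_\gamma(S_\delta q)$; the chain rule converts the time-derivative term into $\frac{d}{dt}\langle|S_\delta q|_\gamma,\varphi\rangle-\langle|S_\delta q|_\gamma,\partial_t\varphi\rangle$.

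For the diffusion contribution, two integrations by parts yield
\[
\langle \Delta S_\delta q,\psi\rangle \;=\; -\langle \mathrm{sgn}_\gamma'(S_\delta q)\,|\nabla S_\delta q|^2,\varphi\rangle \;+\; \langle|S_\delta q|_\gamma,\Delta\varphi\rangle,
\]
and the first summand is nonpositive since $\mathrm{sgn}_\gamma'\geq 0$ and $\varphi\geq 0$; discarding it is exactly what produces the one-sided nature of \eqref{ineq:weak solution}. For the drift I would split
\[
\nabla\!\cdot\!S_\delta(\nabla U\,q)\;=\;\nabla\!\cdot\!(\nabla U\,S_\delta q)\;+\;\nabla\!\cdot\!\bigl[S_\delta(\nabla U\,q)-\nabla U\,S_\delta q\bigr].
\]
The first summand, paired with $\psi$, simplifies via $\mathrm{sgn}_\gamma(S_\delta q)\nabla S_\delta q=\nabla|S_\delta q|_\gamma$ and one integration by parts to $-\langle\nabla U\,\nabla\varphi,|S_\delta q|_\gamma\rangle$, modulo a residual $\Delta U$-term that must be absorbed against the commutator piece below.

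The main obstacle is the commutator $C_\delta:=S_\delta(\nabla U\,q)-\nabla U\,S_\delta q$, the classical DiPerna--Lions object. Writing
\[
C_\delta(x)\;=\;\int_{\mathbb{R}^d}\vartheta_\delta(x-y)\bigl[\nabla U(y)-\nabla U(x)\bigr]q(y)\,dy
\]
and Taylor-expanding $\nabla U$ around $x$ to first order on the $\delta$-support of $\vartheta_\delta$, I would use the elementary identity $\int\vartheta'(s)\,s\,ds=-1$ to extract a leading-order contribution inside $\nabla\!\cdot\!C_\delta$ that is precisely $\Delta U\cdot q$, cancelling the residual $\Delta U$-term from the preceding decomposition. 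The remainder is $O(\delta^2(1+\|x\|))$ and is controlled in $L^1_{\mathrm{loc}}$ by the smoothness of $U$ together with the local integrability of $q$ and $\nabla U\,q$ guaranteed by \eqref{weighted_integr_condition}. Finally, sending $\delta\to 0$ using dominated convergence and $S_\delta q\to q$ in $L^1$, and then $\gamma\to 0$ by monotone convergence $|z|_\gamma\to|z|$, delivers \eqref{ineq:weak solution} in the sense of distributions in $t$, hence for almost every $t$.
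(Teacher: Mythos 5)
Your overall scaffolding (mollify in $x$, approximate $\mathrm{sgn}$ by $\mathrm{sgn}_\gamma$, test against $\psi=\varphi\,\mathrm{sgn}_\gamma(S_\delta q)$, drop the nonpositive $-\langle\mathrm{sgn}_\gamma'(S_\delta q)\,|\nabla S_\delta q|^2,\varphi\rangle$ term, split the drift into $\nabla\!\cdot(\nabla U\,S_\delta q)$ plus a commutator) is exactly the paper's, but your treatment of the commutator contains a genuine error that propagates into a wrong cancellation mechanism. When you write $\nabla_x\!\cdot C_\delta$ with $C_\delta(x)=\int\vartheta_\delta(x-y)\bigl[\nabla U(y)-\nabla U(x)\bigr]q(y)\,dy$, the Leibniz rule produces \emph{two} terms: one where $\nabla_x$ falls on $\vartheta_\delta(x-y)$, and one where $\nabla_x$ falls on $-\nabla U(x)$, i.e.\ $-\Delta U(x)\,\vartheta_\delta(x-y)$. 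You only account for the first. Taylor expansion plus $\int\vartheta'(s)s\,ds=-1$ indeed makes the first piece converge to $+\Delta U(x)\,q(x)$, but the second piece converges to $-\Delta U(x)\,S_\delta q(x)\to-\Delta U(x)\,q(x)$, and they cancel \emph{each other}; the commutator term $\langle\nabla\!\cdot C_\delta,\psi\rangle$ contributes \emph{zero} in the limit, not $\Delta U\cdot q$. (This is precisely what the paper tracks via its two sub-equations \eqref{eq:fun_weak_sol_5_a} and \eqref{eq:fun_weak_sol_5_b}.)

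Correspondingly, there is no ``residual $\Delta U$-term'' waiting to be absorbed. After expanding $\nabla\!\cdot(\nabla U\,S_\delta q)=\Delta U\,S_\delta q+\nabla U\!\cdot\!\nabla S_\delta q$, pairing with $\psi$, using $\mathrm{sgn}_\gamma(S_\delta q)\nabla S_\delta q=\nabla|S_\delta q|_\gamma$ and one integration by parts, you obtain
\[
\langle \Delta U\,\varphi,\ S_\delta q\,\mathrm{sgn}_\gamma(S_\delta q)-|S_\delta q|_\gamma\rangle \ -\ \langle\nabla U\,\nabla\varphi,\ |S_\delta q|_\gamma\rangle,
\]
and the first term is $O(\gamma)$ pointwise since $z\,\mathrm{sgn}_\gamma(z)-|z|_\gamma=\gamma-\gamma^2(z^2+\gamma^2)^{-1/2}$, so it vanishes in the $\gamma\to 0$ limit \emph{on its own}. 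Hence the main drift term alone already delivers $-\langle\nabla U\,\nabla\varphi,|q|\rangle$, and the commutator must vanish rather than supply a compensating $\Delta U\cdot q$. If you followed your own bookkeeping literally (only the $\nabla_x\vartheta_\delta$ piece of the commutator), you would produce an extraneous $\langle\Delta U\,q\,\mathrm{sgn}_\gamma(q),\varphi\rangle$ in the limit, yielding a strictly stronger and false inequality. The fix is to include the $-\vartheta_\delta\,\Delta U(x)$ Leibniz piece and show that the two commutator contributions cancel, exactly as the paper does.
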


We are now ready to prove the following $L^1$-estimate, the proof of which heavily relies on the weighted integrability condition~\eqref{weighted_integr_condition}.

\begin{lemma}[$L^1$-estimate]\label{lem:L1_estimate}
	Any weak solution of the non-autonomous Fokker Planck equation~\eqref{eq:non autonomous FP gradient} satisfies for any given $\beta \in C^{1/2}(\mathbb{R},\mathbb{R}^d)$ the $L^1$-estimate
	\begin{equation}
	\|q(\cdot,t;\beta) \|_{L^1} \leq \|q(\cdot,0;\beta) \|_{L^1}, \ \ \ t>0.
	\label{L1_estimate}
	\end{equation}
	\begin{proof}
		Let us consider a cut-off function $\vartheta \in C_0^{\infty}(\mathbb{R}^d,[0,1])$ such that
		\begin{equation*}
		\vartheta(z) = 
		\begin{cases}
		1 & \text{if}\ z \in [-1,1]^d \\
		0 & \text{if}\ z \in [2,\infty)^d 
		\end{cases}
		\end{equation*}
		and the test function $\varphi_N := \vartheta \left(\frac{x}{N}\right)$ for some $N \in \mathbb{N}$. Then, using the fact that the function $t \to \langle |q(t)|, \varphi_N \rangle$ is continuous in time, inequality~\eqref{ineq:weak solution} implies
		\begin{equation}\label{eq:fun_weak_sol_3}
		\langle |q(T)|, \varphi_N \rangle \leq \langle |q(\tau)|, \varphi_N \rangle + \int_\tau^T \langle \Delta \varphi_N - \nabla U \nabla \varphi_N, \|q\| \rangle dt,
		\end{equation}
		where we suppressed the dependence on $x$ and $\beta$ in order to simplify the notation. The weighted integrability condition ~\eqref{weighted_integr_condition} ensures that the integral on the RHS of~\eqref{eq:fun_weak_sol_3} tends to $0$ as $N \to +\infty$. Indeed, 
		\begin{equation*}
		\begin{split}
		\big | \langle \nabla U \nabla \varphi_N, \|q\| \rangle \big | & \leq \frac{1}{N} \int_{N < \|x\| < 2 N} \|\nabla U\| \bigg|\vartheta' \left(\frac{x}{N} \right) \bigg| \|q\| dx \\
		& \leq C \int_{N < \|x\| < 2 N} \|x\|^{-1} \|\nabla U\| \|q\| dx 
		\end{split}
		\end{equation*}
		and
		\begin{equation*}
		\int_\tau^T \langle \nabla U \nabla \varphi_N, \|q\| \rangle dt \leq \int_\tau^T \int_{N < \|x\| < 2 N} \|x\|^{-1} \|\nabla U\| \|q\| dx dt \to 0,
		\end{equation*}
		as $N \to \infty$. Passing to this limit in~\eqref{eq:fun_weak_sol_3} therefore yields
		\begin{equation*}
		\| q(\tau)\|_{L^1} \leq \|q(0)\|_{L^1}.
		\end{equation*}
		Finally, we let $\tau \to 0$, exploit continuity and conclude.
	\end{proof}
\end{lemma}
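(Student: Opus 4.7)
The plan is to feed the preceding \textbf{weak solution inequality} \eqref{ineq:weak solution} a sequence of spatial cut-offs $\varphi_N$ approximating $\mathbbm{1}_{\mathbb{R}^d}$, integrate in time, and then pass $N \to \infty$, using the weighted integrability condition \eqref{weighted_integr_condition} to kill the boundary contribution coming from $\nabla U$. The point is that $\varphi \equiv 1$ is not an admissible test function in the weak formulation, so we must approximate it by compactly supported functions and then ensure the error terms disappear in the limit.

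Concretely, fix $\vartheta \in C_c^{\infty}(\mathbb{R}^d,[0,1])$ with $\vartheta \equiv 1$ on $[-1,1]^d$ and $\vartheta \equiv 0$ outside $[-2,2]^d$, and set $\varphi_N(x) := \vartheta(x/N)$; this is time-independent, nonnegative, and satisfies $\|\nabla \varphi_N\|_\infty \lesssim N^{-1}$ with support in the annulus $A_N := \{N < \|x\| < 2N\}$, while $\|\Delta \varphi_N\|_\infty \lesssim N^{-2}$ on the same set. Inserting $\varphi = \varphi_N$ into \eqref{ineq:weak solution}, noting $\partial_t \varphi_N = 0$, and integrating from $\tau > 0$ to $T$ (using continuity of $t \mapsto \langle |q(t)|, \varphi_N\rangle$) gives
\begin{equation*}
\langle |q(T)|, \varphi_N \rangle \;\leq\; \langle |q(\tau)|, \varphi_N \rangle + \int_\tau^T \langle \Delta \varphi_N - \nabla U \cdot \nabla \varphi_N,\ |q| \rangle\, dt.
\end{equation*}
By monotone convergence, $\langle |q(t)|, \varphi_N \rangle \nearrow \|q(t)\|_{L^1}$ as $N \to \infty$ for each fixed $t$, so the two outer terms converge to the desired $L^1$ norms. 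The $\Delta \varphi_N$ contribution is bounded by $C N^{-2} \int_\tau^T \int_{A_N} |q|\, dx\, dt \to 0$ since $|q| \in L^1$.

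The only delicate point, and the main obstacle, is the drift term $\int_\tau^T \langle \nabla U \cdot \nabla \varphi_N, |q|\rangle dt$: here $\nabla U$ can grow unboundedly in $\|x\|$, so the naive bound $\|\nabla\varphi_N\|_\infty \cdot \|q\|_{L^1}$ does \emph{not} suffice. This is precisely what the weighted integrability condition is tailored for. Using $|\nabla \varphi_N(x)| \leq C N^{-1} \mathbbm{1}_{A_N}(x) \leq C' \|x\|^{-1} \mathbbm{1}_{A_N}(x)$, one estimates
\begin{equation*}
\left| \int_\tau^T \langle \nabla U \cdot \nabla \varphi_N, |q| \rangle\, dt \right|
\;\leq\; C' \int_\tau^T \int_{A_N} \|x\|^{-1}\, \|\nabla U\|\, |q|\, dx\, dt,
\end{equation*}
which tends to $0$ as $N \to \infty$ by \eqref{weighted_integr_condition}. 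Combining the three limits yields $\|q(T)\|_{L^1} \leq \|q(\tau)\|_{L^1}$, and finally letting $\tau \downarrow 0$ (invoking the weak continuity in $C([0,T]; L^1)$ guaranteed by the definition of weak $L^1$-probability solution, together with Fatou) delivers the claimed inequality. Signs are irrelevant here because the inequality is phrased on $|q|$, which is exactly why we had to derive \eqref{ineq:weak solution} in the signed setting beforehand.
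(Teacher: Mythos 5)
Your proof is correct and follows essentially the same route as the paper: test the weak solution inequality \eqref{ineq:weak solution} with the dilated cut-offs $\varphi_N$, let $N\to\infty$ so the $\Delta\varphi_N$ contribution dies by its $O(N^{-2})$ size and the drift contribution dies by the weighted integrability condition \eqref{weighted_integr_condition}, then let $\tau\downarrow 0$ using continuity. The only cosmetic difference is that you invoke monotone convergence for $\langle|q(t)|,\varphi_N\rangle\to\|q(t)\|_{L^1}$ (which requires $\vartheta$ chosen so that $\varphi_N$ is nondecreasing in $N$; dominated convergence with the bound $0\le\varphi_N\le1$ works with no such hypothesis), whereas the paper handles this limit implicitly; both are sound.
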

 
Uniqueness of the weak solution for the initial value problem~\eqref{eq:non autonomous FP gradient} is an immediate consequence of Lemma~\ref{lem:L1_estimate}.

\begin{lemma}[Uniqueness]\label{lem:uniqueness}
The initial value problem ~\eqref{eq:non autonomous FP gradient} admits a unique weak solution for any given $\beta \in C^{1/2}(\mathbb{R},\mathbb{R}^d)$.
	\begin{proof}
		Let us denote by $q_1,q_2$ two distinct weak solutions of~\eqref{eq:non autonomous FP gradient} with the same initial condition $q_0$. Define $q(x,t;\beta) := q_1(x,t;\beta) - q_2(x,t;\beta)$. Then, thanks to linearity, $q$ will be a weak solution of~\eqref{eq:non autonomous FP gradient} and
		\begin{equation*}
		\|q(\cdot,t;\beta)\|_{L^1} \leq \|q(\cdot,0;\beta)\|_{L^1} = 0
		\end{equation*}
		thanks to the $L^1$-estimate~\eqref{L1_estimate}.
	\end{proof}
\end{lemma}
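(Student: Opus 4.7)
The plan is to exploit the linearity of the non-autonomous Fokker-Planck equation~\eqref{eq:non autonomous FP gradient} together with the $L^1$-contraction estimate already proved in Lemma~\ref{lem:L1_estimate}. Given two weak solutions $q_1, q_2$ of \eqref{eq:non autonomous FP gradient} sharing the same initial datum $q_0$ and the same driving $\beta$, I would form their difference $q := q_1 - q_2$. By linearity of the PDE, $q$ is itself a weak solution with zero initial condition. Note that $q$ is in general only a signed measure, not a probability measure, which is precisely why the author developed Lemma~\ref{lem:L1_estimate} in the broader signed-measure setting (Assumption~\ref{assumptions_estimates}(II) and Lemma~\ref{lem:approximation_borel}) rather than restricting to probability solutions.

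Once $q$ is identified as a weak signed solution with $q(\cdot,0;\beta) \equiv 0$, applying~\eqref{L1_estimate} directly gives
\begin{equation*}
\|q(\cdot,t;\beta)\|_{L^1} \;\leq\; \|q(\cdot,0;\beta)\|_{L^1} \;=\; 0, \qquad t>0,
\end{equation*}
which forces $q_1(\cdot,t;\beta) = q_2(\cdot,t;\beta)$ in $L^1$ for every $t>0$, establishing uniqueness.

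The one bookkeeping item I would verify carefully is that Lemma~\ref{lem:L1_estimate} is genuinely applicable to $q$. Its proof relied on the weighted integrability condition~\eqref{weighted_integr_condition}, so I need to confirm that $|q| \le |q_1| + |q_2|$ inherits~\eqref{weighted_integr_condition} from $q_1$ and $q_2$. This is immediate by the triangle inequality under the annular integral in~\eqref{weighted_integr_condition}, so no real obstacle arises. In short, the entire content of the lemma is a one-line consequence of linearity plus the contraction bound~\eqref{L1_estimate}; the genuine work has already been done upstream in setting up the signed-measure framework and establishing the $L^1$-estimate.
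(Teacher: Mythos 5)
Your argument is exactly the paper's: form the difference $q = q_1 - q_2$, invoke linearity to see that $q$ is again a weak solution with zero initial data, and apply the $L^1$-estimate~\eqref{L1_estimate} to conclude $\|q(\cdot,t;\beta)\|_{L^1} = 0$. Your extra remark verifying that the weighted integrability condition~\eqref{weighted_integr_condition} is inherited by $q$ via the triangle inequality is a small but worthwhile bookkeeping step that the paper leaves implicit in its signed-measure framework.
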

Next, we proceed with proving the following weighted $L^1$-estimate, which describes the global behaviour of the tails of the weak solution.

\begin{lemma}[Weighted $L^1$-estimate]\label{lem:weighted_L1_estimate}
The unique weak solution of the non-autonomous Fokker Planck equation~\eqref{eq:non autonomous FP gradient} satisfies for any given $\beta \in C^{1/2}(\mathbb{R},\mathbb{R}^d)$ the weighted $L^1$-estimate
\begin{equation}
\| (1+x^n) q(\cdot,t;\beta) \|_{L^1} \leq C(\beta) \|(1+x^n) q(\cdot,0;\beta) \|_{L^1}
\label{weighted_L1_estimate}
\end{equation}
for any $n\ \in \mathbb{N}$ and some constant $C=C(\beta)<\infty$. \footnote{In this and subsequent lemmas time-dependence of the constant $C$ is not problem. Since we are interested in local regularity for $t>0$, we are considering $t \in [0,T]$. Without loss of generality, we might set $T=1$. For what concerns the $\beta$-dependence instead, the constant $C$ in general will not be uniform with respect to $\beta$. To gain uniformity, additional assumptions on $\beta$ would be required (such as boundedness), but in our context this is not needed.}

\begin{proof}
	We multiply the non-autonomous Fokker Planck equation~\eqref{eq:non autonomous FP gradient} by $(1+x^n) \text{sgn}(q)$ for any $n \in \mathbb{N}$ and integrate over $\mathbb{R}^d$:
	\begin{equation*}
	\begin{split}
	\frac{d}{dt} \| (1+x^n) q \|_{L^1} & = \int_{\mathbb{R}^d} (1+x^n) \text{sgn}(q) \partial_t q dx \\
	& = \underbrace{\int_{\mathbb{R}^d} (1+x^n) \text{sgn}(q) \Delta q dx}_{a} + \underbrace{\int_{\mathbb{R}^d} (1+x^n) \text{sgn}(q) \nabla (\nabla U q) dx}_{b}.
	\end{split}
	\end{equation*}
	Integration by parts applied to the term $(b)$ yields
	\begin{subequations}
		\begin{align*}
		\int_{\mathbb{R}^d} (1+x^n) \text{sgn}(q) \nabla (\nabla U q) dx & = \nabla U q (1+x^n) \text{sgn}(q) \bigg |_{-\infty}^{+\infty} - \int_{\mathbb{R}^d} \nabla U q\  \text{sgn}(q) n x^{n-1} dx \\
		& \leq - \gamma \|(1+x^{3+(n-1)}) q \|_{L^1} + C(\beta) \|q\|_{L^1}, 
		\end{align*}
	\end{subequations}
	for some constants $\gamma > 0$ and $C(\beta)>0$ depending on the modulus of $\beta$, where we used the dissipation condition for the shifted potential~\eqref{eq:dissipation_assumption}. Similarly, integration by parts applied to the term $(a)$ yields
	\begin{subequations}
		\begin{align*}
		\int_{\mathbb{R}^d} (1+x^n) \text{sgn}(q) \Delta q dx & = - \int_{\mathbb{R}^d} (\nabla q) \text{sgn}(q) n x^{n-1} dx \\
		& \leq - n \int_{\mathbb{R}^d} \nabla \|q\| x^{n-1} dx \\
		& = n(n-1) \int_{\mathbb{R}^d} \|q\| x^{n-2} dx = n(n-1) \|x^{n-2} q \|_{L^1}.
		\end{align*}
	\end{subequations}
	Hence,
	\begin{equation*}
	\frac{d}{dt} \|(1+x^n) q \|_{L^1} + \gamma \|(1+x^{3+(n-1)}) q \|_{L^1} \leq C(\beta) \|q\|_{L^1}
	\end{equation*}
	Setting $\gamma = 0$, integrating with respect to time, using the $L^1$-estimate \eqref{L1_estimate} and noticing that
	\begin{equation*}
	\|(1+x^n) q(\cdot,0;\beta) \|_{L^1} \geq \|q(\cdot,0;\beta)\|_{L^1},
	\end{equation*}
	we finally obtain inequality \eqref{weighted_L1_estimate}.
\end{proof}
\end{lemma}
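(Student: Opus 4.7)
The plan is to derive a differential inequality for $M_n(t) := \|(1+x^n)\,q(\cdot,t;\beta)\|_{L^1}$ by testing the non-autonomous Fokker--Planck equation against $(1+x^n)\,\mathrm{sgn}(q)$ and then integrating in $t$ on $[0,T]$ (WLOG $T=1$, as indicated in the footnote).

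First I would justify the chain-rule manipulation $\partial_t|q| = \mathrm{sgn}(q)\,\partial_t q$ and the subsequent integrations by parts by the same mollification procedure that produced the weak-solution inequality~\eqref{ineq:weak solution}: replace $q$ by $S_\delta q$, replace $\mathrm{sgn}$ by the smooth approximation $\mathrm{sgn}_\gamma$, insert an additional spatial cut-off $\varphi_N$ as in the proof of Lemma~\ref{lem:L1_estimate} to tame the unbounded weight $(1+x^n)$, carry out the formal computation rigorously in that regularised setting, and finally pass to the limits $(\gamma,\delta,N^{-1})\to 0$. The boundary contribution produced by $\varphi_N$ at $\|x\|\sim N$ disappears thanks to the weighted integrability condition~\eqref{weighted_integr_condition}, and the mollifier commutator error vanishes as in~\eqref{eq:fun_weak_sol_5_a}--\eqref{eq:fun_weak_sol_5_b}.

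Granting this, the testing produces two contributions. The diffusion term, after two integrations by parts, becomes $n(n-1)\,\|x^{n-2}\,q\|_{L^1}$, which is controlled by $\varepsilon\,\|(1+x^{n+2})\,q\|_{L^1} + C_\varepsilon\,\|q\|_{L^1}$ via Young's inequality. The drift term, after one integration by parts, becomes $-n\int x^{n-1}\,\nabla U\cdot|q|\,dx$; here I invoke the dissipation condition~\eqref{eq:dissipation_assumption}, rewriting $x^{n-1}\,\nabla U = x^{n-3}\,(\nabla U\cdot x\,\|x\|^2)/\|x\|^2$ and using $\nabla U\cdot x\,\|x\|^2\geq \tfrac12\|x\|^6 - C|\beta|^6$, to extract a negative contribution of the form $-\gamma\,\|(1+x^{n+2})\,q\|_{L^1}$ plus a remainder of order $C(\beta)\,\|q\|_{L^1}$, where $C(\beta)$ is polynomial in $\sup_{[0,T]}|\beta|$. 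Choosing $\varepsilon$ small enough to be absorbed by $\gamma$, the resulting inequality is
\[
\frac{d}{dt}M_n(t) + \tfrac{\gamma}{2}\,\|(1+x^{n+2})\,q\|_{L^1} \;\leq\; C(\beta)\,\|q(\cdot,t;\beta)\|_{L^1}.
\]
Dropping the non-negative dissipation term on the left and applying the $L^1$-estimate~\eqref{L1_estimate} to bound $\|q(\cdot,t;\beta)\|_{L^1}\leq \|q(\cdot,0;\beta)\|_{L^1}\leq M_n(0)$, integration in $t\in[0,T]$ yields $M_n(t)\leq M_n(0) + C(\beta)\,T\,M_n(0) = C(\beta)\,M_n(0)$, which is~\eqref{weighted_L1_estimate}.

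The main obstacle is the rigorous justification of the formal manipulations: testing a PDE with unbounded coefficients against $(1+x^n)\,\mathrm{sgn}(q)$ simultaneously requires (i) regularising $\mathrm{sgn}$, producing a mollifier--$\nabla U$ commutator error that must be shown to vanish as in~\eqref{eq:fun_weak_sol_5_a}--\eqref{eq:fun_weak_sol_5_b}, and (ii) a spatial cut-off whose boundary contribution is killed only by the weighted integrability condition~\eqref{weighted_integr_condition}. Once both regularisations are in place, the essential analytic input is the dissipation condition, which is exactly what turns the drift into a moment-absorbing rather than moment-amplifying term.
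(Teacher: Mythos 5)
Your proposal takes essentially the same approach as the paper: test with $(1+x^n)\,\mathrm{sgn}(q)$, integrate by parts, invoke the dissipation condition to render the drift term negative, drop it, apply the $L^1$-estimate, and integrate in time. In fact you are somewhat more careful than the paper at two points — you explicitly absorb the positive diffusion remainder $n(n-1)\|x^{n-2}q\|_{L^1}$ via Young's inequality into the $\gamma$-term and $C(\beta)\|q\|_{L^1}$ (the paper silently omits this term from its final differential inequality), and you phrase the last step as ``drop the non-negative $\gamma$-term'' rather than the paper's odd ``set $\gamma=0$'' — so the proposal is correct.
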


Next, given a weighted weak solution of our non-autonomous Fokker-Planck equation, we establish a $L^1$-localization estimate.

\begin{lemma}[$L^1$ localization estimate] \label{lem:loc_estimate}
The unique weak solution of the non-autonomous Fokker Planck equation~\eqref{eq:non autonomous FP gradient} satisfies for any given $\beta \in C^{1/2}(\mathbb{R},\mathbb{R}^d)$ the $L^1$ localization estimate
\begin{equation}
\|(1+x^n) q(\cdot,t;\beta) \|_{L^1} \leq C(\beta) \frac{1+t^N}{t^N}
\|q(\cdot,0;\beta)\|_{L^1},
\label{L1_localization}
\end{equation}
for any $n\ \in \mathbb{N}, t>0$, some $N \in \mathbb{N}$ and some constant $C=C(\beta)<\infty$. 
\begin{proof}
Let us consider again the inequality
\begin{equation*}
\frac{d}{dt} \|(1+x^n) q\|_{L^1} + \gamma \|(1+x^{3+(n-1)}) q \|_{L^1} \leq C(\beta) \|q\|_{L^1}
\end{equation*}
for any $n \in \mathbb{N}$ and some constant $\gamma>0$, as derived in the proof of Lemma~\ref{lem:weighted_L1_estimate}. We multiply both sides by $t^N$ for some $N \in \mathbb{N}$:
\begin{equation*}
t^N \frac{d}{dt} \|(1+x^n) q \|_{L^1} + t^N \gamma \|(1+x^{3+(n-1)}) q \|_{L^1} \leq t^N C(\beta) \|q\|_{L^1}.
\end{equation*}
Using the H{\"o}lder inequality
\begin{equation}
t^{N-1} \|(1+x^n) q \|_{L^1} \leq \|q\|^{\frac{1}{N}}_{L^1} (t^{N} \|(1+x^{n \frac{N}{N-1}}) q \|_{L^1})^{\frac{N-1}{N}},
\label{eq:interpol_inequality_1}
\end{equation}
we obtain
\begin{equation*}
\frac{d}{dt} (t^N \|(1+x^n) q\|_{L^1}) + \gamma t^N \|(1+x^{3+(n-1)}) q \|_{L^1} \leq C(\beta) (1+t^N) \|q\|_{L^1},
\end{equation*}
which implies
\begin{equation*}
\frac{d}{dt} (t^N \|(1+x^n)q \|_{L^1}) \leq C(\beta) (1+t^N) \|q\|_{L^1} \leq C (\beta)(1+t^N) \|q_0\|_{L^1},
\end{equation*}
thanks to the $L^1$-estimate~\eqref{L1_estimate}. Integrating on both sides, we conclude.
\end{proof}
\end{lemma}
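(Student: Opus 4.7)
The starting point is the differential inequality already produced in the proof of Lemma~\ref{lem:weighted_L1_estimate}, namely
\begin{equation*}
\frac{d}{dt} \|(1+x^n) q \|_{L^1} + \gamma \|(1+x^{n+2}) q \|_{L^1} \leq C(\beta) \|q\|_{L^1},
\end{equation*}
in which the dissipation term controls a strictly higher moment than the one being estimated. The task is to upgrade this into a bound whose right-hand side only involves $\|q_0\|_{L^1}$, at the cost of a factor that becomes singular as $t \to 0$. Since $\|(1+x^n) q\|_{L^1}$ can be infinite at $t=0$ for general $L^1$ data, the natural device is to weight in time by $t^N$, so that the quantity we track vanishes at $t=0$.

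My plan is to multiply the inequality above by $t^N$ with an integer $N$ to be chosen, and use the product rule in the form
\begin{equation*}
\frac{d}{dt}\bigl(t^N \|(1+x^n) q\|_{L^1}\bigr) + \gamma t^N \|(1+x^{n+2}) q\|_{L^1} \leq N t^{N-1} \|(1+x^n) q\|_{L^1} + C(\beta)\,t^N \|q\|_{L^1}.
\end{equation*}
The extra term $N t^{N-1}\|(1+x^n) q\|_{L^1}$ on the right is the only obstruction; I would control it by H\"older interpolation between $\|q\|_{L^1}$ and a higher-moment norm, of the form
\begin{equation*}
t^{N-1} \|(1+x^n) q\|_{L^1} \leq \|q\|_{L^1}^{1/N} \bigl(t^N \|(1+x^{nN/(N-1)}) q\|_{L^1}\bigr)^{(N-1)/N},
\end{equation*}
followed by a Young inequality with conjugate exponents $N,\ N/(N-1)$. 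Choosing $N$ large enough that $nN/(N-1) \leq n+2$, i.e.\ $N\geq 1+n/2$, ensures the higher-moment factor produced by interpolation is dominated by the dissipation term $\gamma t^N \|(1+x^{n+2}) q\|_{L^1}$ on the left, and can be absorbed into it.

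Once the absorption is carried out, the surviving inequality reads
\begin{equation*}
\frac{d}{dt}\bigl(t^N \|(1+x^n) q\|_{L^1}\bigr) \leq C(\beta)(1+t^N)\|q\|_{L^1} \leq C(\beta)(1+t^N)\|q_0\|_{L^1},
\end{equation*}
where the last bound uses the $L^1$-estimate of Lemma~\ref{lem:L1_estimate}. Integrating from $0$ to $t$ (the boundary term at $0$ vanishes due to the $t^N$ weight) and dividing by $t^N$ yields exactly the estimate \eqref{L1_localization}. The step I expect to be the actual pressure point is the interpolation/absorption: one must pick $N$ so that the moment $nN/(N-1)$ produced by H\"older is dominated by the moment $n+2$ provided by the dissipation, which forces a lower bound on $N$ depending on $n$. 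Everything else — the product rule, integration in time, and invoking the $L^1$-estimate — is routine once this combinatorial choice is in place.
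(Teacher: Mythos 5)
Your proposal follows the same route as the paper's proof: multiply the weighted-$L^1$ differential inequality by $t^N$, rewrite via the product rule, control the extra term $N t^{N-1}\|(1+x^n)q\|_{L^1}$ by the H\"older interpolation \eqref{eq:interpol_inequality_1}, absorb the resulting higher-moment factor into the dissipation term, and integrate in time using the $L^1$-estimate \eqref{L1_estimate}. The only difference is that you make explicit the Young step and the constraint $N \geq 1 + n/2$ (so that $nN/(N-1)\leq n+2$), which the paper leaves implicit; these details are indeed needed for the absorption to go through, so your version is the same argument spelled out more carefully.
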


Next, we prove two smoothing estimates which ensure the unique weak solution of the initial value problem~\eqref{eq:non autonomous FP gradient} belongs to $L^2$ and, in fact, to the Sobolev space $H^1:=W^{1,2}$, for all $t>0$, that is, its spatial derivative belongs to $L^2$ as well.

\begin{lemma}[First smoothing estimate] \label{lem:first_smoothing_estimate}
	The unique weak solution of the non-autonomous Fokker Planck equation~\eqref{eq:non autonomous FP gradient} satisfies for any given $\beta \in C^{1/2}(\mathbb{R},\mathbb{R}^d)$
	\begin{equation}
	\|q(\cdot,t;\beta)\|^2_{L^2} + \int_{t}^{t+1} \|\nabla_s q(s,t;\beta) \|^2_{L^2} ds \leq C(\beta) \frac{t^N+1}{t^N} \|q(\cdot,0;\beta) \|^2_{L^1},
	\label{first_smoothing_estimate}
	\end{equation}
	for some $N\ \in \mathbb{N}$ and some constant $C=C(\beta)<\infty$. 
	\begin{proof}
		Let us multiply the non-autonomous Fokker Planck equation \eqref{eq:non autonomous FP gradient} by $q$ and integrate over $\mathbb{R}^d$:
		\begin{equation*}
		\int_{\mathbb{R}^d} q \partial_t q dx = \int_{\mathbb{R}^d} q \Delta q dx + \int_{\mathbb{R}^d} q \nabla (\nabla U q) dx.
		\end{equation*}
		Using integration by parts we obtain
		\begin{subequations}
			\begin{align*}
			\int_{\mathbb{R}^d} q \Delta q dx & = \nabla q \nabla q \bigg |_{-\infty}^{+\infty} - \int_{\mathbb{R}^d} (\nabla q)^2 dx = - \|\nabla q\|^2_{L^2} \\
			\int_{\mathbb{R}^d} q \nabla (\nabla U q) dx & =  \frac{1}{2} \int_{\mathbb{R}^d} \|q\|^2 \Delta U dx.
			\end{align*}
		\end{subequations}
		Putting everything together,
		\begin{equation*}
		\frac{1}{2} \frac{d}{dt} \|q\|^2_{L^2} + \|\nabla q\|^2_{L^2} = \frac{1}{2} \langle \Delta U, \|q\|^2 \rangle.
		\end{equation*}
		Thanks to the dissipation condition~\eqref{eq:dissipation_assumption} on the potential $U$, we obtain
		\begin{equation*}
		\frac{1}{2} \langle \Delta U, q^2 \rangle \leq C(\beta) \langle 1+\|x\|^2, q^2 \rangle \leq C(\beta) \|(1+\|x\|^2) q \|_{L^1} \|q \|_{L^{\infty}}.
		\end{equation*}
		Next, we use the inequality
		\begin{equation}
		\|q\|^2_{L^\infty} \leq \|q\|_{L^2} \|\nabla q\|_{L^2}
		\label{eq:interpol_inequality_2}
		\end{equation}
		and deduce
		\begin{equation*}
		\begin{split}
		\frac{1}{2} \frac{d}{dt} \|q\|^2_{L^2} + \|\nabla q \|^2_{L^2} & \leq C(\beta) \|(1+\|x\|^2) q \|_{L^1} \|q\|_{L^\infty} \\
		& \leq C(\beta) \|(1+\|x\|^2) q \|_{L^1} \|q\|^{1/2}_{L^2} \| \nabla q\|^{1/2}_{L^2}.
		\end{split}
		\end{equation*}
		Using also $a b \leq \frac{1}{2} (\epsilon^2 a^2 + \epsilon^{-2} b^2)$ for $\epsilon$ small enough, we obtain 
		\begin{equation*}
		\frac{d}{dt} \|q\|^2_{L^2} + \|\nabla q\|^2_{L^2} \leq C(\beta) \|q\|^2_{L^2} + C(\beta) \|(1+\|x\|^2) q \|^2_{L^1}.
		\end{equation*}
		Using inequality \eqref{eq:interpol_inequality_2}, we also have
		\begin{equation*}
		\begin{split}
		\|q\|^2_{L^2} & \leq \|q\|_{L^1} \|q\|_{L^\infty} \\
		&  \leq \|q\|_{L^1} \|q\|^{1/2}_{L^2} \|\nabla q\|^{1/2}_{L^2},
		\end{split}
		\end{equation*}
		from which we deduce
		\begin{equation}\label{eq:interpol_inequality_3}
		\|q\|_{L^2} \leq \|q\|^{2/3}_{L^1} \|\nabla q\|^{1/3}_{L^1}.
		\end{equation}
		Putting everything together, we obtain the relationship
		\begin{equation*}
		\frac{d}{dt} \|q \|^2_{L^2} + \|\nabla q \|^2_{L^2} \leq C(\beta) \|(1+x^2) q \|_{L^1}.
		\end{equation*}
		Multiplying by $t^N$ for some $N \in \mathbb{N}$, using again inequality~\eqref{eq:interpol_inequality_3} and integrating with respect to time from $t$ to $t+1$, we conclude.
	\end{proof}
\end{lemma}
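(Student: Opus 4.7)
The plan is to derive an energy identity for $\|q(\cdot,t;\beta)\|_{L^2}^2$ by multiplying the non-autonomous Fokker--Planck equation~\eqref{eq:non autonomous FP gradient} by $q$ itself and integrating over $\mathbb{R}^d$. Integrating by parts on the diffusion term yields the dissipative contribution $-\|\nabla q\|_{L^2}^2$, while the drift term $\int q\,\nabla(\nabla U\, q)\,dx$ collapses, after one further integration by parts, to $\tfrac12\langle\Delta U,q^2\rangle$; boundary terms vanish thanks to the rapid spatial decay implicit in Lemma~\ref{lem:weighted_L1_estimate}. The outcome is the identity
$$\tfrac12\tfrac{d}{dt}\|q\|_{L^2}^2 + \|\nabla q\|_{L^2}^2 = \tfrac12\langle \Delta U,q^2\rangle.$$

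Next I would estimate the right-hand side. The dissipation hypothesis~\eqref{eq:dissipation_assumption} yields a polynomial bound of the form $|\Delta U(x,t;\beta)| \leq C(\beta)(1+\|x\|^2)$, so that
$$\tfrac12\langle\Delta U,q^2\rangle \leq C(\beta)\,\|(1+\|x\|^2)q\|_{L^1}\,\|q\|_{L^\infty},$$
where the weighted $L^1$-factor is controlled by Lemma~\ref{lem:loc_estimate}. To handle the $L^\infty$-factor I would invoke the Gagliardo--Nirenberg type inequality $\|q\|_{L^\infty}^2 \leq \|q\|_{L^2}\|\nabla q\|_{L^2}$ and then apply Young's inequality to absorb a small fraction of $\|\nabla q\|_{L^2}$ into the dissipative term on the left. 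This produces a differential inequality of the shape
$$\tfrac{d}{dt}\|q\|_{L^2}^2 + \|\nabla q\|_{L^2}^2 \leq C(\beta)\|q\|_{L^2}^2 + C(\beta)\|(1+\|x\|^2)q\|_{L^1}^2.$$

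The main obstacle is that the hypothesis $q_0\in L^1$ gives no control at all on $\|q(\cdot,0)\|_{L^2}$, so a direct Gr\"onwall argument is unavailable: one must capture the instantaneous smoothing from $L^1$ into $L^2$, which inevitably introduces a singularity as $t\to 0$. The standard remedy is to multiply through by $t^N$ for some integer $N$ and to study $t^N\|q\|_{L^2}^2$ instead. The additional term $Nt^{N-1}\|q\|_{L^2}^2$ produced by the product rule can be re-expressed via an interpolation such as $\|q\|_{L^2} \leq \|q\|_{L^1}^{2/3}\|\nabla q\|_{L^2}^{1/3}$ and then bounded using the basic $L^1$-estimate~\eqref{L1_estimate} together with the localization estimate~\eqref{L1_localization}, so that the right-hand side is at most $C(\beta)(1+t^N)\|q(\cdot,0;\beta)\|_{L^1}^2$.

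Integrating the resulting inequality over $[0,t+1]$ gives simultaneously a bound on $t^N\|q(\cdot,t;\beta)\|_{L^2}^2$ and on $\int_0^{t+1} s^N\|\nabla q(s)\|_{L^2}^2\,ds$. Restricting the latter integral to $[t,t+1]$, where $s^N \geq t^N$, and then dividing through by $t^N$, yields the claimed estimate~\eqref{first_smoothing_estimate}. The most delicate step will be choosing $N$ large enough that all interpolation exponents cooperate and tracking the $\beta$-dependence of the constants through the dissipation bound and Lemma~\ref{lem:loc_estimate}; the integral over $[t,t+1]$ appearing in the statement (rather than a pointwise bound on $\|\nabla q(t)\|_{L^2}^2$) is precisely what the energy method naturally produces, so no further refinement should be needed.
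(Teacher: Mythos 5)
Your proposal follows essentially the same route as the paper: the $L^2$-energy identity by testing the equation against $q$, the dissipation bound $\Delta U \lesssim 1+\|x\|^2$, the Gagliardo--Nirenberg interpolation $\|q\|_{L^\infty}^2 \leq \|q\|_{L^2}\|\nabla q\|_{L^2}$ with Young's inequality to absorb the gradient, the $L^1$--$L^2$--gradient interpolation $\|q\|_{L^2}\leq\|q\|_{L^1}^{2/3}\|\nabla q\|_{L^2}^{1/3}$, the multiplication by $t^N$ to handle the singularity from $q_0\in L^1$, and Lemmas~\ref{lem:L1_estimate} and~\ref{lem:loc_estimate} to control the resulting right-hand side. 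Your version of the final step (integrating $\frac{d}{ds}(s^N\|q\|_{L^2}^2)+s^N\|\nabla q\|_{L^2}^2\leq\text{RHS}$ over $[0,t+1]$, then restricting the gradient integral to $[t,t+1]$ where $s^N\geq t^N$ and dividing by $t^N$) is a slightly more explicit rendering of what the paper compresses into "integrating with respect to time from $t$ to $t+1$," and is the cleaner way to justify that both terms on the left of~\eqref{first_smoothing_estimate} are controlled simultaneously.
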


\begin{lemma}[Second smoothing estimate] \label{lem:second_smoothing_estimates}
	The unique weak solution of the non-autonomous Fokker Planck equation~\eqref{eq:non autonomous FP gradient} satisfies for any given $\beta \in C^{1/2}(\mathbb{R},\mathbb{R}^d)$
	\begin{equation}
	\|q(\cdot,t;\beta) \|_{H^1} \leq C(\beta) \frac{1+t^N}{t^N} \|q(\cdot,0;\beta) \|_{L^1}
	\label{second_smoothing_estimate}
	\end{equation}
	for some $N\ \in \mathbb{N}$ and some constant $C=C(\beta)<\infty$. 
	\begin{proof}
		Let us multiply the non-autonomous Fokker Planck equation~\eqref{eq:non autonomous FP gradient} by $\Delta q$ and integrate over $\mathbb{R}^d$:
		\begin{equation}\label{eq:fun_weak_sol_6}
		\underbrace{\int_{\mathbb{R}^d} \partial_t q \Delta q dx}_{a} = \underbrace{\int_{\mathbb{R}^d} \left(\Delta q \right)^2 dx}_{b} + \underbrace{\int_{\mathbb{R}^d} \Delta q \nabla (\nabla U q) dx}_{c}.
		\end{equation}
		Terms $(a)$ and $(b)$ in~\eqref{eq:fun_weak_sol_6} can be rewritten respectively as
		\begin{subequations}
			\begin{align*}
			\int_{\mathbb{R}^d} \partial_t q \Delta q dx & = - \frac{1}{2} \frac{d}{dt} \|\nabla q \|^2_{L^2} \\
			\int_{\mathbb{R}^d} \left(\Delta q \right)^2 dx & = \| \Delta q \|^2_{L^2}.
			\end{align*}
		\end{subequations}
		Regarding term $(c)$, using again integration by parts and vanishing at the boundary due to the dissipation condition, we deduce
		\begin{subequations}
			\begin{align*}
			\int_{\mathbb{R}^d} \Delta q \nabla (\nabla U q) dx & = - \int_{\mathbb{R}^d} \nabla q \nabla (\nabla (\nabla U q)) dx \\
			& = - \int_{\mathbb{R}^d} \nabla q \left(\nabla^3 U q + 2 \Delta U \nabla q + \nabla U \Delta q \right) dx \\
			& = - \int_{\mathbb{R}^d} \nabla q \nabla^3 U q dx - 2 \int_{\mathbb{R}^d} \nabla q \nabla q \Delta U dx - \int_{\mathbb{R}^d} \nabla q \nabla U \Delta q dx.
			\end{align*}
		\end{subequations}
		Using
		\begin{equation*}
		\int_{\mathbb{R}^d} \Delta U \nabla q \nabla q dx = - \int_{\mathbb{R}^d} q \nabla^3 U \nabla q  dx - \int_{\mathbb{R}^d} \Delta q \Delta U q dx,
		\end{equation*}
		we obtain
		\begin{equation*}
		\int_{\mathbb{R}^d} \Delta q \nabla (\nabla U q) dx = - \int_{\mathbb{R}^d} \Delta U \nabla q \nabla q dx + \int_{\mathbb{R}^d} \Delta q \Delta U q dx - \int_{\mathbb{R}^d} \nabla q \nabla U \Delta q dx.
		\end{equation*}
		We further notice that
		\begin{equation*}
		\int_{\mathbb{R}^d} \nabla q \Delta q \nabla U dx = - \frac{1}{2} \int_{\mathbb{R}^d} \nabla q \nabla q \Delta U dx.
		\end{equation*}
		Putting everything together, we find
		\begin{subequations}
			\begin{align*}
			\int_{\mathbb{R}^d} \nabla (\nabla U q) \Delta q dx & = - \frac{1}{2} \int_{\mathbb{R}^d} \Delta U \nabla q \nabla q dx + \int_{\mathbb{R}^d} \Delta q \Delta U q dx \\
			& = \frac{3}{2} \int_{\mathbb{R}^d} \Delta q \Delta U q dx + \frac{1}{2} \int_{\mathbb{R}^d} \nabla^3 U \nabla q q dx.
			\end{align*}
		\end{subequations}
		Concisely, the equality above can be written as
		\begin{equation*}
		\langle \nabla (\nabla U q), \Delta q \rangle_{L^2} = \frac{3}{2} \langle \Delta q, \Delta U q \rangle_{L^2} + \frac{1}{2} \langle \nabla^3 U \nabla q, q \rangle_{L^2}.
		\end{equation*}
		Therefore, using the inequality
		\begin{equation}\label{eq:interpol_inequality_4}
		\|\nabla q \|^2_{L^2} \leq \|\Delta q \|_{L^2} \|q\|^2_{L^2}
		\end{equation}
		we deduce
		\begin{equation*}
		\begin{split}
		|\langle \nabla (\nabla U q), \Delta q \rangle_{L^2}| & \leq \epsilon \| \Delta q \|^2_{L^2} + C(\beta) \| \nabla q \|^2_{L^2} + C(\beta) \| (1+x^2) q \|^2_{L^2} \\
		& \leq \epsilon \| \Delta q \|^2_{L^2} + C(\beta) \|\nabla q\|^2_{L^2} + C(\beta) \|(1+x^4) q \|_{L^1} \|q\|_{L^{\infty}} \\
		& \leq \epsilon \|\Delta q \|^2_{L^2} + C(\beta) \left( \|\nabla q \|^2_{L^2} + \|q\|^2_{L^2} + \|(1+x^4) q \|_{L^1} \|q\|^2_{L^\infty} \right) \\
		& \leq 2 \epsilon \|\Delta q \|^2_{L^2} + C(\beta) \left( \|\nabla q\|^2_{L^2} + \|q\|^2_{L^2} + \|(1+x^4) q \|_{L^1} \|q\|^2_{L^\infty} \right)
		\end{split}
		\end{equation*}
		for $\epsilon>0$ small enough and some positive constant $C=C(\beta)$. This implies
		\begin{equation*}
		\frac{d}{dt} \|\nabla q \|^2_{L^2} + \gamma \|\Delta q \|^2_{L^2} + \|\nabla q \|^2_{L^2} \leq C(\beta) \left( \|\nabla q\|^2_{L^2} + \|q\|^2_{L^2} + \|(1+x^4) q \|_{L^1} \|q\|_{L^\infty} \right),
		\end{equation*}
		for some constant $\gamma>0$. Multiplying by $t^N$ for some $N \in \mathbb{N}$, using again inequality~\eqref{eq:interpol_inequality_4}, integrating with respect to time and, finally, employing the first smoothing estimate~\eqref{first_smoothing_estimate}, we conclude.
	\end{proof}
\end{lemma}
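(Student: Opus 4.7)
The plan is to produce an $H^1$-type energy estimate by testing the non-autonomous Fokker-Planck equation \eqref{eq:non autonomous FP gradient} against $\Delta q$, controlling all lower-order terms through the potential's dissipation, and then upgrading it to handle $L^1$ initial data by means of a time-weighting trick combined with the first smoothing estimate \eqref{first_smoothing_estimate}. Concretely, I would multiply \eqref{eq:non autonomous FP gradient} by $\Delta q$ and integrate over $\mathbb{R}^d$. Thanks to the rapid decay of $q$ and its derivatives (which follows from the weighted $L^1$-estimate \eqref{weighted_L1_estimate} together with the first smoothing estimate and standard density arguments), all boundary contributions vanish. The time derivative term yields $-\tfrac12\tfrac{d}{dt}\|\nabla q\|_{L^2}^2$ after integration by parts in space; the diffusion term gives $\|\Delta q\|_{L^2}^2$.

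The main work is in handling the drift term $\int \Delta q\, \nabla(\nabla U\, q)\,dx$. I would expand $\nabla(\nabla U q) = \Delta U\, q + \nabla U\cdot \nabla q$ and then repeatedly integrate by parts, using the identity $\int \nabla U\cdot \nabla q\, \Delta q\,dx = -\tfrac12 \int \Delta U\, |\nabla q|^2 dx$ to produce a manageable expression involving only $\Delta U$, $\nabla^3 U$, together with $q$, $\nabla q$ and $\Delta q$. Applying Young's inequality $ab\leq \epsilon a^2 + C_\epsilon b^2$ with $\epsilon$ small, the highest-order contribution $\|\Delta q\|_{L^2}^2$ can be absorbed into the diffusion term on the left, leaving residual terms of the form $C(\beta)\bigl(\|\nabla q\|_{L^2}^2 + \|(1+|x|^k)q\|_{L^2}^2\bigr)$ for some $k$ (arising from polynomial growth bounds on $\Delta U$, $\nabla^3 U$ guaranteed by the dissipation assumption \eqref{eq:dissipation_assumption}).

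To handle the weighted $L^2$-type terms I would use the interpolation $\|q\|_{L^\infty}^2 \leq \|q\|_{L^2}\|\nabla q\|_{L^2}$ together with the weighted $L^1$-estimate \eqref{weighted_L1_estimate}, writing $\|(1+|x|^k) q\|_{L^2}^2 \leq \|(1+|x|^{2k})q\|_{L^1}\|q\|_{L^\infty}$ and absorbing the resulting $\|\nabla q\|_{L^2}^{1/2}$ factor with another Young's inequality. This produces a differential inequality of the form
\begin{equation*}
\frac{d}{dt}\|\nabla q\|_{L^2}^2 + \gamma \|\Delta q\|_{L^2}^2 \leq C(\beta)\bigl(\|\nabla q\|_{L^2}^2 + \|q\|_{L^2}^2 + \|(1+|x|^{2k})q\|_{L^1}^2\bigr)
\end{equation*}
with $\gamma>0$. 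Multiplying by $t^N$ for $N$ large enough (to dominate the blow-up of $\|\nabla q\|_{L^2}^2$ as $t\to 0^+$), integrating from $0$ to $t$, and using the first smoothing estimate \eqref{first_smoothing_estimate} on the right-hand side yields the bound $t^N\|\nabla q(t)\|_{L^2}^2 \leq C(\beta)(1+t^N)\|q_0\|_{L^1}^2$. Combined with \eqref{first_smoothing_estimate} for the $\|q\|_{L^2}$ part, this gives exactly \eqref{second_smoothing_estimate}.

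The most delicate step I anticipate is the careful algebraic bookkeeping in the drift-term integration by parts: one has to simultaneously produce a sign-definite leading term that can absorb into $\|\Delta q\|_{L^2}^2$ and avoid terms with uncontrollable derivatives of $U$, all while verifying that the polynomial growth imposed by \eqref{eq:dissipation_assumption} keeps the weights in a range covered by \eqref{weighted_L1_estimate}. Once that identity is correctly established, the rest is a standard time-weighted Grönwall bootstrap.
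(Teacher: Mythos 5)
Your proposal follows essentially the same line as the paper's proof: test against $\Delta q$, integrate by parts using the identity $\int \nabla U\cdot\nabla q\,\Delta q\,dx = -\tfrac12\int\Delta U\,|\nabla q|^2\,dx$ to reduce the drift term, absorb the $\|\Delta q\|_{L^2}^2$ contribution via Young's inequality, control the weighted residuals by interpolation against $\|q\|_{L^\infty}$ and the weighted $L^1$-estimate, and close with a $t^N$-weighted Gr\"onwall argument together with the first smoothing estimate. The small notational differences (e.g.\ the order of the integration by parts and the precise weight $|x|^{2k}$ vs.\ $x^4$) do not affect the argument.
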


Finally, we are ready to prove the main theorem of this section. Having established existence, uniqueness and all estimates above for signed measures, we now restrict ourselves to probability measures. We denote by $H^k:=W^{k,2}$ the Hilbert space of all functions $f \in L^2$ such that their weak derivatives up to order $k$ have finite $L^2$ norm. We further denote by $L^1_{1+x^n}$, for any $n \in \mathbb{N}$, the weighted space of measurable functions $f$ such that
\begin{equation*}
\|f\|_{L^1_{1+x^n}} := \int_{\mathbb{R}^d} (1+x^n) \|f(x)\| dx < \infty.
\end{equation*}

\begin{proof}[Proof of Theorem~\ref{thm:existence_uniqueness_regularity}]
Combining the $L^1$ estimates \eqref{L1_estimate}, \eqref{weighted_L1_estimate}, \eqref{L1_localization} and the smoothing estimates \eqref{first_smoothing_estimate}, \eqref{second_smoothing_estimate}, we conclude that for any given $\beta \in C^{1/2}(\mathbb{R},\mathbb{R}^d)$, $q(t) \in L^1_{(1+x^n)} \cap H^1$ for any $n \in \mathbb{N}$ and $t>0$. In order to gain more regularity, we differentiate the Fokker-Planck equation~\eqref{eq:non autonomous FP gradient} with respect to the space variable $x$ iteratively, via a standard bootstrapping procedure. After one step, we obtain $\partial_x q(t) \in L^1_{(1+x^n)} \cap H^1$, which implies in particular $q(t) \in H^2$ for $t>0$. After two steps, we obtain $\partial^2_x q(t) \in L^1_{(1+x^n)} \cap H^1$, which implies $q(t) \in H^3$ for $t>0$ and so on. This shows that, at any time $t>0$, $q(t) \in H^{\infty}$ and any spatial derivative $\partial_x^m q(t)$ decays faster than any polynomial as $|x| \to \infty$, at any time $t>0$. Finally, we recall that the Sobolev space $H^s(\mathbb{R}^d)$ can be continuously embedded into $C^k(\mathbb{R}^d)$ for any $k \in \mathbb{N}$ and $s>k+\frac{n}{2}$ \cite{cerda2010linear}. This readily implies $H^\infty$ can be continuously embedded in $C^{\infty}$. Putting everything together, we conclude that $q(t)$ belongs to the Schwarz space $\mathcal{S}$ for any time $t>0$.
\end{proof}

\section{Proofs of Propositions~\ref{prop: exp_contraction_kantorovich} and ~\ref{prop:convergence_Wf_L1}} \label{app:contraction_non_auto_FP_proof}

\begin{lemma}[Dissipation condition and strict convexity]\label{lem:dissipation_condition_strict_convexity}
Assume the potential $V$ satisfies the dissipation condition~\eqref{eq:dissipation_assumption}, that is,
\begin{equation*}
\nabla V(x) \cdot x \|x\|^2 \geq \frac{1}{2} \|x\|^6 - C
\end{equation*}
for some $C>0$. Then $V$ is strictly convex outside a given ball in $\mathbb{R}^d$.
\begin{proof}
We rewrite the dissipation condition as
\begin{equation}
\label{eq:fun_grad_potential}
\nabla V(x) = M x \|x\|^2 + h(x),
\end{equation}
with $M \geq \frac{1}{2}, h(x) = O(\|x\|^2)$ and such that
\begin{equation*}
f(x) := \left(M-\frac{1}{2}\right) \|x\|^6 + h(x) \cdot x \|x\|^2
\end{equation*}
is lower bounded. Differentiating~\eqref{eq:fun_grad_potential} yields
\begin{equation*}
\Delta V(x) = 3 M \|x\|^2 + g(x),
\end{equation*}
with $g(x) = \nabla \cdot h(x) = O(\|x\|)$, meaning that there exists $L>0$ and $x_0 \in \mathbb{R}$ such that for any $\|x\| \geq x_0$
\begin{equation*}
|g(x)| \leq L \|x\|.
\end{equation*}
Then, letting $N:=3 M$, we have
\begin{equation*}
\Delta V (x) = N \|x\|^2 + g(x) \geq N \|x\|^2 - L \|x\|
\end{equation*}
and so the potential $V$ is strictly convex outside the ball centered in $\frac{LN}{2}$ with radius $\frac{LN}{2}$.
\end{proof}
\end{lemma}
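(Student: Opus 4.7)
The plan is to extract a leading polynomial behaviour for $\nabla V$ from the dissipation condition and then differentiate to obtain the dominant term of the Hessian. The scalar growth bound on $\nabla V \cdot x$ will be converted into a matrix positivity statement by controlling the remainder term.

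First I would rewrite the dissipation condition in a more transparent form: for $x\neq 0$, dividing by $\|x\|^2$ gives $\nabla V(x)\cdot x \geq \tfrac12\|x\|^4 - C/\|x\|^2$, so in the radial direction $\nabla V$ must grow at least cubically at infinity. Combined with the smoothness of $V$, this motivates a decomposition of the form $\nabla V(x) = M\, x\,\|x\|^2 + h(x)$ with $M\geq \tfrac12$ and $h$ a remainder whose contribution to the left-hand side of the dissipation inequality is absorbed into the constant $C$ for $\|x\|$ large; concretely one asks $h(x)=O(\|x\|^2)$ so that $h(x)\cdot x\,\|x\|^2$ is dominated by the cubic leading term.

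Next I would differentiate this expression for $\nabla V$ to read off
\[
\nabla^2 V(x) \;=\; M\bigl(\|x\|^2 I + 2\,x x^{\top}\bigr) + \nabla h(x).
\]
The leading matrix $M(\|x\|^2 I + 2xx^{\top})$ is positive definite with smallest eigenvalue $M\|x\|^2$ in the directions orthogonal to $x$ and eigenvalue $3M\|x\|^2$ in the radial direction, while the derivative of the remainder satisfies $\nabla h(x) = O(\|x\|)$. Hence for every unit vector $v\in\mathbb{R}^d$ one obtains $v^{\top}\nabla^2 V(x) v \geq M\|x\|^2 - L\|x\|$ for some $L>0$, which is strictly positive once $\|x\| > L/M$. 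This yields strict convexity of $V$ outside a ball of radius depending on $L$ and $M$.

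The main obstacle, and the place where the argument requires care, is the jump from the purely scalar bound on $\nabla V\cdot x$ to an inequality on the whole Hessian: the dissipation condition controls only the radial projection of $\nabla V$ and says nothing a priori about tangential components. This is precisely what forces the structural ansatz $\nabla V = M x\|x\|^2 + h$ with a controlled lower-order remainder $h$; without this (or an analogous assumption on the growth of $V$), the Hessian need not be positive definite even when the radial derivative grows as required. A secondary subtlety is that the authors' formulation uses $\Delta V$ rather than the full Hessian, so I would be careful to argue with the bilinear form $v^{\top}\nabla^2 V(x) v$ directly to genuinely obtain strict convexity rather than merely a positive Laplacian.
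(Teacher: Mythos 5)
You follow the same outline as the paper (decompose $\nabla V(x) = Mx\|x\|^2 + h(x)$, differentiate, bound the remainder), but your decision to work with the full Hessian $\nabla^2 V$ rather than the Laplacian $\Delta V$ is not merely a subtlety you flagged --- it is a genuine correction of the paper's argument. In $\mathbb{R}^d$ with $d>1$, $\Delta V>0$ does not imply convexity, and in fact the paper's intermediate identity $\nabla\cdot\bigl(x\|x\|^2\bigr) = 3\|x\|^2$ is the one-dimensional formula (the general one is $(d+2)\|x\|^2$), so the paper's computation is essentially a $d=1$ argument. Your formula $\nabla^2 V = M\bigl(\|x\|^2 I + 2\,xx^\top\bigr) + Dh$, with eigenvalue lower bound $M\|x\|^2 - L\|x\|$ once $\|Dh(x)\|\leq L\|x\|$, is the correct $d$-dimensional version and delivers the stated conclusion.

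That said, the obstacle you flag is a real gap in both the paper's proof and your own. The dissipation condition bounds only the radial projection $\nabla V(x)\cdot x$ from below; it does not imply that $h(x):=\nabla V(x) - \tfrac12 x\|x\|^2$ is $O(\|x\|^2)$ as a vector, since tangential components of $\nabla V$ are entirely unconstrained, so the paper's phrase ``we rewrite the dissipation condition as'' hides an additional structural assumption. Moreover, even granting $h=O(\|x\|^2)$, the inference $Dh = O(\|x\|)$ (or $\nabla\cdot h = O(\|x\|)$ in the paper) is unjustified: one cannot differentiate an $O$-bound. This is not cosmetic. Already in $d=1$ one can take $V'(x) = x^3 + x^{-1}\sin(x^4)$ for $|x|$ large, which satisfies $V'(x)x^3 = x^6 + x^2\sin(x^4)\geq \tfrac12 x^6 - C$, yet $V''(x) = 3x^2 + 4x^2\cos(x^4) - x^{-2}\sin(x^4)$ changes sign on every neighbourhood of infinity, so $V$ is not eventually convex. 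The lemma as stated therefore needs an explicit additional hypothesis controlling $\nabla^2 V$ (equivalently $Dh$), which both proofs are implicitly assuming; your write-up is the more honest of the two in identifying precisely where that assumption enters.
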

We illustrate this result in the context of the example of Section~\ref{subsubsec:double_well}. 
\begin{example}[Double well potential]
We consider the one-dimensional double-well potential $V'(x) = x(x^2-a)$, with $a>0$. Then, the dissipation condition~\eqref{eq:dissipation_assumption} is fulfilled if and only if
\begin{equation*}
x^4 \left(\frac{1}{2} x^2 - a \right) \geq - C
\end{equation*}
for some constant $C>0$. Let $f(x) := \frac{1}{2} x^6 - x^4 a$. Then, we set
\begin{equation*}
f'(x) = 3 x^5 - 4 a x^3 = x^3 (3 x^2 - 4 a) = 0
\end{equation*}
and we find the local extrema $x=0$ and $x \pm \sqrt{\frac{4 a}{3}}$. Hence, $f(0) = 0$ and
\begin{equation*}
f \left(\pm \sqrt{\frac{4 a}{3}} \right) = a^3 \left(-\frac{16}{27} \right) \geq - C
\end{equation*}
for any constant $C \geq \frac{16}{27} a^3$. The dissipation condition is therefore satisfied. Moreover, 
\begin{equation*}
V''(x) = 3 x^2 -a > 0 \iff x^2 > \frac{a}{3}
\end{equation*}
and so we immediately deduce the potential $V$ is strictly convex outside the ball centred at $0$ with radius $\sqrt{\frac{a}{3}}$. 
\end{example}
The converse implication of Lemma~\eqref{lem:dissipation_condition_strict_convexity} does not hold, as the following example shows.
\begin{example}[Strict convexity outside a ball does not imply the dissipation condition]
We consider the one-dimensional potential
\begin{equation*}
V(x) = \frac{x^4}{8} - a \frac{x^2}{2}, \ \ a>0.
\end{equation*}
Then,
\begin{equation*}
\begin{split}
V'(x) & = \frac{x^3}{2} - a x = x \left(\frac{x^2}{2} - a \right) \\
V''(x) & = \frac{3}{2} x^2 -a > 0 \iff x^2 > \frac{2 a}{3}.
\end{split}
\end{equation*}
We immediately see that $V$ is strictly convex outside the ball centred at $0$ with radius $\sqrt{\frac{2 a}{3}}$. However, the dissipation condition reads as
\begin{equation*}
V'(x) x^3 = \frac{1}{2} x^6 - a x^4 \geq \frac{1}{2} x^6 - C
\end{equation*}
which is equivalent to $C \geq a x^4$ for some positive constant $C$ and this is clearly not possible.
\end{example}

\begin{proof}[Proof of Proposition~\ref{prop: exp_contraction_kantorovich}]
We extend results by Eberle \cite{eberle2016reflection} to a non-autonomous setting. Consider the difference process $z(t):= x(t) - y(t)$. Then, 
\begin{equation*}
d z(t) = (\nabla V(y) - \nabla V(x)) dt + 2 |\sigma^{-1} z(t)|^{-1} z(t) d \tilde{W}(t), \ \ t<T
\end{equation*}
and $z(t) =0$ for $t \geq T$, where
\begin{equation*}
\tilde{W}(t):= \int_0^t e^\top(s) d W(s)
\end{equation*}
is a new Brownian motion by Levy's characterization, $e(t)$ is the unit vector defined by
\begin{equation*}
e(t):=\frac{\sigma^{-1}(x(t)-y(t))}{|\sigma^{-1}(x(t)-y(t))|}
\end{equation*}
and $T$ is the coupling time. Next, we define $r(t):= \|z(t)\|:=|\sigma^{-1} z(t)|$. By application of Ito's formula we find
\begin{equation*}
d r(t) = 2 |\sigma^{-1} z(t) |^{-1} r(t) d \tilde{W}(t) + r^{-1}(t) z(t) (\sigma \sigma^T)^{-1} (-\nabla V(x(t)) + \nabla V(y(t))) dt.
\end{equation*}
Given a smooth function $f \in C^1(\mathbb{R}^d)$, this implies
\begin{equation}\label{eq:dynamics_f_r}
\begin{split}
d f(r(t)) & = 2 |\sigma^{-1} z(t)|^{-1} r(t) f'(r(t)) d \tilde{W}(t) \\
& \quad + r^{-1}(t) z(t) (\sigma \sigma^T)^{-1}(-\nabla V(x(t)) + \nabla V(y(t))) f'(r(t)) dt \\
& \quad \quad + 2 |\sigma^{-1} z(t)|^{-2} r^2(t) f''(r(t)) dt.
\end{split}
\end{equation}
We also define for any $r>0$ the function
\begin{equation}\label{eq:function_k}
\begin{split}
k(r) & := \inf_{x,y\in\mathbb{R}^d,~\|x-y\|=r} \left\{ -2 \frac{|\sigma^{-1} (x-y)|^2}{\|x-y\|^2} \frac{(x-y) \cdot (\sigma \sigma^T)^{-1} (-\nabla V(x) + \nabla V(y))}{\|x-y\|^2} \right\} \\
& = \inf_{x,y\in\mathbb{R}^d,~\|x-y\|=r} \left\{ -2 \frac{(x-y) \cdot (\sigma \sigma^T)^{-1} (-\nabla V(x) + \nabla V(y))}{\|x-y\|^2} \right\}.
\end{split}
\end{equation}
Indeed, $k(r)$ is the largest positive real number such that
\begin{equation*}
(x-y) \cdot (\sigma \sigma^T)^{-1} (-\nabla V(x) + \nabla V(y)) \leq -\frac{1}{2} k(r) \|x-y\|^2
\end{equation*}
for any $x,y \in \mathbb{R}^d$ with $\|x-y\|=r$. Let us denote by $m(t)$ the drift on the right hand side of~\eqref{eq:dynamics_f_r}. By definition of $k$, 
\begin{equation*}
m(t) \leq \Gamma(t) := 2 |\sigma^{-1} z(t)|^{-2} r^2(t) \cdot \left(f''(r(t)) - \frac{1}{4} k(r(t)) f'(r(t))\right).
\end{equation*}
Hence, the process $e^{ct} f(r(t))$ is a supermartingale for $t<T$ if $\Gamma(t) \leq - c f(r(t))$. We aim to find a constant $c$ and function $f$ such that this inequality holds. Let
\begin{equation*}
\alpha:= \sup \left\{|\sigma^{-1}z|^2 : z \in \mathbb{R}^d \ \ \text{with}\ \ \|z\|=1 \right\}.
\end{equation*}
Since for any $z \in \mathbb{R}^d$
\begin{equation*}
|\sigma^{-1} z |^2 \leq \alpha \|z\|^2,
\end{equation*}
it suffices for $f$ to satisfy
\begin{equation}
f''(r) - \frac{1}{4} r k(r) f'(r) \leq - \frac{\alpha c}{2} f(r)
\label{eq:suff_condition_f_second}
\end{equation}
for all $r>0$, cf.~\cite[eq. 63]{eberle2016reflection}. We observe this equation holds with $c=0$ in case 
\begin{equation*}
f'(r) = \varphi(r) := \exp\left(-\frac{1}{4} \int_0^r s k^{-}(s) ds\right),
\end{equation*}
where $k^{-}:=\max \left\{-k,0\right\}$ denotes the negative part of the function $k$. Next, following \cite{eberle2016reflection},  we make the ansatz
\begin{equation}
f'(r) = \varphi(r) g(r),
\label{eq:ansatz}
\end{equation}
where $g \geq \frac{1}{2}$ is a decreasing absolutely continuous function satisfying $g(0)=1$. Notice that the condition $g \geq 0$ is necessary to ensure that $f$ is non-decreasing. The condition
\begin{equation*}
\frac{1}{2} \geq g \geq 1
\end{equation*}
ensures
\begin{equation*}
\frac{\varPhi}{2} \leq f \leq \varPhi, \ \ \varPhi(r):= \int_0^r \varphi(s) ds.
\end{equation*}
The ansatz \eqref{eq:ansatz} yields
\begin{equation*}
f''(r) = -\frac{1}{4} k^{-}(r) f(r) + \varphi(r) g(r) \leq \frac{1}{4} r k(r) f(r) + \varphi(r) g'(r).
\end{equation*}
In turn, condition \eqref{eq:suff_condition_f_second} is satisfied if
\begin{equation}
g'(r) \leq -\frac{\alpha c}{2} \frac{f(r)}{\varphi(r)}.
\label{eq:condition_g_prime}
\end{equation}
Next, we define two constants $R_0,R_1 \geq 0$, with $R_0 \leq R_1$:
\begin{align*}
R_0 & := \inf \left\{R \geq 0: k(r) \geq 0, \forall  r \geq R \right\} \\
R_1 & := \inf \left\{R \geq R_0: k(r) R (R-R_0) \geq 8, \forall r \geq R \right\}.
\end{align*}
As remarked in~\cite{eberle2016reflection}, we can rewrite $k$ as
\begin{equation*}
k(r) = \inf \left\{2 \int_0^1 \partial^2_{(x-y)/|x-y|} (\sigma \sigma^\top)^{-1} V((1-t)x + ty) dt: x,y \in \mathbb{R}^d s.t. |x-y|=r \right\}.
\end{equation*}
Thanks to lemma~\ref{lem:dissipation_condition_strict_convexity}, the potential $V$ is strictly convex outside a given ball in $\mathbb{R}^d$ and this, in turn, ensures $k$ is continuous on $(0,\infty)$ and such that
\begin{equation*}
\lim_{r \to +\infty} \inf k(r) >0, \ \ \ \int_0^1 r k^{-}(r) dr < \infty.
\end{equation*}
Thanks to this result, both constants $R_0,R_1$ are finite. For $r \geq R_1$, condition \eqref{eq:suff_condition_f_second} is satisfied since $k$ is sufficiently positive. It is then enough to assume condition \eqref{eq:condition_g_prime} holds on the open interval $(0,R_1)$. Under this assumption,
\begin{equation}
g(R_1) \leq 1 - \frac{\alpha c}{2} \int_0^{R_1} f(s) \varphi^{-1}(s) ds \leq 1 - \frac{\alpha c}{4} \int_0^{R_1} \varPhi(s) \varphi^{-1}(s) ds.
\label{eq:condition_g_R1}
\end{equation}
Condition \eqref{eq:condition_g_R1}, in turn, is satisfied if
\begin{equation*}
\alpha c \leq \frac{2}{\int_0^{R_1} \varPhi(s) \varphi^{-1}(s) ds}.
\end{equation*}
So, by choosing, for $r<R_1$,
\begin{equation*}
g'(r) = - \frac{\varPhi(r)}{2 \varphi(r)} \bigg/ \int_0^{R_1} \frac{\varPhi(s)}{\varphi(s)} ds,
\end{equation*}	
condition \eqref{eq:condition_g_prime} is fulfilled if we choose the constant as
\begin{equation*}
\alpha c = 1 \big/ \int_0^{R_1} \varPhi(s) \varphi^{-1}(s) ds.
\end{equation*}
At this point, we can show that the quantity $\Gamma$ is smaller than $-c f(r)$, with our choices of $f$ and $c$. Consider the scenario $r<R_1$. Then, we have (see~\cite[eq. (68)]{eberle2016reflection})
\begin{equation}
f''(r) \leq \frac{1}{4} r k(r) f'(r) - \frac{1}{2} f(r) \bigg/ \int_0^{R_1} \varPhi(s) \varphi^{-1}(s) ds.
\label{eq:f_second_a}
\end{equation}
Consider, now, the scenario $r \geq R_0$. Then, 
\begin{equation*}
f'(r) = \frac{\varphi(r)}{2} = \frac{\varphi(R_0)}{2},
\end{equation*}
and $k(r) R_1 (R_1 - R_0) \geq 8$ by construction of $R_1$. Moreover, we know that $r \geq R_0$, the function $\varphi$ is constant and, therefore $\varPhi(r) = \varPhi(R_0) + (r-R_0) \varphi(R_0)$. Also,
\begin{equation*}
\int_{R_0}^{R_1} \varPhi(s) \varphi^{-1}(s) ds \geq (R_1-R_0) \varPhi(R_1) \varphi^{-1}(R_0)/2.
\end{equation*}
This implies (see~\cite[eq. (69)]{eberle2016reflection})
\begin{equation}
f''(r) - \frac{1}{4} r k(r) f'(r) \leq - \frac{1}{2} f(r) \bigg/ \int_0^{R_1} \varPhi(s) \varphi^{-1}(s) ds.
\label{eq:f_second_b}
\end{equation}
Putting together equations \eqref{eq:f_second_a} and \eqref{eq:f_second_b}, we conclude the key relationship
\begin{equation*}
\Gamma(t) \leq - c f(r(t))
\end{equation*}
at all times $t<T$. For any coupling $\gamma_t$ of the process $(x(t),y(t))$, we take the expectation on both sides of~\eqref{eq:dynamics_f_r} and obtain
\begin{equation}\label{eq:ode_expectation}
\mathbb{E}^{\gamma_t}[f(r(t))] = \mathbb{E}^{\gamma_t}[f(r(s))] + \int_s^t \mathbb{E}^{\gamma_t}[m(u)] du
\end{equation}
for any $s \leq t<T$. Let $\Upsilon(t):=\mathbb{E}^{\gamma_t}[f(r(t))]$. Then, differentiating~\eqref{eq:ode_expectation} with respect to time yields
\begin{equation*}
\Upsilon'(t) = \mathbb{E}^{\gamma_t}[m(t)].
\end{equation*}
Since we have proved that $m(t) \leq \Gamma(t) \leq - c f(r(t))$ for $t<T$, we deduce
\begin{equation}
\Upsilon'(t) \leq - c \Upsilon(t).
\end{equation}
Thanks to standard Gronwall's lemma, we deduce
\begin{equation}
\Upsilon(t) \leq \Upsilon(s) e^{-c(t-s)}
\end{equation}
for all $s \leq t \leq T$. Hence, $t \to e^{ct} \mathbb{E}^{\gamma_t}[d_f(x(t),y(t))]$ is a decreasing function of time. This key result implies
\begin{align*}
\mathcal{W}_{f}(\mu_{t,\beta}, \nu_{t,\beta}) & \leq \mathbb{E}^{\gamma_{t,\beta}}[d_f (x(t), y(t))] \leq e^{-ct} \mathbb{E}^{\gamma_{t,\beta}}[d_f (x_0, y_0)],
\end{align*}
where $\mu_{t,\beta}$ and $\nu_{t,\beta}$ denote the time-$t$ evolved probability measures of the process $x(t)$ with respect to the initial distributions $\mu$ and $\nu$ respectively, and $\gamma_{t,\beta}$ denotes their coupling, given a realization of $\beta$. Taking the infimum over all couplings $\gamma_{t,\beta}$, we conclude.
\end{proof}

\begin{proof}[Proof of Proposition~\ref{prop:convergence_Wf_L1}]
By construction, the function $f$ in Proposition~\ref{prop: exp_contraction_kantorovich} is concave, increasing and satisfies $f(0)=1, f'(0)=1$. This implies that $f'(x) x \leq f(x) \leq x$. Moreover, $\frac{\varphi(R_0)}{2} \leq f' \leq 1$ thanks to the properties of $\varphi$ and $g$. Hence,
\[
\frac{\varphi(R_0)}{2} \|x-y\| \leq d_f(x,y) \leq \|x-y\|
\]
for any $x,y \in \mathbb{R}^d$. For any coupling $\gamma_{t,\beta}$ of $\mu_{t,\beta}$ and $\nu_{t,\beta}$, 
\begin{equation*}
\frac{\varphi(R_0)}{2} \mathbb{E}^{\gamma_{t,\beta}}[\|x(t) - y(t)\|]  \leq \mathbb{E}^{\gamma_{t,\beta}}[d_f(x(t),y(t))] \leq e^{-ct} \mathbb{E}^{\gamma_{t,\beta}}[d_f(x_0,y_0)] \leq e^{-ct} \mathbb{E}^{\gamma_{t,\beta}}[\|x_0-y_0\|].
\end{equation*}
Let $K:=2 \varphi(R_0)^{-1}$. Taking the infimum over all couplings $\gamma_{t,\beta}$ yields
\begin{equation*}
K \mathcal{W}^1(\mu_{t,\beta}, \mu_{t+\tau,\beta}) \leq \mathcal{W}_f(\mu_{t,\beta}, \mu_{t+\tau,\beta})
\end{equation*}
for all $t>0$. Hence, if $(\mu_{t,\beta})_{t>0}$ is a Cauchy sequence with respect to $\mathcal{W}_f$, it will be a Cauchy sequence with respect to $\mathcal{W}^1$ as well. Moreover, with $p_{t,\beta}$ denoting the Lebesgue density of the measure $\mu_{t,\beta}$, the Hardy-Landau-Littlewood inequality  \cite{bogachev2015lower,bogachev2015estimates,bogachev2021sobolev} entails
\begin{equation*}
\|p_{t+\tau,\beta} - p_{t,\beta}\|^2_{L^1} \leq C \|\nabla (p_{t+\tau,\beta} - p_{t,\beta})\|_{L^1} \mathcal{W}^1(\mu_{t,\beta},\mu_{t+\tau,\beta})
\end{equation*}
for some constant $C>0$. Since the $L^1$ norm of the gradient is bounded (see Appendix~\ref{app:non_auto_FP_IVP_proof}, Lemma~\ref{lem:second_smoothing_estimates}), we have
\begin{equation*}
\|p_{t+\tau,\beta} - p_{t,\beta}\|^2_{L^1} \leq \bar{C} \mathcal{W}^1(\mu_{t,\beta},\mu_{t+\tau,\beta})
\end{equation*}
for some constant $\bar{C}>0$. Hence, $(p_{t,\beta})_{t>0}$ is a Cauchy sequence in $L^1$. Since $L^1$ is complete, the sequence converges in $L^1$, that is, for any initial condition $\mu \in \mathcal{P}(\mathbb{R}^d)$,
\begin{equation*}
p_\beta = \lim_{t \to \infty} \Phi(t,\beta)p_\mu \in L^1.
\end{equation*}	
\end{proof}

\section{Proofs of results in Section~\ref{sec:from_nonauto_to_stoch}} \label{app:from_nonauto_to_stoch_proof}

\begin{proof}[Proof of Proposition~\ref{prop:SFPE_RDS}]
	In the stochastic setting, the non-autonomous Fokker-Planck equation~\eqref{eq:non autonomous FP gradient} naturally extends to a \emph{random} Fokker Planck equation in terms of common noise sample paths $\beta$, which we here write in compact form as
	\begin{equation}\label{eq:random_FP}
	\partial_t q = F(\theta_t \beta,q)
	\end{equation}
	for some appropriate functional $F$. Subsequently, in analogy to the discussion in Section~\ref{subsec:derivation_non_auto_FP}, the stochastic Fokker Planck equation~\eqref{stochasticFP} is obtained via the transformation $y=x-\eta \beta$, yielding the analogous form when choosing the stochastic integral to be of Stratonovich type. The cocycle property is obtained from the existence and uniqueness of solutions of \eqref{eq:non autonomous FP gradient} for almost all sample paths, as established in Section~\ref{sec:non_auto_FP_IVP}. The evolution operator $\Phi$ of the random Fokker Planck equation~\eqref{eq:random_FP} is given by  
	\begin{equation*}
	\Phi(t,\beta,q) = q + \int_0^t F(\theta_s \beta, \Phi(s,\beta,q)) ds.
	\end{equation*}
	Following closely the argument in Arnold \cite[Proof of Theorem 2.2.1]{arnold1995random}, we prove the cocycle property (for almost all $\beta \in \Omega_B$). Let $s,t \in \mathbb{R}$ and assume $s>0,t>0$ (the remaining cases are analogous). Then, 
	\begin{equation*}
	\begin{split}
	\Phi(t,\theta_s \beta, \Phi(s,\beta,q)) & = \Phi(s,\beta,q) + \int_0^t F(\theta_{u+s} \beta, \Phi(u,\theta_s \beta, \Phi(s,\beta,q))) du \\
	& = q + \int_0^t F(\theta_s \beta, \Phi(s,\beta,q)) ds \\
	& \quad + \int_{s}^{t+s} F(\theta_z \beta, \Phi(z-s, \theta_s \beta, \Phi(s,\beta,q))) dz,
	\end{split}
	\end{equation*}
	where
	$z=u+s$. Therefore, the function
	\begin{equation*}
	\tilde{\Phi}(u,\beta,q) := \begin{cases}
	\Phi(u,\beta,q) &\text{if $0 \leq u \leq s$}\\
	\Phi(u-s,\theta_s \beta, \Phi(s,\beta,q)) &\text{if $s \leq u \leq s+t$}
	\end{cases}
	\end{equation*}
	satisfies
	\begin{equation*}
	\tilde{\Phi}(t+s,\beta,q) = q + \int_0^{t+s} F(\theta_s \beta, \tilde{\Phi}(u,\beta,q)) du.
	\end{equation*}
	By uniqueness, for $\mathbb{P}$-a.e. $\beta \in \Omega_B$,
	\begin{equation*}
	\Phi(t+s,\beta,q)  = \tilde{\Phi}(t+s,\beta,q) = \Phi(t,\theta_s \beta, \tilde{\Phi}(s,\beta,q)).
	\end{equation*}
\end{proof}

\begin{proof}[Proof of Theorem~\ref{thm:pullback_attractor}]
	From proposition~\ref{prop: exp_contraction_kantorovich} we deduce that there exist a constant $c>0$ and an increasing and convex function $f$ such that for any $t>0, \beta \in C^{1/2}(\mathbb{R},\mathbb{R}^d)$ and initial probability measures $\mu,\nu \in \mathcal{M}(\mathbb{R}^d)$,
	\begin{equation*}
	\mathcal{W}_{f}(\mu_{t,\beta} \nu_{t,\beta}) \leq e^{-c t} \mathcal{W}_{f}(\mu,\nu), 
	\end{equation*}
	with $\mu_{t,\beta}:=\Psi(t,\beta)\mu$ and similarly for $\nu_{t,\beta}$, where $\Psi$ denotes the time-$t$ evolution operator for the measure $\mu$, associated to the time-$t$ evolution operator $\Phi$ of the random Fokker Planck equation~\ref{eq:random_FP}. We show that $(\mu_{t,\beta})_{t>0}$ is a Cauchy sequence in a pullback sense with respect to the $\mathcal{W}_{f}$ metric, that is, $\forall\ \epsilon>0\  \exists\ t>0: \forall\ \tau>0$
	\begin{equation*}
	\mathcal{W}_{f}\left(\Psi(t, \theta_{-t} \beta) \mu,\Psi(t+\tau, \theta_{-(t+\tau)} \beta) \mu \right) < \epsilon.
	\end{equation*}
	Exploiting the pullback operator and the fact that we have a contraction, we deduce
	\begin{align*}
	\mathcal{W}_{f}\left(\Psi(t, \theta_{-t} \beta) \mu,\Psi(t+\tau, \theta_{-(t+\tau)} \beta) \mu \right) & = \mathcal{W}_{f} \left(\Psi(t,\theta_{-t} \beta)\mu, \Psi(t,\theta_{-t}\beta) \circ \Psi(\tau, \theta_{-(t+\tau)}\beta) \mu \right) \\
	& \leq e^{-ct} \mathcal{W}_f \left(\mu, \Psi(\tau,\theta_{-(t+\tau)}\beta) \mu\right).
	\end{align*}
	Then, since $f$ is concave and increasing by construction, 
	\begin{equation*}
	\mathcal{W}_f(\mu,\Psi(\tau,\theta_{-(t+\tau)}\beta) \mu) \leq f(W^1(\mu,\Psi(\tau,\theta_{-(t+\tau)}\beta) \mu)) \leq W^1(\mu,\Psi(\tau,\theta_{-(t+\tau)}\beta) \mu).
	\end{equation*}
	We observe that $C^{1/2}(\mathbb{R},\mathbb{R}^d)$ is a subset of full Wiener measure $\mathbb{P}_\beta$ of the sample path space  $\Omega_B$. Taking the expectation $\mathbb{E}^{\mathbb{P}_\beta}$ with respect to $\mathbb{P}_\beta$ implies
	\begin{equation*}
	\mathbb{E}^{\mathbb{P}_\beta}[\mathcal{W}_f(\mu,\Psi(\tau,\theta_{-(t+\tau)}\beta) \mu)] \leq \mathbb{E}^{\mathbb{P}_\beta}[W^1(\mu,\Psi(\tau,\theta_{-(t+\tau)}\beta) \mu)]
	\end{equation*}
	Let $p_\mu$ and $p_{\tau,t+\tau;\beta}$ denote the Lebesgue densities of $\mu$ and $\Psi(\tau,\theta_{-(t+\tau)}\beta) \mu$ respectively. Their product will be the density of the product measure, which is a simple example of a coupling measure. Therefore,
	\begin{align*}
	\mathbb{E}^{\mathbb{P}_\beta}[W^1(\mu,\Psi(\tau,\theta_{-(t+\tau)}\beta) \mu)] & \leq \mathbb{E}^{\mathbb{P}_\beta} \left[\iint_{\mathbb{R}^{2d}} \|x-y\| p_{\tau,t+\tau;\beta}(x) p_\mu(y) dx dy \right] \\
	& = \iint_{\mathbb{R}^{2d}} \|x-y\| \mathbb{E}^{\mathbb{P}_\beta}[p_{\tau,t+\tau;\beta}(x)] p_\mu(y) dx dy.
	\end{align*}
	Let us define $p_\tau:=\mathbb{E}^{\mathbb{P}_\beta}[p_{\tau,t+\tau;\beta}]$. We notice that this expectation does not depend on $t$ since we are integrating over all $\Omega_B$ and $\theta_{-(t+\tau)} \beta = \theta_{-\tau} \tilde{\beta}$ for some $\tilde{\beta} \in \Omega_B$. Then
	\begin{align*}
	\iint_{\mathbb{R}^{2d}} \|x-y\| p_\tau(x) p_\mu(y) dx dy & = \int_{\mathbb{R}^d} p_\mu(y) \int_{\mathbb{R}^d} \|x-y\| p_\tau(x) dx dy \\
	& \leq \int_{\mathbb{R}^d} p_\mu(y) \left[\int_{\mathbb{R}^d} \|x\| p_\tau(x) + \|y\| p_\tau(x) dx \right] dy \\
	& = \int_{\mathbb{R}^d} p_\mu(y) \left(\int_{\mathbb{R}^d} \|x\| p_\tau(x) dx + \|y\| \right) dy \\
	& = \int_{\mathbb{R}^d} p_\tau(x) \|x\| dx + \int_{\mathbb{R}^d} p_\mu(y) \|y\| dy.
	\end{align*}
	The second term on the RHS of the equation above is bounded. For what concerns the first term, we further notice that $p_\tau$ is the forward solution at time $t=\tau$, with initial condition $p_\mu$ at time $t=0$, of the autonomous Fokker Planck equation for the SDE~\eqref{SDE}
	\begin{equation*}
	\frac{\partial}{\partial t} p = \Delta V(x) p + \nabla V(x) \frac{\partial p}{\partial x} + \frac{1}{2}(\sigma^2 + \eta^2) \frac{\partial^2 p}{\partial x^2}.
	\end{equation*}
	Applying the results from Sections~\ref{sec:non_auto_FP_IVP} and \ref{sec:contraction_non_auto_FP} to the autonomous setting, we deduce this equation admits a unique attractor and, in particular, 
	\begin{equation*}
	\lim_{t \to \infty} p_t = p_\rho\ \ \text{in}\ \ L^1,
	\end{equation*}
	where $p_\rho$ denotes the density of the stationary measure. In fact, we observe that the fixed point $p_\rho$ is invariant under the autonomous evolution operator $\tilde{\Phi} = \Phi(\cdot,0)$, i.e.
	\begin{equation*}
	\tilde{\Phi} p_\rho = p_\rho.
	\end{equation*}
	Thanks to the discussion in Appendix~\ref{app:non_auto_FP_IVP_proof}, we deduce that $\tilde{\Phi}$ maps $L^1$ functions into the Schwartz space $\mathcal{S}$ of rapidly decreasing functions. Therefore, $p_t$ converges exponentially fast to $p_\rho$ as $t \to \infty$ in $\mathcal{S}$. Next, we consider
	\begin{align*}
	\int_{\mathbb{R}^d} p_\tau(x) \|x\| dx & = \int_{\mathbb{R}^d} (p_\tau(x) - p_\rho(x)) \|x\| dx + \int_{\mathbb{R}^d} p_\rho(x) \|x\| dx \\
	& \leq \int_{\mathbb{R}^d} |p_\tau(x) - p_\rho(x)| \|x\|dx + \int_{\mathbb{R}^d} p_\rho(x)\|x\| dx.
	\end{align*}
	For any $\epsilon>0$, there exists $T>0$ such that for any $\tau>T$,
	\begin{equation*}
	\int_{\mathbb{R}^d} |p_\tau(x) - p_\rho(x)| \|x\|dx < \epsilon
	\end{equation*}
	Let us define
	\begin{equation*}
	C := \sup_{\tau \in [0,T]} \int_{\mathbb{R}^d} p_\tau(x) \|x\| dx < \infty,
	\end{equation*}
	which is finite since $\int_{\mathbb{R}^d} p_\tau(x) \|x\| dx$ is finite for any $\tau$ and the supremum is taken over a finite time interval. Then,
	\begin{align*}
	\int_{\mathbb{R}^d} p_\tau(x) \|x\| dx & < \mathbbm{1}_{\tau \leq T} C + \mathbbm{1}_{\tau > T} \left(\epsilon +\int_{\mathbb{R}^d} p_\rho(x) \|x\| dx \right) \\
	& \leq \max \left\{C, \epsilon +\int_{\mathbb{R}^d} p_\rho(x) \|x\| dx \right\}.
	\end{align*}
	Consequently, there exists a constant $D<\infty$ such that for any $\tau>0$
	\begin{equation*}
	\mathbb{E}^{\mathbb{P}_\beta}[\mathcal{W}_f(\mu,\Psi(\tau,\theta_{-(t+\tau)}\beta) \mu)] < D,
	\end{equation*}
	hence $\mathcal{W}_f(\mu,\Psi(\tau,\theta_{-(t+\tau)}\beta) \mu)< D$ for $\mathbb{P}_B$-almost all $\beta \in \Omega_B$. This shows that
	\begin{equation*}
	\lim_{t \to \infty} \mathcal{W}_f\left(\Psi(t, \theta_{-t} \beta) \mu,\Psi(t+\tau, \theta_{-(t+\tau)} \beta) \mu \right) = 0
	\end{equation*}
	for all $\tau>0$ and $\mathbb{P}_B$-almost all $\beta \in \Omega_B$. Therefore, $(\Psi(t,\theta_{-t}\beta)\mu)_{t>0}$ is $\mathbb{P}_B$-almost surely a Cauchy sequence with respect to the $W_f$ metric. Let $p_{t,\beta}$ denote the Lebesgue density of $\Psi(t,\theta_{-t}\beta)\mu$. Thanks to Proposition~\ref{prop:convergence_Wf_L1},
	\begin{equation*}
	\lim_{t \to \infty} p_{t,\beta} = p_\beta \in L^1
	\end{equation*}
	for all $t>0$ and $\mathbb{P}_B$-almost all $\beta \in \Omega_B$. Finally, we remark that the limit point $p_\beta \in L^1$ is invariant under the pullback flow. We have
	\begin{equation*}
	p_\beta = \Phi(t,\beta) p_{\theta_{-t}\beta}
	\end{equation*}
	for all $t>0$. In light of the discussion in Appendix~\ref{app:non_auto_FP_IVP_proof}, $\Phi$ maps $L^1$ functions to $\mathcal{S}$ functions. In other words, $p_\beta \in \mathcal{S}$. We conclude that for $\mathbb{P}_B$-almost all $\beta \in \Omega_B$ and initial probability density $p \in L^1$, there exists a unique pullback attractor for~\eqref{eq:non autonomous FP gradient}
	\begin{equation*}
	p_\beta = \lim_{t \to \infty} \Phi(t,\theta_{-t} \beta) p \in \mathcal{S}.
	\end{equation*}
\end{proof}

\begin{proof}[Proof of Proposition~\ref{prop:cnpbversusdisintegration}]
	By construction, for every $C\in\mathscr{B}(\mathbb{R}^d)$, any Borel measure $\nu$ on $\mathbb{R}^d$ with Lebesgue density $p_\nu\in\mathcal{S}$ and for $\mathbb{P}_B$-almost all $\beta\in\Omega_B$, we have for all $s\leq t$
	\begin{equation}\label{eq:fun_Phi}
	\nu_t(C):=\int_{\Omega_W} \phi_*(t-s;\theta_s \omega, \theta_s \beta)\nu(C) \mathbb{P}_W(d\omega) = \int_C \Phi(t-s;\theta_s \beta) p_{\nu_s} dx=
	\int_Cp_{\nu_t}dx,
	\end{equation}
	and, using the results from proposition~\ref{prop:disintegration} and \eqref{eq:fun_Phi},
	\begin{equation*}
	\begin{split}
	\mu_\beta(C) &= \int_{\Omega_W} \mu_{\omega,\beta}(C)\mathbb{P}_W(d\omega)  
	= \int_{\Omega_W} \lim_{\tau \to \infty} \phi(\tau,\theta_{-\tau} \omega, \theta_{-\tau} \beta)_* \rho(C) \mathbb{P}_W(d\omega) \\
	& = \lim_{\tau \to \infty} \int_{\Omega_W} \phi(\tau,\theta_{-\tau} \omega, \theta_{-\tau} \beta)_* \rho(C)\mathbb{P}_W(d\omega) 
	= \lim_{\tau \to \infty} \int_C \Phi(\tau, \theta_{-\tau} \beta) p_\rho dx \\
	& = \int_C \lim_{\tau \to \infty} \Phi(\tau, \theta_{-\tau} \beta) p_\rho dx
	\end{split}
	\end{equation*}
	by which the result follows from the fact that $\mu_\beta(C)=\int_Cp_\beta dx$, for all $C\in\mathscr{B}(\mathbb{R}^d)$.
\end{proof}

\begin{proof}[Proof of Proposition~\ref{prop:erg}]
The Dirac measure $\delta_{p_\beta}$ is the disintegration of a Markov measure
of the random dynamical system $\Phi$ on $\Omega_B\times\mathcal{S}$, associated to the stochastic Fokker-Planck \eqref{stochasticFP}. The $\mathcal{S}$-marginal of this Markov measure
\begin{equation}\label{StatStochFP}
P:=\int_{\Omega_B} \delta_{p_\beta}\mathbb{P}_B(d\beta)
\end{equation}
is the corresponding stationary measure of \eqref{stochasticFP}.  
Application of Birkhoff's Ergodic Theorem then yields that time-averages of ($P$-integrable) observables $g:\mathcal{S}\to\mathbb{R}$ satisfy
\[
\lim_{\tau\to\infty}\frac{1}{\tau}\int_0^\tau g(\Phi(\tau,\beta) p) dt=\int_\mathcal{A}  g(p) P(dp)=\int_{\Omega_B} g(p_\beta)\mathbb{P}_B(d\beta),
\]
$\mathbb{P}_B\times P$-almost surely. By Elton's Ergodic Theorem  \cite{elton1987ergodic}, this relation holds in fact 
$\mathbb{P}_B$-almost surely if $g$ is continuous. 
\end{proof} 

\section{Exact solutions for the Ornstein-Uhlenbeck SDE with intrinsic and common additive noise} \label{app:explicit_disintegration}

In this appendix, we present the calculations of the closed expressions for common noise pullback attractors of 
the one-dimensional Ornstein-Uhlenbeck SDE with additive intrinsic and common noise, discussed in Section~\ref{subsubsec:OU}. The flow of the SDE~\eqref{linSDE} from time $s$ to $t$ for fixed noise realisations $\beta$ and $\omega$ is explicitly given by
\begin{equation*}
\phi(t-s,\theta_s \omega, \theta_s \beta) x(s) = x(s) e^{-a(t-s)} + \eta \int_s^t e^{-a(t-u)} d \beta(u) + \sigma \int_s^t e^{-a (t-u)} d \omega(u).
\end{equation*}
Averaging this equation over the intrinsic noise yields 
\begin{equation}\label{eq:average_intrinsic_noise}
\begin{split}
\int_{\Omega_W} \phi(t-s,\theta_s \omega, \theta_s \beta) x(s) \mathbb{P}_W(d \omega) & = x(s) e^{-a(t-s)} + \eta \int_s^t e^{-a(t-u)} d \beta(u) \\
\quad &  + \sigma \int_s^t e^{-a (t-u)} d W(u).
\end{split}
\end{equation}
It's important to emphasize that the integral with respect to the single path $\beta$ is a real number while the integral with respect to the intrinsic noise $W$ is a Gaussian distribution. The density of the distribution in~\eqref{eq:average_intrinsic_noise} is given by 
\begin{equation}
p(x,t) = \sqrt{\frac{a}{\pi \sigma^2 (1-e^{-2 a (t-s)})}} \exp \left( - \frac{a}{\sigma^2 (1-e^{-2 a (t-s)})} (x-m_\beta(t,s))^2 \right),
\label{trans_prob_p_beta}
\end{equation}
where
\begin{equation*}
m_\beta(t,s) := x(s) e^{-a(t-s)} + \eta \int_s^t e^{-a (t-u)} d \beta(u).
\end{equation*}
It is readily checked that indeed the density~\eqref{trans_prob_p_beta} is a solution of the stochastic Fokker-Planck equation ~\eqref{stochasticFP} with $V(x) = \frac{a}{2} x^2$.

Averaging \eqref{eq:average_intrinsic_noise} over the common noise yields
\begin{equation}
\begin{split}
\int_{\Omega_B} \int_{\Omega_W} \phi(t-s,\theta_s \omega,\theta_s \beta) x(s) \mathbb{P}_W(d \omega) \mathbb{P}_B(d \beta) & = x(s) e^{-a(t-s)} + \eta \int_s^t e^{-a(t-u)} d B(u) \\
\quad &  + \sigma \int_s^t e^{-a (t-u)} d W(u),
\end{split}\label{doubav}
\end{equation}
where now both integrals represent Gaussian distributions. The density of the distribution \eqref{doubav} is
\begin{equation}
\bar{p}(x,t) = \int_{\Omega_B} p(x,t) \mathbb{P}_B(d \beta)=
\sqrt{\frac{a}{\pi (\eta^2 + \sigma^2) (1-e^{-2 a (t-s)})}} \exp \left\{- \frac{a}{(\eta^2 + \sigma^2)(1-e^{-2 a(t-s)})} x^2 \right\},
\label{density_p_classic_FP}
\end{equation}
which in turn is a solution of the Fokker-Planck equation of the SDE \eqref{SDE} with $V(x) = \frac{a}{2} x^2$:
\begin{equation*}
\frac{\partial \bar{p}}{\partial t} = a \bar{p} + a x \frac{\partial \bar{p}}{\partial x} + \frac{1}{2} (\sigma^2 + \eta^2) \frac{\partial^2 \bar{p}}{\partial x^2}.
\end{equation*}
We conclude this section with a discussion on pullback attractors. The pullback attractor of the SDE~\eqref{linSDE} with respect to both intrinsic and common noise is
\begin{equation}
\alpha(\omega,\beta) := \lim_{s \to -\infty} \phi(t-s,\theta_s \omega, \theta_s \beta) x(s) = \eta \int_{-\infty}^0 e^{au} d \beta(u) + \sigma \int_{-\infty}^0 e^{a u} d \omega(u).\label{FP}
\end{equation}
This is a point attractor, confirming that, at the SDE level, the system is synchronizing. The fiberwise measures resulting from disintegration (see Section~\ref{sec:from_nonauto_to_stoch}) are therefore
\begin{equation*}
\mu_{\omega,\beta} = \delta_{\alpha(\omega,\beta)}.
\end{equation*}
Integrating with respect to the intrinsic noise yields 
\begin{equation*}
\mu_\beta := \int_{\Omega_W} \mu_{\omega,\beta} \mathbb{P}_W(d \omega) = \eta \int_{-\infty}^0 e^{au} d \beta(u) + \sigma \int_{-\infty}^0 e^{a u} d W(u).
\end{equation*}
This is normally distributed with variance depending on the intensity of the intrinsic noise $\sigma$ and mean depending on the intensity of the common noise $\eta$. Its density is
\begin{equation}
p_\beta(x) = \sqrt{\frac{a}{\pi \sigma^2}} \exp \left\{-\frac{a}{\sigma^2} \left(x-\eta \int_{-\infty}^0 e^{au} d \beta(u) \right)^2 \right\}.
\label{density_pullback}
\end{equation}
Finally, integrating over all common noise realizations we obtain the stationary measure
\begin{equation*}
\begin{split}
\rho & = \int_{\Omega_B} \mu_\beta \mathbb{P}_B(d \beta) = \int_{\Omega_B} \int_{\Omega_W} \mu_{\omega,\beta} \mathbb{P}_W(d \omega) \mathbb{P}_B(d \beta) \\
\quad & = \eta \int_{-\infty}^0 e^{au} d B(u) + \sigma \int_{-\infty}^0 e^{a u} d W(u),
\end{split}
\end{equation*}
with density
\begin{equation*}
p_\rho(x) = \sqrt{\frac{a}{\pi (\sigma^2 + \eta^2)}} \exp \left\{-\frac{a}{(\eta^2 + \sigma^2)} x^2 \right\}.
\end{equation*}
Of course, we also have $p_\rho(x)=\int_{\Omega_B} p_\beta(x) \mathbb{P}_B(d\beta)$ and
$p_\rho(x)=\lim_{(t-s)\to\infty}\bar{p}(x,t)$ confirming global convergence of solutions of \eqref{FP} to the stationary measure
in forward and pullback sense.

\section*{Acknowledgements}\label{sec:acknow}
We are grateful to Sergey Zelik for extended discussions and input on the material in Section~\ref{subsec:exist_uniq_reg} and Appendix~\ref{app:non_auto_FP_IVP_proof} and to Martin Hairer for pointing out \cite{eberle2016reflection}. FG has been supported by the EPSRC Centre for Doctoral Training in Financial Computing and Analytics.  JSWL acknowledges support by the UK Royal Society (NAF\textbackslash{R}1\textbackslash180236), the EU H2020 ITN CRITICS (643073) and the London Mathematical Laboratory (LML) through its External Fellowship scheme.


\bibliography{references}
\bibliographystyle{siam}

\end{document}